\def\ps@pprintTitle{%
	\let\@oddhead\@empty
	\let\@evenhead\@empty
	\def\@oddfoot{\footnotesize\itshape
		{} \hfill}%
	\let\@evenfoot\@oddfoot
}
\newtheorem{theor}{Theorem}
\newtheorem{prop}[theor]{Proposition}
\newtheorem{lemma}[theor]{Lemma}
\newtheorem{cor}[theor]{Corollary}
\theoremstyle{definition}               %stile roman 
\newtheorem{defin}[theor]{Definition}
\newtheorem{ex}{Example}
\newtheorem{rem}[theor]{Remark}
\newtheorem{que}{Question}
\DeclareMathOperator{\Sym}{Sym}
\DeclareMathOperator{\Aut}{Aut}
\DeclareMathOperator{\id}{id}
\DeclareMathOperator{\Ker}{Ker}
\DeclareMathOperator{\im}{Im}
\DeclareMathOperator{\Soc}{Soc}
\DeclareMathOperator{\Zoc}{Zoc}
\DeclareMathOperator{\gr}{gr}
\DeclareMathOperator{\Z}{Z}
\DeclareMathOperator{\Ann}{Ann}
\begin{document}

%\begin{center}
%	\Large{Semi-braces and nilpotency}
%\end{center}

%%%%%%%%%%%%%%%%%%%%%%%%%%%%%%%%%%%%%%%%%%%%%%%%%%%%%%%%%%%%%%%%%%%
\begin{frontmatter}
	\title{Nilpotency in left semi-braces
		%\tnoteref{mytitlenote}
	}
	%\tnotetext[mytitlenote]{{
		%This work was partially supported by the Dipartimento di Matematica e Fisica ``Ennio De Giorgi" - Universit\`{a} del Salento. 
		%The first and the third author are members of GNSAGA (INdAM). The second author was partially supported by grants MINECO-FEDER MTM2017-83487-P and AGAUR 2017SGR1725 (Spain).}}
	\author [unile] {Francesco~CATINO}
	\ead{francesco.catino@unisalento.it}
	\author [unibar] {Ferran~CED\'{O}}
	\ead{cedo@mat.uab.cat}
	\author [unile] {Paola~STEFANELLI}
	\ead{paola.stefanelli@unisalento.it}
	\address[unile]{Dipartimento di Matematica e Fisica ``Ennio De Giorgi"
	\\
	Università del Salento\\
	Via Provinciale Lecce-Arnesano \\
	73100 Lecce\\
	Italy
	\vspace{3mm}
	
	}\address[unibar]{Departament de Matem\`{a}tiques
	\\
	Universitat Aut\`{o}noma de Barcelona\\
	08193 Bellaterra (Barcelona) \\
	Spain\\
}
	
\begin{abstract}
	We introduce left and right series of left semi-braces. This allows to define left and right nilpotent left semi-braces. We study the structure of such semi-braces and generalize some results, known for skew left braces, to left semi-braces. We study the structure of left semi-braces $B$ such that the set of additive idempotents $E$ is an ideal of $B$. Finally we introduce the concept of a nilpotent left semi-brace and we show that the multiplicative group of such semi-braces is nilpotent.   
\end{abstract} 
	
		\begin{keyword}
			quantum Yang-Baxter equation \sep set-theoretical solution\sep brace\sep semi-brace\sep skew brace 
			\MSC[2020] 16T25 \sep 81R50 \sep 16Y99 %\sep 16N20
		\end{keyword}
\end{frontmatter}

%%%%%%%%%%%%%%%%%%%%%%%%%%%%%%%%%%%%%%%%%%%%%%%%%%%%%%%%%%%%%%%%%%%
\medskip

\section*{Introduction}

The quantum Yang-Baxter equation is one of the basic equations of statistical mechanics that takes its name from independent work of C.N. Yang \cite{Ya67} and R.J. Baxter \cite{Ba72}. Outside to a strictly physical perspective, this equation laid to the foundations of the theory of quantum groups and to the development of Hopf algebras theory, see \cite{Ka95}.\\
Determining and classifying all the solutions of the Yang-Baxter equation is still an open question. To attack this wide problem, Drinfel'd \cite{Dr92} suggested to study the special class of \emph{set-theoretical solutions}, namely, maps $r:X\times X\to X\times X$, with $X$ a set, satisfying the braid relation 
\begin{align*}
\left(r\times \id_X\right)
\left(\id_X\times r\right)
\left(r\times \id_X\right)
=
\left(\id_X\times r\right)\left(r\times \id_X\right)
\left(\id_X\times r\right).
\end{align*}
The papers of Etingof, Schedler and Soloviev \cite{ESS99}, Gateva-Ivanova and Van den Bergh \cite{GaVa98}, Lu, Yan and Zhu \cite{LuYZ00}, and Soloviev \cite{So00}, were the first to trace the path for studying set-theoretical solutions mainly in group theory terms and also by introducing some algebraic structures. In particular, they focused on bijective solutions which are non-degenerate. We recall that, writing $r\left(x,y\right)=\left(\lambda_x\left(y\right), \rho_y\left(x\right)\right)$, for all $x,y\in X$, with $\lambda_x,\rho_y$ maps from $X$ into itself, $r$ is said to be  \emph{left non-degenerate} if $\lambda_x\in\Sym_X$, for every $x\in X$ and, similarly, \emph{right non-degenerate} if  $\rho_y\in\Sym_X$, for every $y\in X$. Moreover, $r$ is \emph{non-degenerate} if it is both left and right non-degenerate.  

In 2007, a surprising connection between rings and solutions has been found by Rump \cite{Ru07}. More precisely, every Jacobson radical ring determines a non-degenerate set-theoretic solution that is also involutive, i.e., $r^2=\id_{X\times X}$. Motivated by this special link, Rump \cite{Ru07} introduced \emph{braces}, a generalization of Jacobson radical rings, for determining  and investigating involutive non-degenerate solutions.
We highlight that some structural properties of braces have been studied exactly to understand the behaviour of special solutions. In particular, the notion of left and right nilpotency have been introduced by Rump \cite{Ru07} in the context of braces to study multipermutation solutions in relation to Gateva–Ivanova’s strong conjecture \cite[2.28(I)]{Ga04}. In this context, important results have been provided by Smoktunowicz \cite{Sm18, Sm18-2}. In the subsequent years, the brace theory has grown considerably, as one can see in several papers, such as \cite{CeJeOk14, CCoSt16, BaCeJeOk19, BaCeJeOk18, MeBaEs19, La20, CeJeOk21}.
For a fuller treatment we refer the reader to \cite{Ce18} along the references therein.

Later, suitable generalizations of braces turned out to be useful tools for obtaining new kind of solutions. 
In particular, Guarnieri and Vendramin \cite{GuVe17}, introduced skew braces to attack the problem of finding bijective non-degenerate solutions, non necessarily involutive. Moreover, as shown by  Smoktunowicz and Vendramin \cite[Theorem 4.13]{SmVe18} any finite skew brace $B$ produces a solution $r$ satisfying $r^{2n}=\id_{B\times B}$, for a certain positive integer $n$.  Recent advances concerning skew braces can be found in \cite{CCoSt19, Ba18, BaNeYa20, AcBo20, AcLuVe20, JeKuVAVe19, Ze19, Cr21, DeC19, JeKuVaVe20x}.\\
In addition, Catino, Colazzo, and Stefanelli \cite{CaCoSt17} introduced (left) semi-braces to study left non-degenerate solutions. A triple $\left(B,+,\circ\right)$ is a left semi-brace if $\left(B,+\right)$ is a left cancellative semigroup (not necessarily abelian), $\left(B, \circ\right)$ is a group, and the relation
\begin{align*}
	a\circ\left(b+c\right)
	= a\circ b + a\circ \left(a^- + c\right)
\end{align*}
is satisfied, for all $a,b,c\in B$, where $a^-$ denotes the inverse of $a$ in $\left(B, \circ\right)$. Moreover, if $B$ is a left semi-brace, then the map $r:B\times B\to B\times B$ defined by
\begin{align*}
r\left(a,b\right)
= \left(a\circ \left(a^- + b\right), \left(a^- + b\right)^-\circ b\right),
\end{align*}
for all $a,b\in B$, is a left non-degenerate solution which is not bijective in general. 
Moreover, the restriction $s:= r_{|_{E\times E}}$ of \ $r$ to the set of idempotents $E$ of $\left(B, +\right)$ is an idempotent solution, i.e., $s^2 = s$.
In 2019, Jespers and Van Antwerpen \cite{JeAr19} introduced a slight generalization of left semi-braces and proved that, under mild assumptions, other degenerate solutions can be obtained, some of which are such that $r^3 = r$. 
As shown by Catino, Colazzo, and Stefanelli \cite[Corollary 13]{CaCoSt20-2}, particular finite left semi-braces $B$, including that in \cite{JeAr19}, determine solutions $r$ for which $r^n = r$, where $n$ is an integer closely linked with the structure $B$.
We mention that other generalizations of the brace structure and their connection with solutions are lately investigated in \cite{CCoSt20x-2} and \cite{CMaSt20x-2}.
\smallskip

\noindent\textbf{Convention.} Hereinafter, for a left semi-brace we mean the structure introduced in \cite{CaCoSt17}, named left cancellative left semi-brace in \cite{JeAr19}.
\smallskip

The aim of this work is to explore more deeply some structural  aspects of the semi-brace. Specifically, we pay particular attention how left and right nilpotency - concepts suitable translated from the skew brace theory - influence the whole semi-brace structure.
The paper of Ced\'{o}, Smoktunowicz, and Vendramin \cite{CeSmVe19} mainly inspired the direction of our research.
Initially, in a left semi-brace $B$ we introduce the new operation $\cdot$ defined as
\begin{align*}
a\cdot b := \lambda_{a}\left(a^-\right) + a\circ b+\lambda_{b}\left(b^-\right),
\end{align*}
for $a,b\in B$, where $\lambda_a\left(a^-\right) = a\circ\left(a^- + a^-\right)$.
Even if at first sight it could appear an  %``strange'' or 
``unusual'' operation,  one can check that, in the case of a skew brace it becomes 
\begin{align*}
a\cdot b := - a + a\circ b - b
\end{align*}
which coincides with that introduced in \cite{CeSmVe19} and \cite{KoSmVe18}. 
Moreover, in the case of a Jacobson radical ring, the operation $\cdot $ is the usual multiplication of the ring itself. 
Although this operation is weaker with respect to rings - for instance, in general, it is not associative in a left semi-brace - we translate some notions of skew brace theory into the context of semi-braces.

As a first step, we focus on reformulating the definition of the ideal of a left semi-brace to makes it more usable. In particular, the proof of our characterization of ideals of a left semi-brace makes use of the operation $\cdot$.
Then, to investigate right nilpotent left semi-braces, we introduce the notion of \emph{right series} of a left semi-brace $B$
$$
B^{\left(1\right)} \supseteq B^{\left(2\right)} \supseteq \cdots \supseteq B^{\left(n\right)}\cdots
$$
by setting $B^{\left(1\right)}:= B$ and, for every positive integer $n$,  $B^{\left(n + 1\right)} = B^{\left(n\right)}\cdot B + E$, where  $B^{\left(n\right)}\cdot B$ denotes the subgroup of $\left(G, +\right)$  generated by elements of the form $x\cdot b$, with $x\in B^{\left(n\right)}$ and $b\in B$, where $G:= B+0$ is the well-known skew brace contained in $B$. Then, $B$ is said to be \emph{right nilpotent} if there exists a positive integer $n$ such that $B^{\left(n\right)} = E$.
Similarly, to study left nilpotent left semi-braces, we define the \emph{left series} of $B$ 
$$
B^{1} \supseteq B^{2} \supseteq \cdots \supseteq B^{n}\cdots 
$$
by setting $B^{1}:= B$ and, for every positive integer $n$, $B^{n + 1} = B\cdot B^{n} + E$. Thus, $B$ is said to be \emph{left nilpotent} if there exists a positive integer $n$ such that $B^{n} = E$.
One can observe that if $B$ is a skew brace then the previous definitions coincides with those provided in \cite{CeSmVe19}.  Furthermore, we demonstrate that the sets $B^{\left(n\right)}$ and $B^{n}$ are respectively ideals and left ideals of $B$, coherently to what proved in \cite{Ru07} for left braces and in \cite{CeSmVe19} for skew left braces. 

To study right nilpotency in the left semi-brace structure, it is significant to investigate in which cases the set of idempotents $E$ is an ideal of a left semi-brace $B$.  Indeed, it is clear that any right nilpotent left semi-brace $E$ as an ideal.  Note that, in general, $E$ is a left ideal and that $E$ is an ideal if and only if $E$ is a normal subgroup of the multiplicative group $\left(B,\circ\right)$. In light of this fact, we provide some useful characterizations of the left semi-brace $B$  such that $E$ is an ideal of $B$, mainly in terms of the operation $\cdot$. We obtain also that, given a left semi-brace $B$, then $E$ is an ideal of $B$ if and only if $B = \im\varphi\circ\ker\varphi$, where $\varphi$ is an idempotent endomorphism of $(B,\circ)$ which coincides with the map $\rho_0$ given by $\rho_0\left(b\right)= \left(b^-+0\right)^-$.
Considering left semi-braces having $E$ as an ideal, we introduce the notion of \emph{generalized socle} of $B$, i.e., $\Zoc\left(B\right):= \Soc\left(B\right) + E$, where $\Soc\left(B\right)$ is the socle of $B$, see \cite[Definition 21]{CaCoSt17}. 
We prove that $\Zoc\left(B\right)$ is an ideal of $B$ and that the \emph{generalized socle series}, i.e., the series given by $\Zoc_1\left(B\right):= \Zoc\left(B\right)$ and 
$\Zoc_n\left(B\right):= \Soc_n\left(B\right) + E$, for every positive integer $n$, is a special $z$-series, a generalization of the $s$-series already known for skew-braces, see \cite[Definition 2.11]{CeSmVe19}. 
In particular, we provide some characterizations of right nilpotent left semi-braces $B$ assuming that the additive group of $G=B+0$ is a nilpotent group. Specifically, if $\left(G,+\right)$ is a nilpotent group, then $B$ is right nilpotent if and only if $B$ admits a $z$-series. Moreover, if $E$ is an ideal of $B$, then $B$ admits a $z$-series if and only if $B = \Zoc_n\left(B\right)$, for a certain positive integer $n$.
Regarding the left nilpotency, the main result that we prove is that, given a left semi-brace with $\left(G,+\right)$ a nilpotent group and $\left(E, \circ\right)$ a nilpotent group contained in the centralizer of $G$ in $\left(B, \circ\right)$, then $B$ is left nilpotent if and only if $\left(B,\circ\right)$ is a nilpotent group. 

Following \cite{CeSmVe19} and \cite{Sm18}, we suitable define the sequence 
$$
B^{[1]} \supseteq B^{[2]} \supseteq \cdots \supseteq B^{[n]}\cdots 
$$
where $B^{[1]} := B$ and, for every positive integer $n$, $B^{[n]}$ is s the additive subgroup of $B$ generated by the elements belonging to the sets 
$B^{[i]}\cdot B^{[n+1-i]}$, with $1\leq i\leq n$. 
Similarly for skew braces, one can check that such sets $B^{[n]}$ are left ideals of $B$ and also some relations of the last series with left and right series of $B$.

Finally, following \cite{CCoSt19} we introduce the concept of annihilator of a left semi-brace. This allows to define nilpotent left semi-brace and we show that the multiplicative group of a such semi-brace is nilpotent. We also show that the nilpotent left semi-braces $B$ such that the set of idempotents $E$ is an ideal of $B$ are right nilpotent. If moreover $G=B+0$ is finite and $(G,+)$ is nilpotent, then $B$ is left nilpotent.  

\bigskip

\section{Basic results}
In this section, we recall some basic definitions and properties of left semi-braces. Moreover, we introduce a new operation $\cdot$ already provided in the context of skew braces in \cite{CeSmVe19} and \cite{KoSmVe18}. 
\medskip

\begin{defin}[Definition 1, \cite{CaCoSt17}]
	A set $B$ with two operations $+$ and $\circ$ is said to be a \emph{left semi-brace} if $\left(B, +\right)$ is a left cancellative semigroup,  $\left(B, \circ\right)$ is a group and
\begin{align*}
	a\circ\left(b + c\right) = a\circ b + a\circ \left(a^- + c\right)
\end{align*}
holds, for all $a,b,c\in B$, where $a^-$ is the inverse of $a$ with respect to $\circ$.
\end{defin}
\noindent As usual, we assume that the multiplication $\circ$ has higher precedence than the addition and denote by $0$ the identity of the group $\left(B,\circ\right)$.
In particular, it holds that $0$ is a left identity of the semigroup $\left(B,+\right)$ since $0$ is  an idempotent with respect to the sum.
\medskip

We recall that the additive structure of any left semi-brace $B$ is a right group and 
$B = G + E$, where $E$ is the set of idempotents of $\left(B,+\right)$ and $G$ is the subgroup $B +0$ of $(B,+)$.
In particular, we have that $G$ is a skew left brace  and $E$ is a left semi-brace such that $\left(E, +\right)$ is a right zero semigroup.
Note that any group $\left(B,\circ\right)$ determines a left semi-brace by setting $a + b = b$, for all $a,b\in B$. Such a semi-brace is called the \emph{trivial left semi-brace} on the group $\left(B, \circ\right)$ and it is clear that in this case $G=\{0\}$ and $B = E$.
Moreover, every element $b\in B$ can be written in a unique way as
\begin{align*}
b =g_b + e_b,
\end{align*} 
with $g_b\in G$ and $e_b\in E$; we call $g_b$ the \emph{group part} of $b$ and $e_b$ the \emph{idempotent part} of $b$.

\medskip

As in \cite[Proposition 3]{CaCoSt17} and \cite[Proposition 6]{CaCoSt17}, let us define the maps 
\begin{align*}
\lambda_a:B\to B, &\, b\mapsto a\circ\left(a^- + b\right),\\
 \rho_b:B\to B, &\, a\mapsto \left(a^- + b\right)^-\circ b,
\end{align*}
for all $a,b\in B$. In particular, we have that the map 
$\lambda: B\to \Aut(B,+), \,a\mapsto\lambda_a$ is a homomorphism from the group $(B, \circ)$ to the group of the automorphisms of $\left(B,+\right)$.
We recall that if $g\in G$ and $b\in B$, by \cite[Proposition 5]{CaCoSt17}, it holds 
$\lambda_g\left(b\right) = - g + g\circ b$, that is exactly the definition introduced in the context of skew left braces.
Moreover, the map 
$\rho: B\to B^B, \,b\mapsto\rho_b$ is a semigroup anti-homomorphism
from $(B, \circ)$  into the monoid of the maps from $B$ to itself.

\medskip

The maps $\lambda_a$ and $\rho_b$ satisfy particular properties with respect to the elements in $G$ and those in $E$ which are mainly collected in \cite[Proposition 7]{CaCoSt17}. We recall some of them in the proposition below. 
\begin{prop}\label{prop:Proposition7}
    Let $B$ be a left semi-brace.
    Then the following properties hold:
    \begin{enumerate}
        \item $\lambda_b\left(E\right) = E$, for every $b\in B$;
        \item $\rho_a\left(b\right)\in G$, for  all $a,b\in B$;
        \item $b\in G$ if and only if $\lambda_b\left(0\right)=0$;
        \item $b\in E$ if and only if $\rho_c\left(b^-\right) = 0$
        , for every $c\in B$.
    \end{enumerate}
\end{prop}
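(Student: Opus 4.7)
My plan is to prove the four items by combining the defining semi-brace axiom with the right-group structure of $(B,+)$ recalled just above the proposition---in particular, the decomposition $b=g_b+e_b$ with $g_b\in G$ and $e_b\in E$, the fact that every idempotent is a left identity for $(B,+)$ (so $e+c=c$ for all $e\in E$, $c\in B$), and the identity $b+e_b=b$ for every $b\in B$, which follows from $e_b+e_b=e_b$ applied to the decomposition of $b$.

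Items (1), (3) and (4) are routine. For (1), the relation $e+e=e$ is preserved by the additive homomorphism $\lambda_b$, so $\lambda_b(E)\subseteq E$; applied to $\lambda_{b^-}=\lambda_b^{-1}$ this yields equality. For (3), if $b\in G$ then $b^-\in G$ (since $(G,\circ)$ is a subgroup), so $b^-+0=b^-$ and $\lambda_b(0)=b\circ(b^-+0)=b\circ b^-=0$; conversely, $\lambda_b(0)=0$ together with left cancellation of $b$ in $(B,\circ)$ gives $b^-+0=b^-$, so $b\in G$. For (4), if $b\in E$ then $b+c=c$ for every $c\in B$, so $\rho_c(b^-)=(b+c)^-\circ c=c^-\circ c=0$; conversely, $\rho_c(b^-)=0$ for every $c$ forces $b+c=c$ for all $c$, and the specialization $c=b$ gives $b\in E$.

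The main obstacle is (2), since $\rho_a(b)=(b^-+a)^-\circ a$ is not visibly an element of $G$. My plan is to apply the semi-brace axiom with $x=(b^-+a)^-$, $y=a$, $z=e_a$, exploiting two simultaneous collapses. On the one hand, $a+e_a=a$ reduces the left-hand side $x\circ(a+e_a)$ to $(b^-+a)^-\circ a=\rho_a(b)$. On the other hand, writing $b^-+a=g_{b^-}+(e_{b^-}+a)=g_{b^-}+a=(g_{b^-}+g_a)+e_a$ and invoking uniqueness of the decomposition $B=G+E$ yields $e_{b^-+a}=e_a$, whence $(b^-+a)+e_a=b^-+a$ and the second summand on the right-hand side of the axiom collapses to $x\circ x^-=0$. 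The axiom therefore reads $\rho_a(b)=\rho_a(b)+0$, giving $\rho_a(b)\in G$.
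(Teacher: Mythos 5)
Your proof is correct. Note that the paper does not actually prove this proposition: it is recalled verbatim from Proposition~7 of the cited reference \cite{CaCoSt17}, so there is no in-paper argument to compare against. Judged on its own, your argument is sound and uses only facts the paper itself recalls beforehand ($\lambda_a\in\Aut(B,+)$, the unique decomposition $b=g_b+e_b$ with $G=B+0$ a subgroup, and the fact that additive idempotents are left identities by left cancellativity). Items (1), (3), (4) are handled exactly as one would expect. Your treatment of (2) is the genuinely nontrivial part and it works: the computation $b^-+a=g_{b^-}+g_a+e_a$ correctly identifies $e_{b^-+a}=e_a$ by uniqueness of the decomposition, so the second summand in the semi-brace axiom applied to $x\circ(a+e_a)$ with $x=(b^-+a)^-$ collapses to $x\circ x^-=0$, yielding $\rho_a(b)=\rho_a(b)+0\in B+0=G$. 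The only step you leave slightly implicit is in (3), where $b^-+0=b^-$ gives $b^-\in G$ and you then need closure of $(G,\circ)$ under inversion to conclude $b\in G$; this follows from the fact, already invoked, that $G$ is a sub-skew-brace, so it is not a gap.
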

Let us note that, for every element $b\in B$, one can write the group part and the idempotent one of $b$ respectively as
\begin{align*}
    g_b = b + 0 = \rho_{0}\left(b^-\right)^-
    \quad\text{and}\quad
    e_b = - g_b + b =\lambda_b\left(0\right).
\end{align*}

Note that, by $1.$ in %\cite[Proposition 7]{CaCoSt17}, 
\cref{prop:Proposition7}, it holds
\begin{align*}
b =g_b + e_b = g_b\circ\lambda_{g_b^-}\left(e_b\right)\in G\circ E, 
\end{align*} 
for every $b\in B$. 
Effectively, $\left(B, \circ\right)$ is the matched product of $\left(G, \circ\right)$ and $\left(E, \circ\right)$, thus any element $b$ in $B$ can be written in a unique way as $b = g\circ e$ (or $b=e\circ g$), with $g\in G$ and $e\in E$.  For more details we refer the reader to \cite[Theorem 10]{CCoSt20}.\\
Furthermore, observe that any product $a\circ b$ can be written as
\begin{align*}
a\circ b 
= a\circ\left(0 + b\right)
= a + \lambda_a\left(b\right),
\end{align*}
for all $a,b\in B$.
\bigskip

Now, let us introduce the operation $\cdot$ for a left semi-brace.
\begin{defin}
	Let $B$ be a left semi-brace. We define the following operation $\cdot$ on $B$, 
	\begin{align}\label{def:puntino}
	a\cdot b:= \lambda_{a}\left(a^-\right) + a\circ b + \lambda_{b}\left(b^-\right),
	\end{align}
	for all $a,b\in B$.
\end{defin}
\medskip

\noindent Let us observe that 
if $a,b\in B$, then $a\circ b = g_a + e_a + \lambda_a\left(b\right) = g_a + \lambda_a\left(b\right)$, hence we get
\begin{align}\label{def:lambda-def2}
    \lambda_a\left(b\right) = - g_a + a\circ b,
\end{align} 
for all $a,b\in B$.
This implies that $\lambda_{a}\left(a^-\right) = -g_a + 0 = -g_a$, for every $a\in B$, thus one can write the operation $\cdot$ as
\begin{align}\label{def:puntino-1}
    a\cdot b
    = -g_a + a\circ b - g_b,
\end{align}
for all $a,b\in B$. In addition, 
\begin{align}\label{def:puntino-2}
a\cdot b= \lambda_{a}\left(b\right) + \lambda_{b}\left(b^-\right),
\end{align}
for all $a,b\in B$. 
Indeed, by  \eqref{def:lambda-def2} we have that
\begin{align*}
\lambda_{a}\left(a^-\right) + a\circ b  
= -g_a + a\circ a^- + a\circ b
= -g_a + a\circ b
= \lambda_{a}\left(b\right).
\end{align*}
Moreover, if $B$ is a skew left brace, then $\lambda_{a}\left(a^-\right) = - a $, hence
$a\cdot b = -a + a\circ b -b$,
i.e., the operation \eqref{def:puntino} coincides with the operation defined in \cite{CeSmVe19} and \cite{KoSmVe18}.
\bigskip

\noindent Note that if $B$ is a left semi-brace and $a\in B$, then
\begin{align*}
a\cdot 0 = 0
\quad \text{and}\quad
0\cdot a = 0.
\end{align*}
Indeed, by \cref{prop:Proposition7}-$1.$,
we have that $\lambda_a\left(0\right)\in E$, and so, by \eqref{def:puntino-2}, $a\cdot 0 = \lambda_a\left(0\right) + 
\lambda_{0}\left(0^-\right)
= \lambda_{0}\left(0^-\right)
 = 0$. Moreover, 
$0\cdot a = \lambda_0\left(a\right) + \lambda_a\left(a^-\right)
= a + \lambda_a\left(a^-\right)
= a\circ a^- = 0$.
\bigskip

\noindent Furthermore, the operation $\cdot$ features the following behaviour with respect to the sum and the multiplication.
\begin{prop}\label{prop:cdot-circ}
Let $B$ be a left semi-brace.  Then, the following properties are satisfied:
\begin{enumerate}
\item $a\cdot \left(b + c\right) = a\cdot b + b + a\cdot c +\lambda_{b}(b^-)$,
\item $\left(a\circ b\right)\cdot c = a\cdot\left(b\cdot c\right) + b\cdot c + a\cdot c$,
\end{enumerate}
for all $a,b,c\in B$.
\begin{proof}
Firstly, observe that, by \cref{prop:Proposition7}-$1.$,
$\lambda_a\left(a^-\right) + a = \lambda_a\left(0\right)\in E$,
for every $a\in B$.
\begin{enumerate}
	\item Let $a, b, c\in B$. We have that 
	\begin{align*}
	    a\cdot &\left(b + c\right) 
	    = -g_a + a\circ\left(b+c\right) - g_{b+c}&\mbox{(by \eqref{def:puntino-1})}\\
	    &= -g_a + a\circ b + \lambda_a\left(c\right) - g_c -g_b\\
	    &= -g_a + a\circ b - g_b + b 
	    - g_a + a\circ c - g_c -g_b
	    &\mbox{(since $-g_b + b = e_b\in E$)}\\
	    &=a\cdot b + b + a\cdot c + \lambda_b\left(b^-\right).
	\end{align*}
	
	\item Let $a,b,c\in B$. We have that
	\begin{align*}
		a\cdot\left(b\cdot c\right) + b\cdot c + a\cdot c
		&= \lambda_a\left(b\cdot c\right) + \lambda_{b\cdot c}(\left(b\cdot c\right)^-) + b\cdot c + 
		\lambda_a\left(c\right) + \lambda_c\left(c^-\right)\\
		&=\lambda_a\left(b\cdot c\right) +
		\lambda_a\left(c\right) + \lambda_c\left(c^-\right)\\
		&= \lambda_a\left(\lambda_b\left(c\right) + \lambda_c\left(c^-\right)\right) + \lambda_a\left(c\right) + \lambda_c\left(c^-\right)\\
		&= \lambda_a\lambda_b\left(c\right) + \lambda_a\lambda_c\left(c^-\right) + \lambda_a\left(c\right) + \lambda_c\left(c^-\right)\\
		&= \lambda_{a\circ b}\left(c\right) + \lambda_a\left(\lambda_c\left(c^-\right) + c\right) +  \lambda_c\left(c^-\right)\\
		&= \lambda_{a\circ b}\left(c\right) +  \lambda_c\left(c^-\right)\\
		&= \left(a\circ b\right)\cdot c.
	\end{align*}
\end{enumerate}
\end{proof}
\end{prop}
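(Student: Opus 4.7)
My plan is to exploit the two equivalent formulas for the new operation, namely $a\cdot b = -g_a + a\circ b - g_b$ from \eqref{def:puntino-1} and $a\cdot b = \lambda_a(b) + \lambda_b(b^-)$ from \eqref{def:puntino-2}, using the first for part~1 and the second for part~2. The ingredients I will lean on are the identity $\lambda_a(a^-) = -g_a$ noted after \eqref{def:puntino-1}, the absorbing behaviour $e + x = x$ for every $e\in E$ and $x\in B$ (a direct consequence of the right-group structure of $(B,+)$: writing $b = g_b + e_b$ and using that $E$ is a right-zero semigroup one sees that every idempotent acts as a left identity), and property~1 of \cref{prop:Proposition7}, namely $\lambda_a(E) = E$.

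For part~1, I would rewrite $a\cdot(b+c) = -g_a + a\circ(b+c) - g_{b+c}$ and apply the defining axiom of a left semi-brace to get $a\circ(b+c) = a\circ b + \lambda_a(c)$. A small separate computation gives $g_{b+c} = g_b + g_c$: expand $b+c = g_b + e_b + g_c + e_c$, absorb $e_b$ against $g_c$ to get $b+c = g_b + g_c + e_c$, then take $+\,0$ and use $e_c + 0 = 0$. So $-g_{b+c} = -g_c - g_b$, turning the left-hand side into $-g_a + a\circ b + \lambda_a(c) - g_c - g_b$. Expanding the right-hand side with the same formula and collapsing the middle block via $-g_b + b = e_b$ and then $e_b + (-g_a) = -g_a$, together with $\lambda_b(b^-) = -g_b$, yields the same expression.

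For part~2, I would use the $\lambda$-based formula so that everything can be pushed through the additive automorphism $\lambda_a$. Expanding the right-hand side gives
\begin{equation*}
\lambda_a(b\cdot c) + \lambda_{b\cdot c}\!\left((b\cdot c)^-\right) + (b\cdot c) + \lambda_a(c) + \lambda_c(c^-),
\end{equation*}
and the middle pair $\lambda_{b\cdot c}((b\cdot c)^-) + (b\cdot c) = -g_{b\cdot c} + (b\cdot c) = e_{b\cdot c}$ is an idempotent, hence absorbed by the following $\lambda_a(c)$. Substituting $b\cdot c = \lambda_b(c) + \lambda_c(c^-)$ and distributing $\lambda_a$ produces $\lambda_a\lambda_b(c) + \lambda_a\lambda_c(c^-) + \lambda_a(c) + \lambda_c(c^-)$; the homomorphism property $\lambda_a\lambda_b = \lambda_{a\circ b}$ rewrites the first summand, and additivity of $\lambda_a$ together with $\lambda_a(E)=E$ turns $\lambda_a\lambda_c(c^-) + \lambda_a(c) = \lambda_a(\lambda_c(c^-) + c) = \lambda_a(e_c)$ into a further idempotent, again absorbed by $\lambda_c(c^-)$. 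The residue is $\lambda_{a\circ b}(c) + \lambda_c(c^-) = (a\circ b)\cdot c$. The chief nuisance to manage is the bookkeeping in the non-abelian semigroup $(B,+)$: every step requires knowing whether a fragment lies in $G$ (where additive inverses exist) or in $E$ (where the absorbing rule applies), and one must resist commuting elements that do not commute.
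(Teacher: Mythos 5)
Your proposal is correct and follows essentially the same route as the paper: part~1 via the formula $a\cdot b=-g_a+a\circ b-g_b$ together with the semi-brace axiom and the absorption $e_b+x=x$, and part~2 via $a\cdot b=\lambda_a(b)+\lambda_b(b^-)$, pushing everything through the additive automorphism $\lambda_a$, using $\lambda_a\lambda_b=\lambda_{a\circ b}$, and absorbing the idempotents $e_{b\cdot c}$ and $\lambda_a(e_c)$. The bookkeeping points you flag (which fragments lie in $G$ versus $E$) are exactly the ones the paper's computation relies on.
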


Clearly, if $B$ is a skew left brace, then the properties showed in \cref{prop:cdot-circ} coincides with equalities $(1.1)$ and $(1.2)$ in \cite{CeSmVe19}. 
\smallskip

\begin{prop}\label{prop:be0}
Let $B$ be a left semi-brace. 
Then, $a\cdot b\in G$, for all $a,b\in B$. 
Moreover, 
\begin{enumerate}
    \item $a\cdot e = 0$, for all $a\in B$ and $e\in E$;
    \item $a\cdot b = a\cdot g_b$, for all $a, b\in B$;
    \item $a\cdot b + b = \lambda_a\left(b\right) + e_b $, for all $a,b\in B$.
\end{enumerate}
\begin{proof}
Let $a, b\in B$. Since $\lambda_b\left(b^-\right) = -g_b\in G$, we easily obtain that 
$a\cdot b = \lambda_a\left(b\right) + \lambda_b\left(b^-\right)\in G$.\\
$1.$ \ For every $a\in B$ and $e\in E$, it holds that
\begin{align*}
    a\cdot e
    = \lambda_a\left(e\right) + \lambda_e\left(e^-\right)
    = \lambda_a\left(e\right) + 0
    = 0,
\end{align*}
since $\lambda_a\left(e\right)\in E$.\\
$2.$ \ 
If $a,b\in B$, then
\begin{align*}
    a\cdot b
    &= - g_a + a\circ b - g_b&\mbox{(by \eqref{def:puntino-1})}\\
    &= -g_a + a\circ g_b + \lambda_a\left(e_b\right)-g_b\\
    &= -g_a + a\circ g_b -g_b&\mbox{(since $\lambda_a\left(e_b\right)\in E$)}\\
    &= a\cdot g_b &\mbox{(by \eqref{def:puntino-1})}
\end{align*}
and so the claim follows.\\
$3.$ \ For $a,b\in B$, by \eqref{def:puntino-1} it holds that
\begin{align*}
   a\cdot b + b &= -g_a+a\circ b - g_b + b = \lambda_a\left(b\right) + e_b,
\end{align*}
which is our claim.
\end{proof}
\end{prop}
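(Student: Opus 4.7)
The plan is to reduce everything to the two rewrites of $a\cdot b$ already noted before the statement, namely \eqref{def:puntino-1} and \eqref{def:puntino-2}, together with the identity $\lambda_b(b^-)=-g_b\in G$. Besides these, I will use one auxiliary fact about the right-group structure of $(B,+)$: since $B=G+E$ and any element decomposes uniquely as $g+e$ with $g\in G$, $e\in E$, the addition satisfies $(g_1+e_1)+(g_2+e_2)=g_1+g_2+e_2$. In particular $g+0=g$ for every $g\in G$; $e+0=0$ for every $e\in E$ (since $0\in E$ and $E$ is a right zero semigroup); and, crucially for what follows, $e+g=g$ whenever $e\in E$ and $g\in G$, which one verifies by writing $e=0+e$ and $g=g+0$ and applying the product formula above.

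To show $a\cdot b\in G$ I would check $(a\cdot b)+0=a\cdot b$. Writing $a\cdot b=\lambda_a(b)+(-g_b)$ and using $-g_b+0=-g_b$, this is immediate. For item~(1), if $e\in E$ then $g_e=e+0=0$, so $\lambda_e(e^-)=-g_e=0$; and since $\lambda_a(e)\in E$ by \cref{prop:Proposition7}(1), the sum $\lambda_a(e)+0$ collapses to $0$, giving $a\cdot e=0$. For item~(2) I would apply the semi-brace law to expand $a\circ b=a\circ(g_b+e_b)=a\circ g_b+\lambda_a(e_b)$ and substitute into \eqref{def:puntino-1}, obtaining $-g_a+a\circ g_b+\lambda_a(e_b)-g_b$; the inner term $\lambda_a(e_b)\in E$ is absorbed by the following $-g_b\in G$ via the collapsing rule, leaving $-g_a+a\circ g_b-g_b=a\cdot g_b$ (noting $g_{g_b}=g_b$). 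For item~(3) I would write $a\cdot b+b=-g_a+a\circ b+(-g_b+b)$ and simplify using $-g_b+b=0+e_b=e_b$ together with $-g_a+a\circ b=\lambda_a(b)$ from \eqref{def:lambda-def2}.

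The one subtle point, and the main obstacle I anticipate, is keeping track of legal simplifications in $(B,+)$: the additive structure is only left cancellative and has no right inverses, so one cannot naively cancel across $-g_b$ when it appears on the right. The "$e+g=g$" rule, and the dichotomy $g+0=g$ versus $e+0=0$, are the bridges that make the absorptions in items~(1) and~(2) go through cleanly; once they are in hand, each item is a short rewrite.
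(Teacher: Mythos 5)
Your proposal is correct and follows essentially the same route as the paper: reduce each item to the rewrites $a\cdot b=\lambda_a(b)+\lambda_b(b^-)$ and $a\cdot b=-g_a+a\circ b-g_b$, use the semi-brace axiom to split $a\circ(g_b+e_b)$, and absorb idempotent summands via the right-group rules $e+g=g$ and $e+0=0$. The only difference is that you make explicit the right-group absorption identities that the paper leaves implicit ("we easily obtain", "since $\lambda_a(e_b)\in E$"), which is a fair and harmless elaboration.
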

\medskip

From now on, if $X$ and $Y$ are subsets of a left semi-brace $B$, we denote by $X\cdot Y$  the subgroup of $\left(G, +\right)$ generated by elements of the form $x\cdot y$, with $x\in X$ and $y\in Y$, i.e., 
\begin{align*}
X\cdot Y = \gr\bigl\langle x\cdot y \ | \ x\in X, \ y\in Y \bigl\rangle_+.
\end{align*}

\smallskip

\section{Ideals of a left semi-brace}

The main aim of this section is providing a characterization of ideals which makes its definition more usable. In particular, the operation $\cdot$ turns out to be useful to this purpose. Besides, we introduce a suitable notion of left ideal for left semi-braces.
\medskip

Initially, let us recall the notion of ideal of a left semi-brace introduced in \cite{CaCoSt17} and a preliminary lemma.
\begin{defin}[Definition 17, \cite{CaCoSt17}]\label{def:17-CaCoSt17}
	If $B$ is a left semi-brace, a subsemigroup $I$ of $\left(B, +\right)$ is said to be an \emph{ideal} of $B$ if the following conditions hold:
	\begin{enumerate}
		\item $I$ is a normal subgroup of $\left(B, \circ\right)$;
		\item $I\cap G$ is a normal subgroup of $\left(G, +\right)$;
		\item $\rho_b\left(n\right)\in I$, for all $b\in B$ and $n\in I\cap G$;
		\item $\lambda_g\left(e\right)\in I$, for all $g\in G$ and $e\in I\cap E$.
	\end{enumerate}
\end{defin}
\smallskip

\begin{lemma}\label{le:subse-subg-B}
	Let $B$ be a left semi-brace and $I$ a subset of $B$ such that $I + 0\subseteq I$. Thus,
	\begin{enumerate}
	    \item if $x\in I$, then $g_x\in I$,
	    \item if $I\cap G$ is a subgroup of $\left(G,+\right)$ and $I$ is a subsemigroup of $\left(B,+\right)$, then $e_x\in I$.
	\end{enumerate}
	\begin{proof}
		$1.$ \ It is enough to note that
		\begin{align*}
		g_x  = x + 0 \in I.
		\end{align*}
		$2.$ \ By $1.$, since $I\cap G$ is a subgroup of $\left(G,+\right)$, we have that $-g_x\in I$. 
		So, since $I$ is subsemigroup of $\left(B,+\right)$,
		\begin{align*}
		e_x =  - g_x + x\in I.
		\end{align*}
	\end{proof}
\end{lemma}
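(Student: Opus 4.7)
The plan is to unpack the definitions $g_x = x+0$ and $e_x = -g_x + x$ from the preliminaries and feed the hypotheses on $I$ into them one at a time.

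For part~1, I would simply observe that the identity $g_x = x + 0$ rewrites the claim as $g_x \in I + 0$, and then apply the standing hypothesis $I + 0 \subseteq I$ to conclude $g_x \in I$. No further structure on $I$ is required, which explains why this first assertion needs only the weakest assumption.

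For part~2, I would bootstrap from part~1: since $g_x \in I$ and $g_x \in G$ by definition of the group part, we get $g_x \in I \cap G$. The assumption that $I \cap G$ is a subgroup of $(G,+)$ then delivers $-g_x \in I \cap G \subseteq I$. Combining this with $x \in I$ and the assumption that $I$ is a subsemigroup of $(B,+)$, the identity $e_x = -g_x + x$ immediately places $e_x$ in $I$.

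I do not expect a real obstacle here; the lemma is essentially a two-line manipulation whose only substance is recognizing that the ``group part'' and ``idempotent part'' of $x$ were previously written as $g_x = x+0$ and $e_x = -g_x + x$ (following the paragraph just after \cref{prop:Proposition7}), so that the hypotheses on $I$ translate directly into membership statements. The mildly subtle point is that one must verify $g_x \in G$ (so that it lies in $I \cap G$, allowing inversion in the abelian-ish group structure of $G$); this is immediate since $g_x = x + 0 \in B + 0 = G$.
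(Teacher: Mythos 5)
Your proposal is correct and follows exactly the same two-line argument as the paper: $g_x = x+0 \in I+0 \subseteq I$ for part 1, and then $-g_x \in I\cap G$ together with the subsemigroup hypothesis gives $e_x = -g_x + x \in I$ for part 2. The extra remark that $g_x \in G$ is a harmless (and correct) bit of bookkeeping the paper leaves implicit.
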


\noindent Note that, in particular, any ideal $I$ of a left semi-brace satisfies the assumptions of the previous lemma, since clearly $0\in I$.
\smallskip

In the following theorem, we show how  to  reformulate the definition of ideal, using the operation $\cdot$.
\begin{theor}\label{th:ideal-semi-b}
	Let $B$ be a left semi-brace and $I$ a subset of $B$.  Then, $I$ is an ideal of $B$ if and only if
	\begin{enumerate}
		\item $I + 0\subseteq I$;
		\item $I\cap G$ is a normal subgroup of $\left(G, +\right)$; %\unlhd
		\item $\lambda_g\left(I\right)\subseteq I$, for every $g\in G$;
		\item $I$ is a normal subgroup of $\left(B, \circ\right)$. 
	\end{enumerate}
	\begin{proof}
		At first suppose that $I$ is an ideal of $B$. Then, conditions $1.$, $2.$, and $4.$ are clearly satisfied. 
		Let $g\in G$ and $x\in I$.
		By \cref{le:subse-subg-B}-$2.$, we have that $e_x\in I$, thus, by \cref{def:17-CaCoSt17}-$4.$, it follows that $\lambda_{g}\left(e_x\right)\in I$.
		Moreover, since by \cref{le:subse-subg-B}-$1.$ $g_x\in I$, we obtain that
		\begin{align*}
		    \lambda_{g}\left(x\right) 
		    &= -g + g\circ\left(g_x + e_x\right)
		    = -g + g\circ g_x + \lambda_{g}\left(e_x\right)\\
		    &= -g + g\circ (g_x +g^-)+g+ \lambda_{g}\left(e_x\right)\\
		    &= -g+\rho_{g^-}(g^-_x)^-+g + \lambda_{g}\left(e_x\right)\in I,
		\end{align*}
		by \cref{def:17-CaCoSt17}-$2.$, $3.$ and $4$.
		
		Conversely, suppose that $1.$, $2.$, $3.$, and $4.$ are satisfied. 
		Initially, note that if $x\in I$, %$x = g_x + e_x$, with $g_x\in G$ and $e_x\in E$, 
		by $1.$ and \cref{le:subse-subg-B}-$1.$, we get $g_x\in I$. Hence, if $x,y\in I$, by $3.$, we have that $\lambda_{g_x^-}\left(y\right)\in I$, and, by $4.$, we obtain 
		\begin{align*}
		x + y = g_x + y = g_x\circ \lambda_{g_x^-}\left(y\right)\in I,
		\end{align*}
		i.e., $\left(I, +\right)$ is a subsemigroup of $\left(B, +\right)$.   
		Moreover, observe that clearly $1.$, $2.$, and $4.$ in \cref{def:17-CaCoSt17} are satisfied. 
		Thus, we only need to show that $3.$ in \cref{def:17-CaCoSt17} holds.
		Let $n\in G\cap I$ and $b\in B$. 
		Observe that, writing $b = h\circ e$ with $h\in G$ and $e\in E$, by $2.$ in \cref{prop:cdot-circ}, we have that 
		\begin{align}\label{eq:prop-BI}
		b\cdot n 
		= \left(h\circ e\right)\cdot n
		= h\cdot\left(e\cdot n\right) + e\cdot n + h\cdot n\in I.
		\end{align}
		Indeed, by $3.$ and $2.$, it holds that
		$h\cdot n = \lambda_{h}\left(n\right) - n\in I$. Moreover,
		\begin{align*}
		e\cdot n = \lambda_{e}\left(n\right) - n
		&= e\circ n - n
		= \left(e\circ n\circ e^-\right)\circ e - n\\
		&= e\circ n\circ e^- + \lambda_{e\circ n\circ e^-}\left(e\right) - n\\
		&= e\circ n\circ e^- - n&\mbox{(since $\lambda_{e\circ n\circ e^-}\left(e\right)\in E$)},
		\end{align*}
		hence, by $4.$ and $2.$, $e\cdot n\in I$. 
		Clearly, since $e\cdot n\in I\cap G$, it also follows that   $h\cdot\left(e\cdot n\right)\in I$, and so $b\cdot n\in I$.
		Thus, 
		\begin{align*}
		   \left(\rho_b\left(n\right)\right)^-
		    &= b^-\circ n^- + \lambda_{b^-}\left(b\right)
		    = b^-\circ n^- - g_{b^-}
		    = g_{b^-} - g_{b^-} + b^-\circ n^- - g_{b^-}\\
		    &= g_{b^-} +\lambda_{b^-}\left(n^-\right) - g_{b^-}
		    = g_{b^-} +b^-\cdot n^- + n^- - g_{b^-}.
		\end{align*}
		Hence, since $b^-\cdot n^- + n^- \in G\cap I$, by $2.$ it follows that $\left(\rho_b\left(n\right)\right)^-\in I$. Therefore, by $4.$, we obtain that $3.$ in \cref{def:17-CaCoSt17} is satisfied.
	\end{proof}
\end{theor}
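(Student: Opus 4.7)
The plan is to prove both implications by carefully tracking how the decomposition $b=g_b+e_b=g_b\circ\lambda_{g_b^-}(e_b)$ interacts with the four conditions, using \cref{le:subse-subg-B} to move between elements of $I$, their group parts, and their idempotent parts. The operation $\cdot$ and especially \cref{prop:cdot-circ} will be the key tool for the hardest step, which I expect to be showing $\rho_b(n)\in I$ in the backward direction.

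For the forward direction, I would start with an ideal $I$ and check conditions $1$--$4$. Conditions $1$, $2$, $4$ are essentially restatements of pieces of \cref{def:17-CaCoSt17} (note $I+0\subseteq I$ follows since $(I,+)$ is a subsemigroup containing $0$ as $\circ$-identity, and $0\in I$ by the normal subgroup condition). The only real work is condition $3$: given $g\in G$ and $x\in I$, I would write $x=g_x+e_x$, observe that by \cref{le:subse-subg-B} both $g_x$ and $e_x$ lie in $I$, split
\begin{align*}
    \lambda_g(x) = -g + g\circ g_x + \lambda_g(e_x),
\end{align*}
and then rewrite $-g + g\circ g_x = -g+g\circ(g_x+g^-)+g = -g+\rho_{g^-}(g_x^-)^-+g$, which belongs to $I$ thanks to parts $2.$ and $3.$ of \cref{def:17-CaCoSt17}. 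The term $\lambda_g(e_x)$ is in $I$ by part $4.$ of \cref{def:17-CaCoSt17}.

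For the backward direction, assume $1.$--$4.$ hold. The first step is to show $I$ is closed under $+$: for $x,y\in I$, \cref{le:subse-subg-B} gives $g_x\in I$, and then $x+y=g_x+y=g_x\circ\lambda_{g_x^-}(y)\in I$ by conditions $3.$ and $4.$. Once $(I,+)$ is a subsemigroup, conditions $1.$, $2.$, $4.$ of \cref{def:17-CaCoSt17} are immediate, and the remaining nontrivial task is to verify that $\rho_b(n)\in I$ for every $b\in B$ and $n\in I\cap G$. The trick here is to use the matched product decomposition $b=h\circ e$ with $h\in G$, $e\in E$, and to apply \cref{prop:cdot-circ}(2) to expand
\begin{align*}
    b\cdot n = h\cdot(e\cdot n) + e\cdot n + h\cdot n.
\end{align*}
I would check separately that $h\cdot n=\lambda_h(n)-n\in I$ (using $3.$ and $2.$), and that $e\cdot n = e\circ n\circ e^- - n\in I\cap G$ (using $4.$ to conjugate, then $2.$), which then makes $h\cdot(e\cdot n)\in I$ by $3.$ applied to the element $e\cdot n\in I\cap G$; thus $b\cdot n\in I$.

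The main obstacle is to convert this information into $\rho_b(n)\in I$. I would rewrite
\begin{align*}
    (\rho_b(n))^- = b^-\circ n^- + \lambda_{b^-}(b) = g_{b^-}+(b^-\cdot n^- + n^-) - g_{b^-},
\end{align*}
after simplifying $\lambda_{b^-}(b)=-g_{b^-}$. By the computation just carried out applied to $b^-$ and $n^-$, the middle term $b^-\cdot n^- + n^-$ lies in $I\cap G$, so normality of $I\cap G$ in $(G,+)$ (condition $2.$) puts the conjugate in $I$, and finally condition $4.$ gives $\rho_b(n)\in I$. The delicate part is carefully rearranging the formula for $(\rho_b(n))^-$ so that everything happens inside $G$ where condition $2.$ can be invoked; this is where the $\cdot$-identities pay off, because they provide the bridge between the $\circ$-structure and the additive normal subgroup $I\cap G$.
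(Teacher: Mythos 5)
Your proposal is correct and follows essentially the same route as the paper's own proof: the same splitting $\lambda_g(x)=-g+\rho_{g^-}(g_x^-)^-+g+\lambda_g(e_x)$ in the forward direction, and the same use of the matched product decomposition $b=h\circ e$ together with \cref{prop:cdot-circ}(2) and the rewriting $(\rho_b(n))^-=g_{b^-}+b^-\cdot n^-+n^--g_{b^-}$ in the backward direction. No substantive differences to report.
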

\medskip

In addition, ideals satisfy the following useful properties.
\begin{cor}\label{cor:propI}
	If $I$ is an ideal of a left semi-brace $B$, then $B\cdot I\subseteq I$ and $I\cdot B\subseteq I$.
	\begin{proof}
		We have that $B\cdot I\subseteq I$ by \cref{prop:be0} and 
			\eqref{eq:prop-BI} in the proof of \cref{th:ideal-semi-b}.\\
			Now, let $x\in I$ and $b\in B$. Thus,
			\begin{align*}
			x\cdot b 
			&=  x\cdot g_b
			= \lambda_x\left(g_b\right) - g_b
			= x\circ\left(x^- + g_b\right) - g_b
			= x\circ\left(g_b - g_b + x^- + g_b\right) - g_b\\
			&= x\circ g_b + \lambda_x\left(- g_b + x^- + g_b\right) - g_b\\ 
			&= g_b\circ\left(g_b^-\circ x\circ g_b\right) + \lambda_x\left(- g_b + x^- + g_b\right) - g_b\\
			&= g_b + \lambda_{g_b}\left(g_b^-\circ x\circ g_b\right) + \lambda_x\left(- g_b + x^- + g_b\right) + 0 - g_b.
			\end{align*}
			Note that, by $4.$ and $3.$ in \cref{th:ideal-semi-b}, we have that  $\lambda_{g_b}\left(g_b^-\circ x\circ g_b\right)\in I$, by $4.$ in \cref{th:ideal-semi-b}, $x^-\in I$ and, by $1.$, $2.$ and $4.$ in \cref{th:ideal-semi-b}, $\lambda_x\left(- g_b + x^- + g_b\right)=-g_x+x\circ\left(-g_b+x^-+g_b\right)\in I$. Therefore, by $1.$ and $2.$ in \cref{th:ideal-semi-b}, $x\cdot b\in I$.
	\end{proof}
\end{cor}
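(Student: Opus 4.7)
The plan is to handle the two inclusions separately, in both cases reducing to the ideal characterization in \cref{th:ideal-semi-b} together with \cref{prop:be0}(2), which replaces the second operand of $\cdot$ by its group part.

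For $B\cdot I\subseteq I$, I observe that the core computation has already been carried out inside the proof of \cref{th:ideal-semi-b}: equation~\eqref{eq:prop-BI} there yields $b\cdot n\in I$ for every $b\in B$ and every $n\in G\cap I$. For a general $x\in I$, \cref{prop:be0}(2) gives $b\cdot x=b\cdot g_x$, and \cref{le:subse-subg-B}(1) places $g_x$ in $G\cap I$; combining these two observations finishes this inclusion.

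For $I\cdot B\subseteq I$, I fix $x\in I$ and $b\in B$; by \cref{prop:be0}(2) again, $x\cdot b=x\cdot g_b$, so it suffices to treat $g\in G$ in place of $b$. I would then unfold $x\cdot g=\lambda_x(g)-g=x\circ(x^-+g)-g$, insert $g-g$ to obtain $x\circ(g+(-g+x^-+g))-g$, and apply the semi-brace axiom to separate off $x\circ g$:
\[
x\cdot g = x\circ g + \lambda_x(-g+x^-+g) - g.
\]
Rewriting $x\circ g=g\circ(g^-\circ x\circ g)=g+\lambda_g(g^-\circ x\circ g)$ via $a\circ c=a+\lambda_a(c)$, the task becomes to show that each summand of
\[
x\cdot g = g + \lambda_g(g^-\circ x\circ g) + \lambda_x(-g+x^-+g) - g
\]
can be placed inside $I$ using \cref{th:ideal-semi-b}: property~4 puts both $g^-\circ x\circ g$ and $x^-$ in $I$; property~3 promotes the former to $\lambda_g(g^-\circ x\circ g)\in I$; and for the middle term I would expand $\lambda_x(-g+x^-+g)=-g_x+x\circ(-g+x^-+g)$ via~\eqref{def:lambda-def2} and then combine properties~1,~2 and~4.

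The main obstacle is precisely the $\lambda_x(-g+x^-+g)$ summand: the argument $-g+x^-+g$ is not visibly in $I$, so \cref{th:ideal-semi-b}(3) cannot be applied directly, nor is the expression an obvious $\circ$-conjugate of an element of $I$. The resolution is to rewrite $\lambda_x$ additively via~\eqref{def:lambda-def2} and exploit the fact that, by \cref{prop:be0}, $x\cdot g\in G$; inserting a harmless $+0$, as in the author's proof, lets one regroup the whole expression inside $G$, where normality of $G\cap I$ in $(G,+)$ (property~2) closes the argument.
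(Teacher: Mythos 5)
Your proposal is correct and follows essentially the same route as the paper: the first inclusion is obtained exactly as in the paper by combining \eqref{eq:prop-BI} with \cref{prop:be0}, and for the second you perform the same expansion $x\cdot g_b = g_b + \lambda_{g_b}(g_b^-\circ x\circ g_b) + \lambda_x(-g_b + x^- + g_b) + 0 - g_b$ and dispatch the summands with the same items of \cref{th:ideal-semi-b}. The handling of the awkward middle term via \eqref{def:lambda-def2}, the inserted $+0$, and normality of $I\cap G$ in $(G,+)$ is precisely the paper's argument.
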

\smallskip

In general, the converse of \cref{cor:propI} does not hold. Indeed,
let $(B,\circ)$ be a non-abelian simple group and $B$ the trivial left semi-brace on such a group.
Consider the subgroup $I=\gr \langle a\rangle_{\circ}$ of $(B,\circ)$ generated by $a\in B\setminus\{ 0\}$. Note that $x\cdot y = 0$, for all $x,y\in B$. Hence
$B\cdot I\subseteq I$ and $I\cdot B\subseteq I$, but $I$ is not a normal subgroup of $(B,\circ)$. 
\medskip

\begin{prop}\label{pr:ideal-semi-b}
	Let $B$ be a left semi-brace and $I$ a subsemigroup of $(B,+)$.  
Then, $I$ is an ideal of $B$ if and only if
\begin{enumerate}
	\item $I\cap G$ is a normal subgroup of $\left(G, +\right)$; 
	\item $\lambda_g(e)\in I$, for every $g\in G$ and $e\in I\cap E$;
	\item $I\cdot B\subseteq I$ and $B\cdot I\subseteq I$;
	\item $\lambda_{a^-\circ x\circ a}(0)\in I$, for every $x\in I$ and $a\in G\cup E$.
	\item $I$ is a subgroup of $(B,\circ)$.
\end{enumerate}
\end{prop}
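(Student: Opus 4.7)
My plan is to reduce the statement to the characterization of ideals provided by \cref{th:ideal-semi-b}. For the forward direction, conditions (1) and (5) are contained in \cref{def:17-CaCoSt17}, (2) is item $4.$ of \cref{def:17-CaCoSt17} itself, (3) is \cref{cor:propI}, and (4) follows since normality in $(B,\circ)$ places $a^-\circ x\circ a$ in $I$, and its idempotent part $\lambda_{a^-\circ x\circ a}(0)$ then lies in $I$ by \cref{le:subse-subg-B}-$2.$

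For the converse I verify each of the four conditions of \cref{th:ideal-semi-b}. Condition $1.$ ($I+0\subseteq I$) is immediate because (5) supplies $0\in I$ and $I$ is additively closed. Condition $2.$ is our hypothesis (1). Condition $3.$ ($\lambda_g(I)\subseteq I$ for $g\in G$) follows by rewriting \eqref{def:puntino-1} as $\lambda_g(x)=g\cdot x+g_x$ for $g\in G$; the first summand lies in $I$ by (3), and the second ($g_x=x+0$) by the additive closure of $I$ together with $0\in I$.

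The main obstacle is condition $4.$, namely that $I$ is a normal subgroup of $(B,\circ)$; since (5) already provides the subgroup structure, only closure under conjugation needs verification. Using the matched product decomposition $B=G\circ E$, any $b\in B$ factors as $b=g\circ e$, and then $b^-\circ x\circ b=e^-\circ(g^-\circ x\circ g)\circ e$, so it suffices to treat conjugation by elements of $G$ and by elements of $E$ separately---which is exactly the shape in which hypothesis (4) is stated. Fixing $a\in G\cup E$ and $x\in I$, set $y=a^-\circ x\circ a$; the idempotent part $e_y=\lambda_y(0)$ lies in $I$ by (4), so the task reduces to proving $g_y\in I\cap G$. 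I compute $g_y$ by iterating the identity $g_{u\circ v}=g_u+u\cdot v+g_v$ (a direct rearrangement of \eqref{def:puntino-1}) and unpacking $(u\circ v)\cdot w$ via \cref{prop:cdot-circ}-$2.$

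When $a=e\in E$, the computation collapses because \cref{prop:be0}-$1.$ forces $x\cdot e=e^-\cdot e=0$ and $g_e=g_{e^-}=0$, yielding $g_y=e^-\cdot x+g_x$, which lies in $I\cap G$ by (3). When $a=g\in G$, the expansion produces $g_y=g^-+w-g^-$ with $w=g^-\cdot x+g_x+g^-\cdot(x\cdot g)+x\cdot g\in I\cap G$ by (3), and the normality of $I\cap G$ in $(G,+)$ from hypothesis (1) finishes the argument. The delicate point is tracking the contribution $g^-\cdot g=-g^--g$ that appears inside $(g^-\circ x)\cdot g$: this is precisely what cancels with the outer $g$ coming from the iterated $g_{u\circ v}$ formula, leaving the conjugation pattern $g^-+w-g^-$ to which (1) can be applied.
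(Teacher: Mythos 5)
Your overall strategy -- reduce everything to \cref{th:ideal-semi-b}, and for normality conjugate separately by elements of $G$ and of $E$ via the matched product decomposition -- is the same as the paper's; your forward direction and your conjugation computations (including the identity $g_{u\circ v}=g_u+u\cdot v+g_v$ and the cancellation coming from $g^-\cdot g=-g^--g$) are sound. However, there is a genuine error in your verification of condition $3.$ of \cref{th:ideal-semi-b}. The identity $\lambda_g(x)=g\cdot x+g_x$ holds only when $x\in G$: by \eqref{def:puntino-2} one has $g\cdot x=\lambda_g(x)-g_x$, hence
\begin{align*}
g\cdot x+g_x=\lambda_g(x)-g_x+g_x=\lambda_g(x)+0=g_{\lambda_g(x)}=\lambda_g(g_x),
\end{align*}
which is the group part of $\lambda_g(x)$ and silently discards the idempotent part $\lambda_g(e_x)$. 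So your argument only establishes $\lambda_g(I\cap G)\subseteq I$, not $\lambda_g(I)\subseteq I$. The correct decomposition, which is the one the paper uses, is $\lambda_g(x)=g\cdot g_x+g_x+\lambda_g(e_x)$, and the term $\lambda_g(e_x)$ is precisely where hypothesis (2) must be invoked (note $e_x\in I\cap E$ by \cref{le:subse-subg-B}, whose assumptions hold here). The fact that your converse never uses hypothesis (2) is the symptom of this gap. The repair is routine -- split $x=g_x+e_x$, treat the group part as you did and the idempotent part via (2) -- but as written the step fails for any $x\in I$ with $e_x\neq 0$.
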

\begin{proof}
	Suppose first that $I$ is an ideal of $B$. Clearly 1., 2., 5. are satisfied and by \cref{cor:propI}, 3. is also satisfied.
	By \cref{th:ideal-semi-b}, for every $x\in I$ and $a\in B$, $a^-\circ x\circ a\in I$. Hence
	$$\lambda_{a^-\circ x\circ a}(0)=-(a^-\circ x\circ a+0)+a^-\circ x\circ a\in I.$$
	Conversely, suppose that $1.$, $2.$, $3.$, $4.$ and $5.$ are satisfied.
	Note that $\lambda_g(x)=g\cdot g_x+g_x+\lambda_g(e_x)\in I$, for every $g\in G$ and $x\in I$.
	By \cref{th:ideal-semi-b}, it is enough to prove that $I$ is a normal subgroup of $(B,\circ)$.
	Let $x\in I$, $g\in G$ and $e\in E$. We have that
	\begin{align*}
	e^-\circ x\circ e &=\lambda_{e^-}(x+\lambda_x(e))\\
	&=\lambda_{e^-}(x)+\lambda_{e^-\circ x}(e)\\
	&=e^-\cdot x+x+\lambda_{e^-\circ x}(e)\\
	&=e^-\cdot x+x+\lambda_{e^-\circ x\circ e}(0)\in I.
	\end{align*}
	and
	\begin{align*}
	g^-\circ x\circ g &=g^-\circ x\circ g+\lambda_{g^-\circ x\circ g}(0)\\
	&=g^-+\lambda_{g^-}(x)+\lambda_{g^-}(\lambda_x(g))+\lambda_{g^-\circ x\circ g}(0)\\
	&=g^-+\lambda_{g^-}(x)+\lambda_{g^-}(\lambda_x(g))+g^--g^-+\lambda_{g^-\circ x\circ g}(0)\\
	&=g^-+\lambda_{g^-}(x)+\lambda_{g^-}(\lambda_x(g)-g)-g^-+\lambda_{g^-\circ x\circ g}(0)\\
	&=g^-+\lambda_{g^-}(x)+\lambda_{g^-}(x\cdot g)-g^-+\lambda_{g^-\circ x\circ g}(0)\in I. 
	\end{align*}
	Let $a\in B$. We have that $a=g_a+e_a=g_a\circ \lambda_{g^{-}_a}(e_a)$ with $g_a\in G$ and $e_a\in E$.
	Therefore $a^-\circ x\circ a\in I$, for every $x\in I$, and the result follows.	
\end{proof}

A special ideal of a left semi-brace is the socle, introduced in \cite[Definition 21]{CaCoSt17}.\\
If $B$ is a left semi-brace, we call \emph{socle} of $B$ the  set
\begin{align*}
\Soc\left(B\right) :=
\{ a \ | \ a\in B,  \ \rho_a =  \rho_0, \  \lambda_a =  \lambda_0\}.
\end{align*}
By \cite[Proposition 24]{CaCoSt17}, the socle of $B$ can be written also as
\begin{align*}
\Soc\left(B\right) =
\{ a \ | \ a\in G,\, \forall \, b\in B \ a + b = a\circ b, \  - a + b + a =  b + 0 \}.
\end{align*}

\noindent In particular, $\Soc\left(B\right)$ is a sub-skew left brace of $G$ and it is an ideal of $B$, as shown in \cite[Proposition 22]{CaCoSt17}.\\
Note that, if $a\in \Soc\left(B\right)$ and $b\in B$, it holds that
\begin{align*}
a\cdot b 
= \lambda_a\left(b\right) + \lambda_b\left(b^-\right)
= b + \lambda_b\left(b^-\right)
= b\circ b^-
= 0.
\end{align*}
Therefore, if $B$ is a skew left brace, the socle of $B$ can be written also as
\begin{align}\label{eq:socle-skew}
\Soc\left(B\right) =
\{ a \ | \ a\in B,\, \forall \, b\in B \ \ a\cdot b = 0, \  b + a =  a + b \}.
\end{align}
In fact, in this case, if $a,b\in B$ are such that $a\cdot b = 0$, we obtain that
\begin{align*}
\lambda_a\left(b\right)
= \lambda_a\left(b\right) - b + b
= a\cdot b + b
= 0 + b
= b.
\end{align*}
\medskip

Now, we introduce the definition of left ideal of a left semi-brace.
\begin{defin}\label{def:left-ideal}
	Let $B$ be a left semi-brace and $I$ a subset of $B$.  
	Then, $I$ is a \emph{left ideal} of $B$ if
	\begin{enumerate}
		\item $I + 0\subseteq I$;
		\item $I\cap G$ is a  subgroup of $\left(G, +\right)$; %\unlhd
		\item $\lambda_g\left(I\right)\subseteq I$, for every $g\in G$;
		\item $I$ is a subgroup of $\left(B, \circ\right)$.
	\end{enumerate}
\end{defin}

\noindent 
Clearly, the set $E$ of idempotents of a left semi-brace $B$ is a left ideal of $B$.\\
Analogously to what proved for ideals,  by conditions $1.$, $3.$, and $4.$ the additive structure $\left(I, +\right)$ of any left ideal $I$ is in particular a subsemigroup of $\left(B, +\right)$,
since
\begin{align*}
    x + y 
    = g_x + y = g_x\circ\lambda_{g_x^-}\left(y\right),
\end{align*}
for all $x,y\in I$. Moreover, note that any ideal of $B$ is clearly a left ideal of $B$.

\smallskip

\section{Left semi-braces having the set of idempotents as an ideal}

In this section, we focus on the special class of left semi-braces for which the set of the idempotents is an ideal. Specifically, we provide some useful characterizations of left semi-braces $B$ such that $E$ is an ideal of $B$ both in terms of the operation $\cdot$ and the sum and the multiplication.
\smallskip

\begin{rem}\label{rem:E-ideal}
	It is clear that $E$ is an ideal of a left semi-brace $B$ if and only if $E$ is a normal subgroup of $\left(B, \circ\right)$. Since, in general, the multiplicative structure of $B$ is the matched of  product of $\left(G, \circ\right)$ and  $\left(E,\circ\right)$, if $E$ is an ideal of $B$ we have that the group $\left(B, \circ\right)$ is precisely the semidirect product of $\left(G, \circ\right)$ acting on $\left(E,\circ\right)$.
\end{rem}
\smallskip

The following is an example of left semi-brace contained in \cite[Example 2]{CaCoSt17} for which $E$ is an ideal. 
\begin{ex}\label{ex:phi}
	Let $\left(B,\circ\right)$ be a group, $\varphi$ an idempotent endomorphism of such a group and consider $a + b:= b\circ \varphi\left(a\right)$. Then, $B$ is a left semi-brace with $G=\im \varphi$ and $E = \Ker\varphi$. Hence, $E$ is clearly an ideal of $B$. 
	In addition, let us observe that, if $g,h\in G$, since $g = \varphi\left(t\right)$ with $t\in G$, we have that
	$$
	g + h 
	= h\circ \varphi\left(g\right)
	= h\circ \varphi^2\left(t\right)
	= h\circ \varphi\left(t\right)
	= h\circ g,
	$$
	namely the skew left brace $G$ is as in \cite[Example 1.3]{GuVe17}.
\end{ex}
\smallskip

The following example ensures that not every left semi-brace has the set of idempotents as an ideal.
\begin{ex}
	Let $S$ be the skew left brace where $a + b = a\circ b$, for all $a,b\in S$, where $\left(S, \circ\right)$ is the symmetric group $\Sym_3$. Let $T$ be the trivial left semi-brace with $\left(T, \circ\right)$ isomorphic to the cyclic group $C_2$ of two elements. We write $T=\{0,t\}$, where $0$ is the neutral element of $\left(T,\circ\right)$. Let $\alpha:T\to \Aut\left(S\right)$ be the homomorphism from the group $\left(T, \circ\right)$ into the automorphism group of $\left(S, \circ\right)$ such that $\alpha\left(t\right) = \iota$, where $\iota\left(a\right) = \left( 2\, 3\right)\circ a\circ\left( 2\, 3\right)$, for every $a\in S$. Thus, if $B$ is the left semi-brace given by the semidirect product of $S$ and $T$ via $\alpha$ (see \cite[Corollary 15]{CaCoSt17}), we have that $G = \{\left(a, 0\right) \, | \, a\in S\}$ and $E = \{\left(0, u\right) \, | \, u\in T\}$ and $E$ is not an ideal of $B$. In fact, if we consider the elements $\left(\left(1\,2\,3\right),\, t\right)$ and $\left(0,\, t\right)\in E$ we obtain that
	$\left(\left(1\,2\,3\right),\, t\right) \left(0,\, t\right) \left(\left(1\,2\,3\right),\, t\right)^- =
	\left(\left(1\,3\,2\right),\, t\right)\notin E$.
\end{ex}
\medskip

In the following proposition we provide a characterization of the left semi-braces $B$, such that $E$ is an ideal of $B$, in terms of the operation $\cdot$.
\begin{prop}\label{prop:E-id}
	Let $B$ be a left semi-brace. Then, $E$ is an ideal of $B$ if and only if $e\cdot b = 0$, for all $e\in E$ and $b\in B$. 
	\begin{proof}
		Let $e\in E$ and $g\in G$. Then 
		\begin{align*}
		g^-\circ e\circ g&= g^-\circ \lambda_e(g+0)\\
		&=g^-\circ (\lambda_e(g)+\lambda_e(0))\\
		&=g^-\circ (e\cdot g+g+\lambda_e(0))\\
		&= g^- + \lambda_{g^-}\left(e\cdot g\right) - g^- + \lambda_{g^-}\left(e\right)
		\end{align*}
		where $g^- + \lambda_{g^-}\left(e\cdot g\right) - g^-\in G$ and $\lambda_{g^-}\left(e\right)\in E$.
		It follows that 
		\begin{align*}
		g^-\circ e\circ g\in E
		&\iff g^- + \lambda_{g^-}\left(e\cdot g\right) - g^- = 0\\
		&\iff \lambda_{g^-}\left(e\cdot g\right) = 0
		&\mbox{($\lambda_{g^-}\left(e\cdot g\right)\in G$)}\\
		&\iff e\cdot g = \lambda_{g}\left(0\right)&\mbox{($\lambda_g^{-1} = \lambda_{g^-}$)}\\
		&\iff e\cdot g = 0. &\mbox{($g\in G$)}
		\end{align*}
		Since every $b\in B$ can be written as $b = g_b\circ\lambda_{g_b^-}\left(e_b\right)$ with $\lambda_{g_b^-}\left(e_b\right)\in E$, by \cref{rem:E-ideal} and \cref{prop:be0}-$2.$, the assertion is verified.
	\end{proof}
\end{prop}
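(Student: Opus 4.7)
The plan is to reduce the statement to a normality computation and then carry out an explicit decomposition of $g^-\circ e\circ g$ into its $G$-part and $E$-part. By \cref{rem:E-ideal}, $E$ is an ideal of $B$ if and only if $E$ is a normal subgroup of $(B,\circ)$. Since $(B,\circ)$ is the matched product of $(G,\circ)$ and $(E,\circ)$, and $E$ is already closed under conjugation by itself (it is a subgroup), normality is equivalent to requiring $g^-\circ e\circ g\in E$ for every $g\in G$ and $e\in E$. On the other side, by \cref{prop:be0}-$2.$ the condition ``$e\cdot b=0$ for all $b\in B$'' reduces to ``$e\cdot g=0$ for all $g\in G$'' because $e\cdot b=e\cdot g_b$. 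So the whole statement collapses to the single equivalence: for all $g\in G$ and $e\in E$, $g^-\circ e\circ g\in E \iff e\cdot g=0$.

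For this equivalence I would compute $g^-\circ e\circ g$ directly. The key bridge is the identity $\lambda_e(g)=e\cdot g+g$ for $e\in E$ and $g\in G$, which falls out of \eqref{def:puntino-1} using $g_e=0$ and $g_g=g$. Combining this with $a\circ b=a+\lambda_a(b)$ and the semi-brace axiom $a\circ(b+c)=a\circ b+a\circ(a^-+c)$, I would rewrite
\[
g^-\circ e\circ g=g^-\circ(\lambda_e(g)+\lambda_e(0))=g^-\circ\bigl(e\cdot g+g+\lambda_e(0)\bigr),
\]
and then distribute so that the factor $g$ cancels via $\lambda_{g^-}(g)=-g^-$ (which comes from $g^-\circ g=0$). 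The expected outcome is a clean decomposition
\[
g^-\circ e\circ g=\bigl(g^-+\lambda_{g^-}(e\cdot g)-g^-\bigr)+\lambda_{g^-}(e),
\]
where the first summand lies in $G$ (since $e\cdot g\in G$ and $\lambda_{g^-}$ restricts to an automorphism of $(G,+)$) and the second lies in $E$ by \cref{prop:Proposition7}-$1.$.

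By uniqueness of the $G+E$ decomposition, $g^-\circ e\circ g\in E$ is then equivalent to the group part vanishing, i.e.\ $\lambda_{g^-}(e\cdot g)=0$, and since $\lambda_{g^-}$ is bijective with $\lambda_{g^-}(0)=0$ (as $g^-\in G$), this is in turn equivalent to $e\cdot g=0$. Putting the two reductions together yields the proposition.

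The main obstacle is purely computational: carefully interleaving $a\circ b=a+\lambda_a(b)$ with the compatibility axiom to peel $g^-\circ e\circ g$ into a sum with a single factor in $E$, while keeping $e\cdot g$ as an isolated summand. The derivation $\lambda_e(g)=e\cdot g+g$ is what makes the group part of the conjugate transparent; without it the expression for $g^-\circ e\circ g$ remains opaque and the equivalence with $e\cdot g=0$ is not visible.
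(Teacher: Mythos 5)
Your proposal is correct and follows essentially the same route as the paper: reduce via \cref{rem:E-ideal} and \cref{prop:be0}-$2.$ to checking $g^-\circ e\circ g\in E \iff e\cdot g=0$ for $g\in G$, $e\in E$, and then obtain the identical decomposition $g^-\circ e\circ g=\bigl(g^-+\lambda_{g^-}(e\cdot g)-g^-\bigr)+\lambda_{g^-}(e)$ into $G$- and $E$-parts. The concluding equivalence via injectivity of $\lambda_{g^-}$ and $\lambda_g(0)=0$ also matches the paper's argument.
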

\medskip

As a consequence of the previous proposition, we  provide some characterizations in terms of a special relation with respect to the elements of $G$.
\smallskip

\begin{cor}\label{cor:E-id-1}
	Let $B$ be a left semi-brace. Then, $E$ is an ideal of $B$ if and only if $a\cdot b = g_a\cdot g_b$, for all  $a,b\in B$.
	\begin{proof}
		At first let us assume that $E$ is an ideal of $B$. Thus, 
		\begin{align*}
		a\cdot b 
		&= \left(g_a\circ \lambda_{g_a^-}\left(e_a\right)\right)\cdot g_b&\mbox{by \cref{prop:be0}-$2.$}\\
		&= g_a\cdot\left(\lambda_{g_a^-}\left(e_a\right)\cdot g_b\right)
		+ \lambda_{g_a^-}\left(e_a\right)\cdot g_b
		+ g_a\cdot g_b&\mbox{by  \cref{prop:cdot-circ}-$2.$}\\
		&=g_a\cdot 0
		+ 0 + g_a\cdot g_b &\mbox{by \cref{prop:E-id} \ }\\
		&= 0 + g_a\cdot g_b &\mbox{by \cref{prop:be0}-$1.$}\\
		&= g_a\cdot g_b.
		\end{align*}
		Conversely, assume that $a\cdot b = g_a\cdot g_b$, for all $a,b\in B$. In particular, if $e\in E$ and $b\in B$ we have that
		$e\cdot b = e\cdot g_b = 0\cdot g_b = 0$.
		Therefore, by \cref{prop:E-id}, $E$ is an ideal of $B$.
	\end{proof}
\end{cor}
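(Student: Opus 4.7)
The plan is to handle the two implications separately, with \cref{prop:E-id} as the pivot: recall it characterises $E$ being an ideal by the single condition $e\cdot b = 0$ for every $e\in E$ and $b\in B$.

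For the forward direction, assume $E$ is an ideal. I would begin from the matched-product decomposition $a = g_a\circ\lambda_{g_a^-}(e_a)$ and use the fact that $\lambda_{g_a^-}(e_a)\in E$, which comes from \cref{prop:Proposition7}-$1.$. Applying the multiplicative identity \cref{prop:cdot-circ}-$2.$ to $(g_a\circ \lambda_{g_a^-}(e_a))\cdot b$ expands $a\cdot b$ as
\begin{align*}
a\cdot b = g_a\cdot\bigl(\lambda_{g_a^-}(e_a)\cdot b\bigr) + \lambda_{g_a^-}(e_a)\cdot b + g_a\cdot b.
\end{align*}
Since $\lambda_{g_a^-}(e_a)\in E$ and $E$ is an ideal, \cref{prop:E-id} forces $\lambda_{g_a^-}(e_a)\cdot b = 0$. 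Together with the absorption $g_a\cdot 0 = 0$ noted before \cref{prop:cdot-circ}, this kills the first two summands and leaves $a\cdot b = g_a\cdot b$; a final appeal to \cref{prop:be0}-$2.$ turns the right-hand factor into $g_b$.

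For the converse, assume $a\cdot b = g_a\cdot g_b$ for all $a,b\in B$. The one fact I need is that $g_e = 0$ for every $e\in E$; this follows because $0\in E$ and $(E,+)$ is a right-zero semigroup, hence $g_e = e + 0 = 0$. The hypothesis then gives $e\cdot b = g_e\cdot g_b = 0\cdot g_b = 0$ for all $e\in E$ and $b\in B$ (using $0\cdot c = 0$), and \cref{prop:E-id} concludes that $E$ is an ideal. The whole argument is really bookkeeping built on \cref{prop:cdot-circ}-$2.$ together with the absorption identities $a\cdot e = 0$ and $0\cdot a = 0$; I do not anticipate any real obstacle, the only delicate point being the quick identification of $g_e$ with $0$ via the right-zero structure on $E$.
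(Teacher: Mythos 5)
Your proof is correct and follows essentially the same route as the paper: both directions pivot on \cref{prop:E-id}, the forward implication uses the decomposition $a=g_a\circ\lambda_{g_a^-}(e_a)$ together with \cref{prop:cdot-circ}-$2.$ and the absorption identities, and the converse reduces to $g_e=0$ for $e\in E$. The only (immaterial) difference is that you invoke \cref{prop:be0}-$2.$ at the end to replace $b$ by $g_b$, whereas the paper does so at the start.
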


\smallskip

\begin{cor}\label{cor:E-id}
	Let $B$ be a left semi-brace. Then, $E$ is an ideal of $B$ if and only if $e\circ g = g + e$, for all $e\in E$ and $g\in G$.
	\begin{proof}
		Observe that, if $e\in E$ and $g\in G$, 
		\begin{align*}
		g + e
		= e\circ  e^-\circ\left(g + e\right) 
		= e\circ\left(e^-\circ g + \lambda_{e^-}\left(e\right)\right)
		= e\circ\left(e^-\circ g + 0\right)
		= e\circ\left(e^-\cdot g + g\right).
		\end{align*}
		Therefore, by \cref{prop:E-id} and \cref{prop:be0}-$2.$,  the claim follows.		
	\end{proof}
\end{cor}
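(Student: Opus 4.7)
The plan is to use the computation suggested in the excerpt, namely
\[
g + e \;=\; e\circ e^-\circ(g+e) \;=\; e\circ\bigl(e^-\circ g + \lambda_{e^-}(e)\bigr) \;=\; e\circ\bigl(e^-\circ g + 0\bigr) \;=\; e\circ\bigl(e^-\cdot g + g\bigr),
\]
where the last equality uses \cref{prop:be0}-$3$ (i.e.\ $e^-\cdot g+g=\lambda_{e^-}(g)+e_g=e^-\circ g$, as $e_g=0$ for $g\in G$). This identity is the key bridge: it turns the question of whether $e\circ g=g+e$ into the question of whether $e^-\cdot g=0$.

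For the forward direction, suppose $E$ is an ideal of $B$. By \cref{prop:E-id}, $e'\cdot b=0$ for all $e'\in E$ and $b\in B$; applying this with $e'=e^-$ (which lies in $E$ since $E$ is a subgroup of $(B,\circ)$) and $b=g$ gives $e^-\cdot g=0$, so the identity above collapses to $g+e=e\circ(0+g)=e\circ g$.

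For the converse, assume $e\circ g=g+e$ for all $e\in E$, $g\in G$. Combining this with the identity yields $e\circ g=e\circ(e^-\cdot g+g)$; left cancellation in the group $(B,\circ)$ gives $g=e^-\cdot g+g$. Since $e^-\cdot g\in G$ by \cref{prop:be0} and $(G,+)$ is a group, we may cancel $g$ on the right to conclude $e^-\cdot g=0$. As $e$ ranges over $E$ so does $e^-$, hence $e'\cdot g=0$ for every $e'\in E$ and $g\in G$. Finally, \cref{prop:be0}-$2$ gives $e'\cdot b=e'\cdot g_b=0$ for every $b\in B$, and \cref{prop:E-id} then yields that $E$ is an ideal of $B$.

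No real obstacle is expected; the only subtlety is being careful that $(B,+)$ is only \emph{left} cancellative, so the cancellation $e^-\cdot g+g=g \Rightarrow e^-\cdot g=0$ must be carried out inside $(G,+)$ (which is a group), not by appealing to right cancellation in $(B,+)$. Using that $E$ is a subgroup of $(B,\circ)$ (so $e\in E\iff e^-\in E$) is likewise needed but routine.
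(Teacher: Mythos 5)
Your argument is correct and is essentially the paper's own proof: the same displayed identity reduces the statement to $e^-\cdot g=0$, which is then dispatched via \cref{prop:E-id} and \cref{prop:be0} exactly as the paper intends. One cosmetic slip in your parenthetical: $\lambda_{e^-}(g)+e_g$ equals $e^-\circ g+0$ (the group part of $e^-\circ g$) rather than $e^-\circ g$ itself, since $e^-\circ g$ need not lie in $G$; this does not affect the argument, because the displayed chain only requires $e^-\cdot g+g=e^-\circ g+0$, which does hold.
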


\medskip

By the previous corollary we show the following result that is also related to what is stated in \cref{rem:E-ideal}.
\begin{theor}\label{theor:semidirect-product}
	Let $B$ be a left semi-brace. Then $E$ is an ideal of $B$ if and only if $B$ is isomorphic to a semidirect product $T\rtimes A$ of a trivial left semi-brace $T$ by a skew left brace $A$.
	\begin{proof}  Let $B$ be a left semi-brace such that $E$ is an ideal of $B$. We shall see that $B\cong E\rtimes G$, the semidirect product of $E$ by $G$ by the action $\lambda$ of $G$ over $E$ (see \cite[Corollary 15]{CaCoSt17}). Note that by \cref{cor:E-id},
		$$g\circ e\circ g^{-}=g\circ (g^{-}+e)=\lambda_g(e),$$
		for all $e\in E$ and $g\in G$. Hence $\lambda_g\in \Aut(E,+,\circ)$. Let $f\colon B\rightarrow E\rtimes G$ be the map defined by $f(a)=(e_a,g_a)$, for all $a\in B$. Then we have 
		$$f(a+b)=(e_b,g_a+g_b)=(e_a,g_a)+(e_b,g_b)=f(a)+f(b)$$
		and by \cref{cor:E-id},
		\begin{align*}f(a\circ b)&=f((g_a+e_a)\circ (g_b+e_b))=f(e_a\circ g_a\circ e_b\circ g_b)\\
		&=f(e_a\circ\lambda_{g_a}(e_b)\circ g_a\circ g_b)=
		f(g_a\circ g_b+e_a\circ\lambda_{g_a}(e_b))\\
		&=(e_a\circ\lambda_{g_a}(e_b),g_a\circ g_b)=(e_a, g_a)\circ (e_b,g_b)=f(a)\circ f(b),
		\end{align*} 
		for all $a,b\in B$. Hence $f$ is an isomorphism.
		Conversely, suppose that $B\cong T\rtimes A$, a semidirect product of a trivial left semi-brace $T$ and a skew left brace $A$ via a homomorphism $\alpha\colon (A,\circ)\rightarrow \Aut(T,+,\circ)$. The set of idempotents of $(T\rtimes A,+)$ is $T\times \{ 0\}$. By the definition of the semidirect product of left semi-braces, $T\times \{ 0\}$ is an ideal of $T\rtimes A$, hence $E$ is an ideal of $B$ and the result follows.   
	\end{proof}
\end{theor}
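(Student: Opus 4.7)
The plan is to prove both implications separately, with the forward direction being the substantive one. Assume first that $E$ is an ideal of $B$. My target is to build an explicit isomorphism $f\colon B\to E\rtimes G$, where the semidirect product is formed via an action of $G$ on $E$ by the $\lambda$-maps. The preliminary step is to verify that, for every $g\in G$, the restriction $\lambda_g|_E$ is an automorphism of the left semi-brace $(E,+,\circ)$. For the multiplicative part, I would use \cref{cor:E-id}: since $e\circ g=g+e$ for all $e\in E$, $g\in G$, one gets
\[
g\circ e\circ g^{-}=g\circ(g^{-}+e)=\lambda_g(e),
\]
so $\lambda_g|_E$ is conjugation by $g$ inside $(B,\circ)$, and lands in $E$ by normality of $E$ in $(B,\circ)$ (cf.\ \cref{rem:E-ideal}); thus it is a multiplicative automorphism. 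The additive part is free, because $\lambda_g\in\Aut(B,+)$ stabilises $E$ by \cref{prop:Proposition7}-$1$.

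Next, I would define $f(a):=(e_a,g_a)$ using the unique decomposition $a=g_a+e_a$. Bijectivity is immediate from uniqueness. To check that $f$ is additive, I exploit the right-group decomposition $(B,+)\cong G\times E$: from $(E,+)$ being a right zero semigroup and $G$ being an additive subgroup, one reads off $g_{a+b}=g_a+g_b$ and $e_{a+b}=e_b$, which matches the semidirect-product addition $(e,g)+(e',g')=(e',g+g')$. For multiplicativity, the decisive manipulation uses \cref{cor:E-id} twice: first as $e_a\circ g_a=g_a+e_a$ to write $a=e_a\circ g_a$ and similarly for $b$, and then as $g_a\circ e_b=\lambda_{g_a}(e_b)\circ g_a$ (both sides equal $g_a+\lambda_{g_a}(e_b)$). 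This yields
\[
a\circ b=e_a\circ g_a\circ e_b\circ g_b=\bigl(e_a\circ\lambda_{g_a}(e_b)\bigr)\circ\bigl(g_a\circ g_b\bigr),
\]
and since $e_a\circ\lambda_{g_a}(e_b)\in E$ (as $(E,\circ)$ is a subgroup), a final application of \cref{cor:E-id} identifies $g_{a\circ b}=g_a\circ g_b$ and $e_{a\circ b}=e_a\circ\lambda_{g_a}(e_b)$, matching the semidirect-product multiplication exactly.

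For the converse, suppose $B\cong T\rtimes A$ with $T$ a trivial left semi-brace and $A$ a skew left brace, the action given by some $\alpha\colon(A,\circ)\to\Aut(T,+,\circ)$. Since the addition on the semidirect product is the componentwise one and $(T,+)$ is right zero while $(A,+)$ is a group, an element $(t,a)$ is an additive idempotent iff $a+a=a$, i.e.\ $a=0$. Hence the set of additive idempotents is $T\times\{0\}$, which is the kernel of the projection onto $A$, hence normal in the multiplicative group. By \cref{rem:E-ideal}, $E$ is an ideal of $B$.

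The main obstacle, as usual when translating between a semidirect-product presentation and an intrinsic one, is the multiplicativity check in the forward direction: the two applications of the identity $e\circ g=g+e$ must be performed in the right order, and one must verify that the factor $e_a\circ\lambda_{g_a}(e_b)$ indeed lies in $E$ so that the last reassembly $E\circ G=G+E$ is legitimate. Everything else (bijectivity, additivity, the idempotent calculation in the converse) is routine bookkeeping once the right-group decomposition of $(B,+)$ and the matched-product decomposition of $(B,\circ)$ are in hand.
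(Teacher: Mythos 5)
Your proposal is correct and follows essentially the same route as the paper's proof: the same isomorphism $f(a)=(e_a,g_a)$ onto $E\rtimes G$, the same use of \cref{cor:E-id} to show $\lambda_g|_E$ is conjugation by $g$ and to rewrite $a\circ b$ as $\bigl(e_a\circ\lambda_{g_a}(e_b)\bigr)\circ\bigl(g_a\circ g_b\bigr)$, and the same identification of the idempotents of $T\rtimes A$ with $T\times\{0\}$ in the converse. The only difference is that you spell out slightly more explicitly why $T\times\{0\}$ is normal (kernel of the projection), which the paper leaves to the definition of the semidirect product.
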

\medskip

It is known that if $B$ is a group for which there exists an idempotent endomorphism $\varphi$, then $B = \Ker\varphi \im\varphi$ and $\Ker\varphi \cap \im\varphi = \{1\}$. Vice versa, if there exist $N, H$ subgroups of $B$ such that $N$ is normal, $B= NH$ and $N\cap H = \{1\}$, then every element $b$ can be written in a unique way as $b=n_b h_b$ with $n_b\in N$ and $h_b\in H$ and the projection on $H$, i.e., the map $\varphi:B\to B$ given by $\varphi\left(b\right)= h_b$, for all $b\in B$, is an idempotent endomorphism of the group $B$. In the context of left semi-brace $B$ having $E$ as an ideal, one can show that such a projection is exactly the map $\rho_0$. Consequently, we have the following characterization.
\begin{theor}\label{th:E-ideal-varphi}
	Let $B$ be a left semi-brace. Then, $E$ is an ideal of $B$ if and only if there exists an idempotent endomorphism $\varphi$ of the group $\left(B,\circ\right)$ such that $G=\im\varphi$ and $E= \Ker\varphi$. Furthermore, in this case $\varphi=\rho_0$.
	\begin{proof}
		Initially, let us assume that $E$ is an ideal of $B$. 
		If $b\in B$, by \cref{cor:E-id}, we have that 
		$b^- 
		= \left(g_b + e_b\right)^- 
		= \left(e_b\circ g_b\right)^-
		= g_b^-\circ e_b^- 
		= g_b^- + \lambda_{g_b^-}\left(e_b^-\right)$.
		Since $\lambda_{g_b^-}\left(e_b^-\right)\in E$, it follows that $g_{b^-} =g_b^-$. Thus,  
		\begin{align*}
		\rho_0\left(b\right) 
		= \left(b^- + 0\right)^-\circ 0 
		= \left(g_{b^-}\right)^- 
		= g_b.
		\end{align*}
		Set $\varphi:=\rho_0$, it holds that $\varphi^2\left(b\right) = \varphi\left(g_b\right) = g_b = \varphi\left(b\right)$. In addition, if $a,b\in B$, by the proof of \cref{theor:semidirect-product}, we have that $\varphi(a\circ b)=g_a\circ g_b=\varphi(a)\circ \varphi(b)$, i.e., $\varphi$ is an endomorphism of $\left(B,\circ\right)$.\\  
		Finally, the converse part of the statement is trivial.
	\end{proof}
\end{theor}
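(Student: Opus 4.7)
The plan is to exhibit the idempotent endomorphism explicitly as $\rho_{0}$ and verify its properties directly from the characterizations of the ideal condition on $E$ proved earlier.

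For the forward implication, assume $E$ is an ideal of $B$. By \cref{cor:E-id}, every $b\in B$ admits the factorization $b = g_b + e_b = e_b\circ g_b$, and hence $b^- = g_b^-\circ e_b^- = g_b^- + \lambda_{g_b^-}(e_b^-)$. Since $\lambda_{g_b^-}(e_b^-)\in E$ by \cref{prop:Proposition7}(1), this additive decomposition yields $g_{b^-} = g_b^-$. Consequently,
\[
\rho_0(b) = (b^- + 0)^-\circ 0 = (g_{b^-})^- = g_b.
\]
Setting $\varphi:=\rho_0$, idempotency is immediate: since $g_b\in G$ implies $g_{g_b} = g_b$, we get $\varphi^2(b) = \varphi(g_b) = g_b = \varphi(b)$.

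Next, I would verify that $\varphi$ is a homomorphism of $(B,\circ)$. This reduces to checking that the group part of a $\circ$-product satisfies $g_{a\circ b} = g_a\circ g_b$, which is precisely the calculation performed in the proof of \cref{theor:semidirect-product} when computing $f(a\circ b)$; so I would simply appeal to that computation. The identifications $\im\varphi = G$ and $\Ker\varphi = E$ then follow from $\varphi(b)=g_b$: every $g\in G$ is fixed by $\varphi$ (giving $\im\varphi = G$), while $\varphi(b)=0$ is equivalent to $g_b = 0$, which is equivalent to $b = e_b\in E$.

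For the converse, assume such an idempotent endomorphism $\varphi$ of $(B,\circ)$ with $\im\varphi = G$ and $\Ker\varphi = E$ exists. Then $E$ is a normal subgroup of $(B,\circ)$ as the kernel of a group homomorphism, and by \cref{rem:E-ideal} this is exactly the condition for $E$ to be an ideal of $B$. The uniqueness assertion $\varphi=\rho_0$ is already forced by the forward direction (since there is a unique projection of the internal semidirect product $B=E\rtimes G$ onto $G$ along $E$). The only non-routine point is the identity $g_{b^-}=g_b^-$, which depends crucially on \cref{cor:E-id}; everything else is a direct consequence of results already established in the preceding pages.
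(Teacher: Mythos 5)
Your proposal is correct and follows essentially the same route as the paper: it defines $\varphi=\rho_0$, derives $g_{b^-}=g_b^-$ from \cref{cor:E-id} to get $\rho_0(b)=g_b$, and appeals to the computation in the proof of \cref{theor:semidirect-product} for multiplicativity, with the converse reduced to normality of $\Ker\varphi$ via \cref{rem:E-ideal}. The only difference is that you spell out $\im\varphi=G$, $\Ker\varphi=E$, and the converse explicitly, which the paper leaves as trivial.
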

\medskip

Let us observe that, although left semi-braces $B$ as in \cref{ex:phi} has the shape of that in \cref{th:E-ideal-varphi}, not every left semi-brace having $E$ as ideal of $B$ are of the type in \cref{ex:phi}.
Effectively, we may consider as a simple example the direct product $B$ of a skew left brace $S$ where $a + b= a\circ b$, for all  $a,b\in S$, with $\left(S,\circ\right)$ a non-abelian group, and a trivial left semi-brace $T$. In this way, $G = \{\left(a, 0\right) \, | \, a\in S\}$ and $E = \{\left(0, u\right) \, | \, u\in T\}$  and obviously, in general, 
$\left(a, 0\right)+\left(b, 0\right)\neq \left(b, 0\right)\circ\left(a, 0\right)$.

\smallskip

\section{Right series of a left semi-brace}

This section is devoted to investigating right nilpotent left semi-braces. In more detail, we pose particular attention to left semi-braces for which $\left(G,+\right)$ is a nilpotent group and admitting a special series of ideals called $z$-series. In this context, we analyze the role played by an ideal strictly linked to the n-th socle of a left semi-brace.
\medskip

Initially, we introduce the notion of right series of a left semi-brace which we show to be a chain consisting of ideals.

\medskip

\begin{defin}
Let $B$ be a left semi-brace. We define $B^{\left(1\right)}:= B$ and, for every positive integer $n$,
\begin{align*}
B^{\left(n + 1\right)} 
= B^{\left(n\right)}\cdot B + E
%= \bigl\langle x\cdot b \ | \ x\in B^{\left(n\right)}, \ b\in B \bigl\rangle_+ \, +\,  E.
\end{align*}
We call the series
$B^{\left(1\right)} \supseteq B^{\left(2\right)} \supseteq \cdots \supseteq B^{\left(n\right)}\cdots $\, the \emph{right series} of $B$.
\end{defin}

Let us note that if $B$ is a skew left brace then the previous definition coincides with that provided in \cite[p. 1372]{CeSmVe19}. 
Moreover, similarly to \cite[Proposition 2.1]{CeSmVe19}, the sets $B^{\left(n\right)}$ are ideals of a left semi-brace $B$, as shown in the following proposition.
\begin{prop}\label{prop:B(n)-ideal}
	Let $B$ be a left semi-brace. Then $B^{\left(n\right)}$ is an ideal of $B$, for every positive integer $n$.
	\begin{proof}
		We show the claim by using \cref{th:ideal-semi-b} and proceeding by induction on $n$.  The case $n=1$ is trivially satisfied. We assume that the assertion is true for some $n\geq 1$. 
		Initially, let us note that clearly  $B^{\left(n+1\right)} + 0\subseteq B^{\left(n+1\right)}$, so the condition $1.$ in \cref{th:ideal-semi-b} holds.
		To prove that condition $2.$ in \cref{th:ideal-semi-b} is satisfied, i.e., $B^{\left(n+1\right)}\cap G$ is a normal subgroup of $(G,+)$, initially observe that, by \cref{prop:be0}, 
		%$B^{\left(1\right)}\cap G = G$ and 
		$$B^{\left(n+1\right)}\cap G = B^{\left(n\right)}\cdot B
		=\gr\langle x\cdot b\mid x\in B^{\left(n\right)}\mbox{ and }b\in B\rangle_+.
		$$
		Now, we show that $\left(B^{\left(n+1\right)}\cap G, +\right)$ is a normal subgroup of $\left(G,+\right)$.
		Let $g\in G$, $x\in B^{\left(n\right)}$, and $b\in B$. Note that
		\begin{align*}
		x\cdot g+g+\lambda_x(-g)+0&=\lambda_x(g)-g+g+\lambda_x(-g)+0\\
		&=\lambda_x(g)+\lambda_x(-g)+0\\
		&=\lambda_x(0)+0=0.
	    \end{align*} Hence
        \begin{align}\label{eq:oposit}
        &g+\lambda_x(-g)+0=-x\cdot g.
        \end{align} 
        Then,
		\begin{align*}
		g + x\cdot b  - g 
		&= g + \lambda_x\left(b\right) - (b+0) - g\\      
		&= g + \lambda_x\left(-g+g+b\right) - \left(g + b+0\right)\\
		&= g + \lambda_x\left(- g\right) + \lambda_x\left(g + b\right) - \left(g + b+0\right)\\
		&= -x\cdot g + \lambda_x\left(g + b\right) - \left(g + b+0\right)& (\mbox{by }\cref{eq:oposit})\\
		&= -x\cdot g + x\cdot\left(g + b\right)\in B^{\left(n\right)}\cdot B.
		\end{align*}
		Thus, $2.$ in \cref{th:ideal-semi-b} is satisfied.		

Now, let us prove the condition $3.$ in \cref{th:ideal-semi-b}. Since $\lambda_g(e)\in E$, for every $e\in E$ and $g\in G$, it is enough to show that $\lambda_g(x\cdot b)\in B^{\left(n\right)}\cdot B$, for every $g\in G$, $x\in B^{\left(n\right)}$ and $b\in B$. Let $g\in G$, $x\in B^{\left(n\right)}$,  $b\in B$. We have 
\begin{align*}
\lambda_{g}\left(x\cdot b\right)&=\lambda_{g}\left(x\cdot g_b\right)&\mbox{(by \cref{prop:be0}-2.)}\\
&= \lambda_{g}\left(\lambda_x\left(g_b\right) - g_b\right)\\
&= \lambda_{g\circ x\circ g^-}\lambda_{g}\left(g_b\right) -\lambda_{g}\left( g_b\right)\\
&= \left(g\circ x\circ g^-\right)\cdot\lambda_{g}\left(g_b\right),
\end{align*}
and thus $\lambda_{g}\left(x\cdot b\right)\in B^{\left(n+1\right)}$ by the inductive hypothesis.

Finally, we prove that condition $4.$ in \cref{th:ideal-semi-b} holds. 
At first, observe that $\left(B^{\left(n+1\right)}, \circ\right) \leq\left(B,\circ\right)$.
Indeed, if $a,b\in B^{\left(n+1\right)}$,  
recalling that $x\circ y = x+ \lambda_x\left(y\right)$, for all $x,y\in B$, it follows that
\begin{align*}
a\circ b^-
&=a+\lambda_a(b^-)
= a + \lambda_a(b^-)\circ 0\\
&=a+\lambda_a(b^-) +\lambda_{\lambda_a(b^-)}(0)\\
&=a+a\cdot b^-+b^-+\lambda_{\lambda_a(b^-)}(0) &(\mbox{since }\lambda_{b^-}(b)+b^-\in E)\\
&=a+a\cdot b^-+\lambda_{g_b^-}(e_b)^-\circ g_b^-+\lambda_{\lambda_a(b^-)}(0)\\
&=a+a\cdot b^-+\lambda_{g_b^-}(e_b)^-\cdot g_b^-+g_b^-+\lambda_{\lambda_a(b^-)}(0)&(\mbox{since }\lambda_{g_b^-}(g_b)+g_b^-\in E)\\
&=a+a\cdot b^-+\lambda_{g_b^-}(e_b)^-\cdot g_b^--\lambda_{g_b^-}(g_b)+\lambda_{\lambda_a(b^-)}(0)\in B^{\left(n+1\right)},
\end{align*} 
by  condition $3.$ in \cref{th:ideal-semi-b} and since $B^{\left(n+1\right)}\subseteq B^{\left(n\right)}$. 
Hence $(B^{\left(n+1\right)},\circ)$ is a subgroup of $(B,\circ)$. 
We will prove that $\left(B^{\left(n+1\right)}, \circ\right)$ is normal in $\left(B,\circ\right)$.
Let $a\in B^{\left(n+1\right)}$ and $c\in B$. Note that
\begin{align*}
c\circ a\circ c^-&=g_c\circ\lambda_{g_c^-}(e_c)\circ a\circ\left(\lambda_{g_c^-}(e_c)\right)^-\circ g_c^-\\
&=g_c\circ\lambda_{g_c^-}(e_c)\circ g_c^-\circ g_c\circ a\circ g_c^-\circ g_c\circ\left(\lambda_{g_c^-}(e_c)\right)^-\circ g_c^-.
\end{align*}
Hence, in order to proof that $c\circ a\circ c^-\in B^{\left(n+1\right)}$, we may assume that $c\in G$. Now we have
\begin{align*}
c\circ a \circ c^-
&=c+\lambda_c(a+\lambda_a(c^-))\\
&=c+\lambda_c(a)+\lambda_{c\circ a}(c^-)\\
&= c + \lambda_c(a) + \lambda_{c\circ a}(c^-)\circ 0\\
&=c+\lambda_c(a)+\lambda_{c\circ a}(c^-)+\lambda_{\lambda_{c\circ a}(c^-)}(0)\\
&=c+\lambda_c(a)+\lambda_{c\circ a\circ c^-}(\lambda_{c}(c^-))+\lambda_{\lambda_{c\circ a}(c^-)}(0)\\
&=c+\lambda_c(a)+\lambda_{c\circ a\circ c^-}(-c)+c-c+\lambda_{\lambda_{c\circ a}(c^-)}(0)\\
&=c+\lambda_c(a)+(c\circ a\circ c^-)\cdot(-c)-c+\lambda_{\lambda_{c\circ a}(c^-)}(0).
\end{align*}  
Since $ B^{\left(n+1\right)}\subseteq  B^{\left(n\right)}$, by the induction hypothesis, $c\circ a\circ c^-\in  B^{\left(n\right)}$, and thus $(c\circ a\circ c^-)\cdot(-c)\in B^{\left(n+1\right)}$. By \cref{th:ideal-semi-b}-3., $\lambda_c(a)\in B^{\left(n+1\right)}$. Hence, by \cref{th:ideal-semi-b}-2.,
$$c+\lambda_c(a)+(c\circ a\circ c^-)\cdot(-c)-c\in B^{\left(n+1\right)}.$$
Therefore $c\circ a\circ c^-\in B^{\left(n+1\right)}$ and the result follows by induction.						
\end{proof}
\end{prop}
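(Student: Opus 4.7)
The plan is to proceed by induction on $n$, applying the characterization of ideals given in \cref{th:ideal-semi-b}, so at each stage I would verify the four conditions: closure under $+0$, normality of the intersection with $G$ in $(G,+)$, $\lambda_g$-invariance for $g\in G$, and normality in $(B,\circ)$. The base case $n=1$ is immediate since $B^{(1)}=B$. For the inductive step, I assume $B^{(n)}$ is an ideal, and recall from \cref{prop:be0} that $x\cdot b\in G$, so $B^{(n+1)}\cap G=B^{(n)}\cdot B$.

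Conditions 1--3 should go through by direct computation. For condition 1, every element of $B^{(n+1)}$ decomposes as $x+e$ with $x\in G$ and $e\in E$, and since $e+0=0$ for $e\in E$ (because $g_e=0$), we get $(x+e)+0=x\in B^{(n+1)}$. For condition 2, given a generator $x\cdot b$ with $x\in B^{(n)}$ and $b\in B$, I would use \eqref{def:puntino-1} and the identity $g+\lambda_x(-g)+0=-x\cdot g$ (obtained from $\lambda_x(0)=0$ when $x\in G$, extended appropriately) to rewrite the conjugate $g+x\cdot b-g$ as $-x\cdot g+x\cdot(g+b)$, which lies in $B^{(n)}\cdot B$. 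For condition 3, since $\lambda_g(E)=E$ by \cref{prop:Proposition7}, it suffices to handle $\lambda_g(x\cdot b)$; expanding $x\cdot b=\lambda_x(g_b)-g_b$ via \cref{prop:be0}-2 and using the homomorphism identity $\lambda_g\lambda_x=\lambda_{g\circ x\circ g^-}\lambda_g$, this simplifies to $(g\circ x\circ g^-)\cdot\lambda_g(g_b)\in B^{(n+1)}$ by the inductive hypothesis, because normality of $B^{(n)}$ in $(B,\circ)$ gives $g\circ x\circ g^-\in B^{(n)}$.

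The main obstacle is condition 4. First I need to show that $(B^{(n+1)},\circ)$ is a subgroup of $(B,\circ)$; for $a,b\in B^{(n+1)}$ I would expand $a\circ b^-=a+\lambda_a(b^-)$ and repeatedly insert the identities $\lambda_{b^-}(b)+b^-\in E$ and $\lambda_{g_b^-}(g_b)+g_b^-\in E$ to express $a\circ b^-$ as a sum of elements each of which is forced into $B^{(n+1)}$ by condition 3 and the inclusion $B^{(n+1)}\subseteq B^{(n)}$. For normality under conjugation by an arbitrary $c\in B$, I would write $c=g_c\circ\lambda_{g_c^-}(e_c)$ with the second factor in $E\subseteq B^{(n+1)}$; the idempotent part can be absorbed because $B^{(n+1)}$ is already a subgroup of $(B,\circ)$, reducing the problem to conjugation by $c\in G$. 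For $c\in G$ and $a\in B^{(n+1)}$, I would manipulate $c\circ a\circ c^-=c+\lambda_c(a+\lambda_a(c^-))$ using $\lambda_{c\circ a}(c^-)=\lambda_{c\circ a\circ c^-}\lambda_c(c^-)=\lambda_{c\circ a\circ c^-}(-c)$ to rewrite the conjugate as $c+\lambda_c(a)+(c\circ a\circ c^-)\cdot(-c)-c+\lambda_{\lambda_{c\circ a}(c^-)}(0)$; the last summand lies in $E\subseteq B^{(n+1)}$, $\lambda_c(a)\in B^{(n+1)}$ by condition 3, and the crucial $\cdot$-term is controlled via the inductive hypothesis applied to $c\circ a\circ c^-\in B^{(n)}$, producing an element of $B^{(n)}\cdot B=B^{(n+1)}\cap G$; combining these via condition 2 closes the induction.
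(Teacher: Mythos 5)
Your proposal is correct and follows essentially the same route as the paper's proof: induction via \cref{th:ideal-semi-b}, the identity $g+\lambda_x(-g)+0=-x\cdot g$ to rewrite $g+x\cdot b-g=-x\cdot g+x\cdot(g+b)$, the relation $\lambda_g(x\cdot b)=(g\circ x\circ g^-)\cdot\lambda_g(g_b)$, the same expansion of $a\circ b^-$ for closure under $\circ$, and the same computation of $c\circ a\circ c^-$ for $c\in G$ after reducing to that case. The only (harmless) cosmetic difference is that you justify the reduction to $c\in G$ by noting $E\subseteq B^{(n+1)}$ and that the subgroup absorbs conjugation by its own elements, rather than by the paper's explicit insertion of $g_c^-\circ g_c$.
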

\bigskip 

Let us introduce the following definition.
\begin{defin}
	A left semi-brace $B$ is said to be \emph{right nilpotent} if $B^{\left(n\right)} = E$ for some positive integer $n$.
\end{defin}
\noindent Clearly, if $B$ is a skew left brace, then the previous definition coincides with Definition $2.4$ in \cite{CeSmVe19}.
Besides, note that if a left semi-brace $B$ is right nilpotent, then $E$ is trivially an ideal of $B$.
\medskip

Now, we introduce the notion of generalized socle. 
\begin{defin}\label{def:gen-zoc}
	Let $B$ be a left semi-brace such that $E$ is an ideal of $B$. Then, let us define the set
	\begin{align*}
	\Zoc\left(B\right) := \Soc\left(B\right) + E
	\end{align*}
	which we call \emph{generalized socle} of $B$.
\end{defin}

Note that, in general,  $\Zoc\left(B\right)\subseteq \Soc\left( G\right) + E$ but $\Zoc\left(B\right)\neq\Soc\left( G\right) + E$.
\begin{ex}\label{ex:semi-braceS3}
	For instance, let $B$ be the left semi-brace recalled in \cref{ex:phi}. Thus, as observed in \cite[Example 25]{CaCoSt17}, $\Soc\left(B\right) = \Z\left(B, \circ\right)\cap \im\varphi$. Moreover,  $\Soc\left(G\right) = \Z\left(G, \circ\right)$. Indeed, if $g\in \Soc\left(G\right)$ and $h\in G$, since $g\in \im\varphi$, there exists $a\in B$ such that $g = \varphi\left(a\right)$ and so
	$g\circ h  = g+h  = h\circ \varphi^2\left(a\right) = h\circ \varphi\left(a\right) = h\circ g$, hence $g\in \Z\left(G, \circ\right)$. 
	Conversely, if $g\in \Z\left(G, \circ\right)$ and $h\in G$, since $g=\varphi\left(a\right)$ and $h=\varphi\left(b\right)$ for certain $a,b\in B$, we obtain that
	$g + h  = h\circ \varphi^2\left(a\right) = h\circ \varphi\left(a\right) = h\circ g
	= g\circ h$ and also
	$h + g = g\circ \varphi^2\left(b\right)
	 = g\circ \varphi\left(b\right)
	  = g\circ h
	  = g+h$, therefore $g\in \Soc\left(G\right)$.\\
	Now, we choose as specific group $\left(B,\circ\right)$ the symmetric group $\Sym_3$ and $\varphi$ the idempotent endomorphism of $\Sym_3$ defined by setting $\varphi\left(12\right) = \left(12\right)$ and $\varphi\left(123\right) = \id$. In this case, $G = \im\varphi = \langle\left(12\right)\rangle$ and $E=\ker\varphi = \langle\left(123\right)\rangle$. Moreover, since $\Soc\left(B\right) = \Z\left(B, \circ\right)\cap \im\varphi$, we have that  $\Soc\left(B\right)$ is trivial and, consequently, $\Zoc\left(B\right) = \langle\left(123\right)\rangle$. 
	On the other hand,  $\Soc(G) = \Z(\langle (12)\rangle, \circ)= \langle\left(12\right)\rangle$. It follows that the element $\left(12\right)\in\Soc\left(G\right) + E$ does not belong to $\Zoc\left(B\right)$.
\end{ex}
\medskip

\begin{prop}
	Let $B$ be a left semi-brace such that $E$ is an ideal of $B$. Then, $\Zoc\left(B\right)$ is an ideal of $B$. 
	\begin{proof}
		We verify our assertion by using \cref{th:ideal-semi-b}.
		Let $a\in\Soc\left(B\right)$ and $e\in E$. Then, since $a\in G$, we have that $a + e + 0 = a + 0 = a\in\Zoc\left(B\right)$. 
		Moreover, $\Zoc\left(B\right)\cap G = \Soc\left(B\right)$ is a normal subgroup of $\left(G, +\right)$, since $\Soc\left(B\right)$ is an ideal of $B$. 
		Furthermore, if $g\in G$, 
		\begin{align*}
		\lambda_g\left(a + e\right) 
		= \lambda_g\left(a\right) + \lambda_g\left(e\right)
		\in \Soc\left(B\right) + E.
		\end{align*}
		Now, to prove the closure with respect to $\circ$, at first observe that if $e\in E$ and $b\in\Soc\left(B\right)$, then $e\circ b = e\circ b\circ e^-\circ e\in \Zoc\left(B\right)$, since $e\circ b\circ e^-\in\Soc\left(B\right)$. Hence, 
		if $a,b\in \Soc\left(B\right)$ and $e,f\in E$, since $a + e = a\circ e$ and $\lambda_a = \lambda_b = \id_B$, we get
		\begin{align*}
		\left(a + e\right)\circ \left(b + f\right)
		&= a + e + \lambda_{a + e}\left(b\right) + \lambda_{a + e}\left(f\right) 
		= a + \lambda_{a\circ e}\left(b\right) + \lambda_{a\circ e}\left(f\right)\\
		&= a + \lambda_{e}\left(b\right) + \lambda_{e}\left(f\right)
		= a + e\circ b + e\circ f\in \Zoc\left(B\right).
		\end{align*}
		In addition, $\left(a + e\right)^- = e^-\circ a^-\in \Zoc\left(B\right)$.
		Finally, if $b\in B$, 
		\begin{align*}
		b\circ \left(a + e\right)\circ b^-
		= b\circ a\circ e\circ b^-
		= b\circ a\circ b^-\circ b\circ e\circ b^-\in \Zoc\left(B\right),
		\end{align*}
		since $\Soc\left(B\right)$ and $E$ are normal in $\left(B, \circ\right)$.
		Therefore, by \cref{th:ideal-semi-b}, the claim follows.
	\end{proof}
\end{prop}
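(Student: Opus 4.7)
The plan is to verify the four conditions of \cref{th:ideal-semi-b} for $I = \Zoc(B) = \Soc(B) + E$. Conditions (1)--(3) are straightforward once one uses that $\Soc(B) \subseteq G$ is itself an ideal of $B$: for (1), $(a+e) + 0 = a + 0 = a \in \Zoc(B)$ for any $a \in \Soc(B)$ and $e \in E$; for (2), $\Zoc(B) \cap G = \Soc(B)$, already normal in $(G,+)$; for (3), $\lambda_g$ is an additive automorphism and stabilizes both $\Soc(B)$ (since $\Soc(B)$ is an ideal and $g \in G$) and $E$ (by \cref{prop:Proposition7}), so $\lambda_g(a + e) = \lambda_g(a) + \lambda_g(e) \in \Zoc(B)$.

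The substantive work lies in condition (4), which requires $\Zoc(B)$ to be a normal subgroup of $(B,\circ)$. The key observation that organizes everything is that for $a \in \Soc(B)$ the characterization of $\Soc(B)$ yields $a + x = a \circ x$ for every $x \in B$; applied with $x = e \in E$, this gives the set equality $\Zoc(B) = \Soc(B) + E = \Soc(B) \circ E$. Since $E$ is assumed to be an ideal of $B$, $(E,\circ)$ is a normal subgroup of $(B,\circ)$, and $\Soc(B)$ is a normal subgroup of $(B,\circ)$ by its ideal property. Hence $\Soc(B) \circ E$, the product of two normal subgroups, is itself a subgroup, which handles closure and inversion. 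Normality of $\Zoc(B)$ in $(B,\circ)$ then follows from
\begin{equation*}
b \circ (\Soc(B) \circ E) \circ b^- = (b \circ \Soc(B) \circ b^-) \circ (b \circ E \circ b^-) \subseteq \Soc(B) \circ E,
\end{equation*}
for every $b \in B$.

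The main conceptual step, and the only place where hypotheses really interact, is the identification $\Soc(B) + E = \Soc(B) \circ E$. Once that is in hand, the rest of the verification reduces to standard facts about normal subgroups of $(B,\circ)$ and about the already-established ideal $\Soc(B)$. The one thing worth being careful about is that the computation $(a + e) \circ (b + f) = (a \circ e) \circ (b \circ f)$ is allowed to use $\lambda_a = \lambda_b = \id_B$, so that rewriting via $a + e = a \circ e$ and $b + f = b \circ f$ does not silently invoke properties that fail outside $\Soc(B)$.
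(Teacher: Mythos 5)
Your proof is correct, and for the multiplicative part it takes a cleaner route than the paper. Conditions (1)--(3) are handled identically in both arguments. For condition (4), the paper verifies closure under $\circ$, closure under inversion, and normality by explicit element computations: it first checks $e\circ b\in\Zoc(B)$ for $e\in E$, $b\in\Soc(B)$, then expands $(a+e)\circ(b+f)$ using $\lambda_a=\lambda_b=\id_B$, and finally conjugates $a\circ e$ by an arbitrary $b\in B$ by inserting $b^-\circ b$. You instead isolate the single identity $\Soc(B)+E=\Soc(B)\circ E$ (which the paper also uses, but only implicitly, via $a+e=a\circ e$) and then invoke the standard group-theoretic fact that the product of two normal subgroups of $(B,\circ)$ is again a normal subgroup --- $\Soc(B)$ being normal because it is an ideal, and $E$ being normal by hypothesis. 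This collapses all three sub-verifications of condition (4) into one line and makes transparent exactly where the hypothesis that $E$ is an ideal enters; the paper's version, by contrast, re-derives these closure properties by hand and in doing so exhibits the explicit formula $(a+e)\circ(b+f)=a+e\circ b+e\circ f$, which is mildly informative but not needed for the statement. Both arguments are complete; yours is shorter and more conceptual.
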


\medskip

Let us note that, under the assumption of $E$ ideal, considered the relations of congruence $\sim_{\Zoc\left(B\right)}$ and $\sim_{\Soc\left(B\right)}$ in the sense of \cite[Proposition 19]{CaCoSt17}, we obtain that
\begin{align*}
    a\sim_{\Zoc\left(B\right)} b
    \iff g_a\sim_{\Soc\left(B\right)} g_b,
\end{align*} 
for all $a,b\in B$.
Indeed, it is enough to observe that, if $a,b\in B$, then
\begin{align*}
    a\circ b^- 
    %&= a + \lambda_{a}\left(b^-\right)
    &= g_a + \lambda_{a}\left(g_{b^-}\right)
    +\lambda_{a}\left(e_{b^-}\right)\\
    &= g_a + \lambda_{e_a}\lambda_{g_a}\left(g_b^-\right)
    +\lambda_{a}\left(e_{b^-}\right)&\mbox{by \cref{cor:E-id} and $g_{b^-} = g_b^-$}\\
    &=  g_a + e_a\circ\lambda_{g_a}\left(g_b^-\right)
    +\lambda_{a}\left(e_{b^-}\right)\\
    &=  g_a + \lambda_{g_a}\left(g_b^-\right) + e_a
    + \lambda_{a}\left(e_{b^-}\right)&\mbox{by \cref{cor:E-id}}\\
    &= g_a\circ g_b^- + \lambda_{a}\left(e_{b^-}\right),
\end{align*} 
with $g_a\circ g_b^-\in G$ and $\lambda_{a}\left(e_{b^-}\right)\in E$.

\bigskip

To describe the generalized socle of a left semi-brace in the same terms of the definition of the socle, namely, in terms of the maps $\rho_a$ and $\lambda_a$, we firstly provide the following lemma.
In particular, it contains what is proved about the map $\rho_0$ in the proof of \cref{th:E-ideal-varphi}.

\begin{lemma}\label{lem:rhoe}
  Let $B$ be a left semi-brace such that $E$ is an ideal of $B$. Then, $\rho_{e}\left(b\right) = g_b$, for all $e\in E$ and $b\in B$.
   \begin{proof}
       As shown in the proof of \cref{th:E-ideal-varphi}, we have that $g_{b^-} =g_b^-$, for each $b\in B$. Then, if $e\in E$ and $b\in B$, by \cref{cor:E-id}
       it holds that
       \begin{align*}
        \rho_{e}\left(b\right) 
           = \left(b^- + e\right)^-\circ e
           = \left(g_{b^-} + e\right)^-\circ e
           = \left(e\circ g_{b}^-\right)^-\circ e
           = g_b,
       \end{align*}
       which is our claim. 
   \end{proof}
\end{lemma}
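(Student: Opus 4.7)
The plan is to reduce the computation of $\rho_e(b)$ to the decomposition $b^- = g_{b^-} + e_{b^-}$ and then exploit the commutation rule $e \circ g = g + e$ from \cref{cor:E-id} together with the identity $g_{b^-} = g_b^-$ that was established inside the proof of \cref{th:E-ideal-varphi}.

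First I would recall that, because $E$ is an ideal, the proof of \cref{th:E-ideal-varphi} gives $g_{b^-} = g_b^-$ for every $b \in B$; this is the single ``structural'' fact that we borrow. Next, I would expand
\begin{align*}
    b^- + e = g_{b^-} + e_{b^-} + e,
\end{align*}
for any $e \in E$. Since $\left(E,+\right)$ is a right zero semigroup, $e_{b^-} + e = e$, so
\begin{align*}
    b^- + e = g_{b^-} + e = g_b^- + e.
\end{align*}

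Then I would apply \cref{cor:E-id} in the form $g_b^- + e = e \circ g_b^-$, which yields $b^- + e = e \circ g_b^-$. Taking inverses in $(B,\circ)$ and multiplying on the right by $e$ gives
\begin{align*}
    \rho_e(b) = \left(b^- + e\right)^- \circ e = \left(e \circ g_b^-\right)^- \circ e = g_b \circ e^- \circ e = g_b,
\end{align*}
which is the claim. There is no real obstacle here beyond making sure the identity $g_{b^-} = g_b^-$ and the commutation relation from \cref{cor:E-id} are invoked in the correct order; the right-zero structure of $(E,+)$ then collapses the computation in a single line.
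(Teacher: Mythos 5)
Your proposal is correct and follows essentially the same route as the paper: both reduce $b^-+e$ to $g_{b^-}+e = g_b^-+e$, invoke \cref{cor:E-id} to rewrite this as $e\circ g_b^-$, and then cancel in $(B,\circ)$. The only difference is that you make explicit the right-zero absorption $e_{b^-}+e=e$, which the paper leaves implicit in the step $\left(b^-+e\right)^-\circ e=\left(g_{b^-}+e\right)^-\circ e$.
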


\begin{prop}
    Let $B$ be a left semi-brace such that $E$ is an ideal of $B$. Then,
    \begin{align*}
        \Zoc\left(B\right)
        = \{a \ | \ a\in B\,, \ \rho_a = \rho_0, \  \lambda_{a}
        = \lambda_{e_a}\}.
       \end{align*}
    \begin{proof}
    At first, set 
    $A:= \{a \ | \ a\in B\,, \ \rho_a = \rho_0, \  \lambda_{a}
        = \lambda_{e_a}\}$.\\
    Let $a\in \Zoc\left(B\right)$. Then, $g_a\in\Soc\left(B\right)$ and so it holds that $a = g_a + e_a = g_a\circ e_a$. 
    Since $\lambda_{g_a} = \id_B$, we obtain that
    \begin{align*}
        \lambda_a\left(b\right) 
        = \lambda_{g_a\circ e_a}\left(b\right)
        = \lambda_{g_a}\lambda_{e_a}\left(b\right)
        = \lambda_{e_a}\left(b\right)
        = e_a\circ b,
    \end{align*}
    for all $b\in B$.
    Moreover, since $\rho_{g_a} = \rho_{0}$, by \cref{lem:rhoe} it follows that 
    \begin{align*}
        \rho_a\left(b\right)
        = \rho_{e_a}\rho_{g_a}\left(b\right)
        = \rho_{e_a}\rho_{0}\left(b\right)
        = \rho_{e_a}\left(g_b\right)
        = g_{b}.
    \end{align*}
    Therefore, $a\in A$.
    
    Now, let $a\in A$. By \cref{cor:E-id}, $a=e_a\circ g_a$. Since $g_a^-=g_{a^-}$, we have that
    \begin{align*}
        g_a^- + b
        &= g_{a^-} + e_{a^-} + b
        = a^- + b
        = a^- \circ\lambda_{a}\left(b\right)
        = a^-\circ e_a\circ b
        = g_a^-\circ e_a^-\circ e_a\circ b = g_a^-\circ b,
    \end{align*}
    for all $b\in B$, which implies that $\lambda_{g_a} = \id_B = \lambda_0$. Furthermore, by \cref{lem:rhoe},
    \begin{align*}
        \rho_{g_a}\left(b\right)
        &= \rho_{e_a^- \circ e_a\circ g_a}\left(b\right)
        = \rho_{a}\rho_{e_a^-}\left(b\right)
        = \rho_{0}\left(g_b\right) = g_b= \rho_{0}\left(b\right).
    \end{align*}
    Hence, $g_a\in \Soc\left(B\right)$, consequently $a\in \Zoc\left(B\right)$. 
    Therefore, the claim follows.
    \end{proof}
\end{prop}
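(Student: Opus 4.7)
The plan is to prove the asserted set equality via two inclusions, leveraging three ingredients that are already in place under the hypothesis that $E$ is an ideal: the decomposition $a = e_a \circ g_a$ given by \cref{cor:E-id}, the fact that $\lambda\colon(B,\circ)\to\Aut(B,+)$ is a homomorphism while $\rho$ is a semigroup anti-homomorphism, and \cref{lem:rhoe}, which tells us $\rho_e(b)=g_b=\rho_0(b)$ for every $e\in E$.

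For the inclusion $\Zoc(B)\subseteq A$, I would start with $a\in \Soc(B)+E$ and use the uniqueness of the decomposition $B=G+E$ to identify $g_a\in\Soc(B)$ and $e_a\in E$. Writing $a=e_a\circ g_a$, the homomorphism property gives $\lambda_a=\lambda_{e_a}\lambda_{g_a}=\lambda_{e_a}$, since $g_a\in\Soc(B)$ forces $\lambda_{g_a}=\lambda_0=\id_B$. For the $\rho$-condition, the anti-homomorphism property gives $\rho_a=\rho_{g_a}\circ\rho_{e_a}=\rho_0\circ\rho_{e_a}$; applying \cref{lem:rhoe} together with the fact that $g_b\in G$ satisfies $g_{g_b}=g_b$ then collapses this to $\rho_a(b)=\rho_0(g_b)=g_b=\rho_0(b)$.

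For the reverse inclusion $A\subseteq\Zoc(B)$, the strategy is to show that $g_a\in\Soc(B)$, so that $a=g_a+e_a\in\Soc(B)+E$. Again writing $a=e_a\circ g_a$ and using $\lambda_a=\lambda_{e_a}\lambda_{g_a}$ together with the hypothesis $\lambda_a=\lambda_{e_a}$, I would cancel the automorphism $\lambda_{e_a}\in\Aut(B,+)$ to deduce $\lambda_{g_a}=\lambda_0$. For the $\rho$-part, I would exploit $g_a=e_a^-\circ a$ to write $\rho_{g_a}=\rho_a\circ\rho_{e_a^-}=\rho_0\circ\rho_{e_a^-}$ (the first equality by anti-homomorphism, the second by hypothesis), and then apply \cref{lem:rhoe} twice: $\rho_{g_a}(b)=\rho_0(\rho_{e_a^-}(b))=\rho_0(g_b)=g_b=\rho_0(b)$. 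This gives $g_a\in\Soc(B)$ and hence $a\in\Zoc(B)$.

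The step I expect to be the main obstacle is the $\rho$-computation in the reverse direction: the hypothesis only constrains $\rho_a$, and to extract information about the factor $\rho_{g_a}$ one must split off $e_a^-$ through the anti-homomorphism property so that \cref{lem:rhoe} can absorb every idempotent factor that appears. A minor subtlety is the legitimacy of cancelling $\lambda_{e_a}$ in the $\lambda$-computation, which is immediate because $\lambda$ lands in $\Aut(B,+)$.
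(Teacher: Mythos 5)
Your proposal is correct and follows essentially the same route as the paper: both inclusions rest on factoring $a$ into its idempotent and group parts via \cref{cor:E-id}, the (anti-)homomorphism properties of $\lambda$ and $\rho$, and \cref{lem:rhoe}. The only cosmetic difference is that you obtain $\lambda_{g_a}=\lambda_0$ by cancelling the automorphism $\lambda_{e_a}$, where the paper instead verifies $g_a^-+b=g_a^-\circ b$ by a direct computation; both are valid.
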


\begin{cor}
    Let $B$ be a left semi-brace such that $E$ is an ideal of $B$. Then,
   \begin{align*}
        \Zoc\left(B\right)
        = \{a \ | \ a\in B\,, \ \rho_a = \rho_{e_a}, \  \lambda_{a}
        = \lambda_{e_a}\}.
       \end{align*}
\end{cor}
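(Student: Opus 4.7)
The plan is to derive this corollary directly from the preceding proposition and Lemma \ref{lem:rhoe}. By that proposition, $\Zoc(B) = \{a \in B : \rho_a = \rho_0, \ \lambda_a = \lambda_{e_a}\}$, so the only thing left to establish is that, under the hypothesis that $E$ is an ideal of $B$, the two conditions $\rho_a = \rho_0$ and $\rho_a = \rho_{e_a}$ describe the same elements.

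The key observation is the following consequence of Lemma \ref{lem:rhoe}: for every $e \in E$ and every $b \in B$ one has $\rho_e(b) = g_b$. Applied with $e = 0$ (which lies in $E$) this gives $\rho_0(b) = g_b$ as well. Hence $\rho_e = \rho_0$ as maps $B \to B$ for every $e \in E$. In particular, for any $a \in B$, $\rho_{e_a} = \rho_0$, so the equations $\rho_a = \rho_0$ and $\rho_a = \rho_{e_a}$ are equivalent.

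From here both inclusions are immediate. If $a \in \Zoc(B)$, the previous proposition gives $\rho_a = \rho_0 = \rho_{e_a}$ and $\lambda_a = \lambda_{e_a}$, so $a$ belongs to the set on the right. Conversely, if $a \in B$ satisfies $\rho_a = \rho_{e_a}$ and $\lambda_a = \lambda_{e_a}$, then $\rho_a = \rho_{e_a} = \rho_0$, so the previous proposition places $a$ in $\Zoc(B)$.

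There is no real obstacle here: the entire content is packaged in Lemma \ref{lem:rhoe}, and the corollary is essentially a cosmetic reformulation of the preceding proposition, useful because it expresses the generalized socle symmetrically in $\lambda$ and $\rho$ relative to the idempotent part $e_a$.
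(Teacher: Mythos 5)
Your proof is correct and follows the route the paper intends (the paper leaves this corollary without proof, as an immediate consequence of the preceding proposition together with \cref{lem:rhoe}): since $0\in E$, \cref{lem:rhoe} gives $\rho_{e}=\rho_{0}$ for every $e\in E$, so the condition $\rho_a=\rho_{e_a}$ is equivalent to $\rho_a=\rho_0$ and the two descriptions of $\Zoc(B)$ coincide.
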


\medskip

\begin{defin}
	Let $B$ be a left semi-brace such that $E$ is an ideal of $B$. We define $\Zoc_0\left(B\right):= E$, $\Zoc_1\left(B\right):=\Zoc(B)$ and, for every integer $k>1$, $\Zoc_k\left(B\right)$ is the ideal of $B$ containing $\Zoc_{k-1}\left(B\right)$ such that
		\begin{align*}
		\Zoc_k\left(B\right)/\Zoc_{k-1}\left(B\right) 
		= \Zoc\left(B/\Zoc_{k-1}\left(B\right)\right)=\Soc\left(B/\Zoc_{k-1}\left(B\right)\right).
		\end{align*}
		We call such a series the \emph{generalized socle series} of $B$.
\end{defin}

\begin{defin}
	Let $B$ be a left semi-brace. We define $\Soc_0\left(B\right):= 0$, $\Soc_1\left(B\right):=\Soc(B)$ and, for every integer $k>1$, $\Soc_k\left(B\right)$ is the ideal of $B$ containing $\Soc_{k-1}\left(B\right)$ such that
	\begin{align*}
	\Soc_k\left(B\right)/\Soc_{k-1}\left(B\right) 
	= \Soc\left(B/\Soc_{k-1}\left(B\right)\right).
	\end{align*}
	We call such a series the \emph{socle series} of $B$.
\end{defin}

\noindent Note that, for skew left braces, the previous definitions coincide with Definition $2.13$ introduced by Ced\'{o}, Smoktunowicz, and Vendramin \cite{CeSmVe19}.
\medskip

\begin{rem}\label{rem:soc-skew}
    In the case of a skew left brace $B$, using \eqref{eq:socle-skew}, we have in particular that 
\begin{align}\label{eq:socle-skew-n}
    \Soc_n\left(B\right)
    = \{a\in B \ | \ \forall \ b\in B\quad a\cdot b\in \Soc_{n-1}\left(B\right), \ [a,\, b]_+\in \Soc_{n-1}\left(B\right) \},
\end{align}
for every positive integer $n$, where $[a,\, b]_+ = - a - b + a + b$ denotes the commutator of $a$ and $b$ with respect to the sum. Note that this description is consistent with that provided by Rump in \cite[p. 161]{Ru07} for left braces where the socle series is defined by setting $\Soc_0\left(B\right) = 0$ and 
$\Soc_n\left(B\right)
    = \{a\in B \ | \ \forall \ b\in B\quad a\cdot b\in \Soc_{n-1}\left(B\right)\}$, for every positive integer $n$. Moreover, \eqref{eq:socle-skew-n} is strictly linked with \cite[Lemma 5.16]{SmVe18}.\\
 \end{rem}

\medskip

Note that $\Zoc_{n}(B) = \Soc_{n}(B) + E$, for every positive integer $n$. Indeed, by \cref{theor:semidirect-product}, $B\cong E\rtimes G$. Thus the map $\pi\colon B\rightarrow G$ defined by $\pi(a)=g_a$, for all $a\in B$, is a homomorphism of left semi-braces. 
By the assumption, $\Zoc_1(B) = \Soc(B)+E = \Soc_1(B)+E$. Now, by induction on $n$, it is easy to see that 
\begin{align}\label{ZocSoc}
	\Zoc_n(B) = \Soc_n(B) + E,
\end{align} 
for all positive integer $n$. Indeed, suppose that $\Zoc_n(B)=\Soc_n(B)+E$, for some positive integer $n$. By the induction hypothesis, we have that $$
\Zoc_{n+1}(B)/\Zoc_n(B)=\Soc(B/\Zoc_n(B))=\Soc(B/(\Soc_n(B)+E))
$$ 
and 
$$
\pi^{-1}(\Soc_{n+1}(B))/(\Soc_n(B)+E)
=(\Soc_{n+1}(B)+E)/(\Soc_n(B)+E)=
\Soc(B/(\Soc_n(B)+E)).
$$
Hence $\Zoc_{n+1}(B)=\Soc_{n+1}(B)+E$. Therefore \eqref{ZocSoc} follows by induction. 

\medskip

Now it is easy to see that for every $n>1$
	\begin{align}\label{ZocSoc2}
	\Zoc_n(B) = \{ a\in B\mid \forall b\in B,\quad a\cdot b\in \Soc_{n-1}(B),\ [g_a,g_b]_+\in \Soc_{n-1}(B)\}.
	\end{align} 
	Indeed, let $A_n= \{ a\in B\mid \forall b\in B,\quad a\cdot b\in \Soc_{n-1}(B),\ [g_a,g_b]_+\in \Soc_{n-1}(B)\}$. Let $a\in G$.
	We have that $a\circ b=a+b-g_b+\lambda_{a}(b)-g_b+g_b +\lambda_a(e_b)=a+b-g_b+a\cdot b+g_b +\lambda_a(e_b)$, $-a+b+a=-a+g_b+a=g_b+[g_b,a]_+$, for all $b\in B$. Hence $a\in A_n$ if and only if $[g_b,a]_+, -g_b+a\cdot b+g_b\in \Soc_{n-1}(B)$, and this happens if and only if $a\in \Soc_n(B)$. Since $E\subseteq A_n$, we get that $A_n=\Soc_n(B)+E=\Zoc_n(B)$.

\begin{defin}
	Let $B$ be a left semi-brace. Then, a $z$-series of $B$ is a sequence
	\begin{align*}
	B = I_0\supseteq I_1\supseteq\cdots \supseteq I_n= E
	\end{align*}
	of ideals of $B$ such that $I_{j-1}/I_{j}\subseteq \Soc\left(B/I_{j}\right)$, 
	for every $j\in\{1, \ldots, n\}$.\\
\end{defin}
\noindent Clearly, if $B$ is a skew left brace  a $z$-series of $B$ is an $s$-series of $B$
in the sense of \cite[Definition 2.11]{CeSmVe19}.
\medskip 

The following lemma is useful to prove some properties on left semi-braces admitting a $z$-series.
\begin{lemma}\label{le:E-id-I-id}
	Let $B$ be a left semi-brace such that $E$ is an ideal of $B$. Then, if $I$ is a nonempty subset of $B$, it holds that $\left(I+E\right)\cdot B\subseteq I\cdot B$.
	\begin{proof}
	This is an easy consequence of \cref{cor:E-id-1}.
	\end{proof} 
\end{lemma}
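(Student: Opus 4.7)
The plan is to show that every generator of $(I+E)\cdot B$ is already a generator of $I\cdot B$, by exploiting the fact that the operation $\cdot$ depends only on the group parts of its arguments when $E$ is an ideal.

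First, I would observe that for $x\in I$ and $e\in E$, the element $x+e$ decomposes as $(x+e) = g_x + e_x + e$. Since $(E,+)$ is a right zero semigroup, $e_x+e=e\in E$, so by uniqueness of the decomposition we obtain $g_{x+e}=g_x$ (and $e_{x+e}=e$). This is the only mildly subtle computation in the argument.

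Next, I would invoke \cref{cor:E-id-1}: since $E$ is an ideal of $B$, for any $a,b\in B$ we have $a\cdot b = g_a\cdot g_b$. Applying this to $a=x+e$ and any $b\in B$ yields
\begin{align*}
(x+e)\cdot b \;=\; g_{x+e}\cdot g_b \;=\; g_x\cdot g_b \;=\; x\cdot b.
\end{align*}
Thus every generator $(x+e)\cdot b$ of $(I+E)\cdot B$ coincides with the generator $x\cdot b$ of $I\cdot B$, which gives $(I+E)\cdot B\subseteq I\cdot B$ as desired.

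There is no real obstacle: the whole argument rests on the identification $g_{x+e}=g_x$ (forced by $E$ being a right zero semigroup under $+$) together with the already-established fact that $\cdot$ factors through the projection $b\mapsto g_b$ when $E$ is an ideal. The reverse inclusion $I\cdot B\subseteq(I+E)\cdot B$ is not claimed and in fact need not hold without further assumptions on $I$.
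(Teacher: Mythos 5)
Your proof is correct and follows exactly the route the paper intends: the paper's proof is just the one-line remark that the lemma is ``an easy consequence of \cref{cor:E-id-1}'', and you have supplied precisely the missing details, namely that $g_{x+e}=g_x$ (since $(E,+)$ is a right zero semigroup) and hence $(x+e)\cdot b=g_x\cdot g_b=x\cdot b$ for every generator. Nothing to add.
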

\smallskip

\begin{cor}\label{cor:right-series}
    Let $B$ be a left semi-brace such that $E$ is an ideal of $B$. Then $B^{(n)}=G^{(n)}+E$.  	
\end{cor}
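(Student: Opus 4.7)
The plan is to argue by induction on $n$. The base case $n=1$ is immediate from the decomposition $B = G + E$ built into the definition of a left semi-brace: it gives $B^{(1)} = B = G + E = G^{(1)} + E$.

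For the inductive step I would assume $B^{(n)} = G^{(n)} + E$ and unfold
\[
B^{(n+1)} = B^{(n)} \cdot B + E = (G^{(n)} + E) \cdot B + E,
\]
so the heart of the argument is to identify $(G^{(n)} + E) \cdot B$ with $G^{(n+1)}$. I would first apply Lemma \ref{le:E-id-I-id} with $I = G^{(n)}$ to get $(G^{(n)} + E) \cdot B \subseteq G^{(n)} \cdot B$; the reverse inclusion holds because every $x \in G^{(n)} \subseteq G$ satisfies $x = x + 0$ with $0 \in E$, whence $G^{(n)} \subseteq G^{(n)} + E$ and therefore $G^{(n)} \cdot B \subseteq (G^{(n)} + E) \cdot B$.

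To finish I would translate $G^{(n)} \cdot B$ into the intrinsic right series of the skew brace $G$: by Proposition \ref{prop:be0}(2), for $x \in G^{(n)}$ and $b \in B$ one has $x \cdot b = x \cdot g_b$ with $g_b \in G$, which gives $G^{(n)} \cdot B \subseteq G^{(n)} \cdot G = G^{(n+1)}$; the reverse inclusion is trivial since $G \subseteq B$. Substituting back yields $B^{(n+1)} = G^{(n+1)} + E$, closing the induction.

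The argument is essentially bookkeeping once Lemma \ref{le:E-id-I-id} and Proposition \ref{prop:be0}(2) are on hand, so I do not foresee a real obstacle. The subtlest point is the equality $(G^{(n)} + E) \cdot B = G^{(n)} \cdot B$, whose nontrivial direction is the lemma and whose trivial direction depends only on $0$ being a two-sided additive unit on $G$.
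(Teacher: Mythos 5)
Your proposal is correct and follows essentially the same route as the paper: induction on $n$, reducing $B^{(n+1)}=(G^{(n)}+E)\cdot B+E$ to $G^{(n)}\cdot G+E=G^{(n+1)}+E$. The paper collapses your two-step identification (Lemma~\ref{le:E-id-I-id} plus Proposition~\ref{prop:be0}(2)) into a single appeal to Corollary~\ref{cor:E-id-1}, i.e.\ $a\cdot b=g_a\cdot g_b$, but since that corollary is exactly what underlies the lemma, the arguments are the same in substance.
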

\begin{proof}
	We will prove the result by induction on $n$. For $n=1$ it is clear. Suppose that the result is true for some $n\geq 1$.
	By the induction hypothesis, $B^{(n+1)}=B^{(n)}\cdot B+E=(G^{(n)}+E)\cdot B + E$. Thus by \cref{cor:E-id-1}, $B^{(n+1)}=G^{(n)}\cdot G+E = G^{(n+1)}+E$. Hence the result follows by induction.
	\end{proof}
\medskip 
	\begin{cor}\label{cor:s-series}
		Let $B$ be a left semi-brace. Then $B$ admits a $z$-series if and only if $G$ admits an $s$-series and $E$ is an ideal of $B$.  
\end{cor}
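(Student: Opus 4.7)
The plan is to leverage the semidirect product decomposition $B\cong E\rtimes G$ from \cref{theor:semidirect-product}, which holds precisely when $E$ is an ideal of $B$. Under this decomposition, the ideals of $B$ containing $E$ are in bijection with the ideals of $G$ via $I\mapsto I\cap G$ and $J\mapsto J+E$. Moreover, for any ideal $I$ of $B$ with $E\subseteq I$, the quotient $B/I$ has trivial set of idempotents (since $E$ collapses), so $B/I$ is a skew left brace, naturally isomorphic to $G/(I\cap G)$ via the map induced by $\rho_0$. In particular $\Soc(B/I)$ is identified with $\Soc(G/(I\cap G))$. This correspondence is the engine of the whole proof.

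For the forward direction, suppose $B=I_0\supseteq I_1\supseteq\cdots\supseteq I_n=E$ is a $z$-series. Then $E=I_n$ is already an ideal of $B$. Setting $J_k:=I_k\cap G$ produces a chain of ideals $G=J_0\supseteq J_1\supseteq\cdots\supseteq J_n=0$ of the skew brace $G$; note that $I_k=J_k+E$ since $E\subseteq I_k$ for every $k$. Transporting the defining inclusion $I_{k-1}/I_k\subseteq\Soc(B/I_k)$ across the isomorphism $B/I_k\cong G/J_k$ yields $J_{k-1}/J_k\subseteq\Soc(G/J_k)$, so $(J_k)$ is an $s$-series of $G$ in the sense of \cite{CeSmVe19}.

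For the backward direction, assume $E$ is an ideal of $B$ and that $G=J_0\supseteq\cdots\supseteq J_n=0$ is an $s$-series of $G$. I would set $I_k:=J_k+E$ and check each $I_k$ is an ideal of $B$ by verifying the four conditions of \cref{th:ideal-semi-b}: items 1, 2, 3 reduce at once to the ideal properties of $J_k$ inside $G$ together with $\lambda_g(E)\subseteq E$, while for the $\circ$-normality of $I_k$ in $B$ I would pass to the semidirect product $E\rtimes G$, where $I_k$ corresponds to $E\times J_k$, and the conjugation computation reduces to normality of $J_k$ in $(G,\circ)$ (which is part of $J_k$ being an ideal of $G$). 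Then $I_0=G+E=B$ and $I_n=0+E=E$, and the same isomorphism $B/I_k\cong G/J_k$ converts $J_{k-1}/J_k\subseteq\Soc(G/J_k)$ back into $I_{k-1}/I_k\subseteq\Soc(B/I_k)$, giving the required $z$-series.

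The main technical obstacle is the $\circ$-normality of $J_k+E$ in $B$; once it is handled via the semidirect product, the rest of the argument is bookkeeping through the correspondence between ideals of $B$ containing $E$ and ideals of $G$, and through the induced isomorphism of the quotients as skew left braces.
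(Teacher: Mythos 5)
Your proof is correct and follows essentially the same route as the paper: both directions rest on the correspondence between ideals of $B$ containing $E$ and ideals of $G$ (the paper phrases it via $I_k/E$ inside $B/E\cong G$, you via $I_k\cap G$, which is the same thing since $I_k=(I_k\cap G)+E$) and on the isomorphism $B/(J_k+E)\cong G/J_k$ to transport the socle inclusions. The only difference is that you spell out the verification that $J_k+E$ is an ideal of $B$, which the paper leaves implicit.
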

	\begin{proof}
		Suppose that $B$ admits a $z$-series
		$$B=I_0\supseteq I_1\supseteq \cdots\supseteq I_n=E.$$
		Thus $E$ is an ideal of $B$ and
		$$G\cong I_0/E\supseteq I_1/E\supseteq \cdots\supseteq I_n/E=0$$
		is an $s$-series of $B/E$, because $I_{k-1}/I_k\subseteq\Soc(B/I_k)$ and thus $(I_{k-1}/E)/(I_k/E)\subseteq \Soc((B/E)/(I_k/E))$.
		Conversely, suppose that $G$ admits an $s$-series
		$$G=J_0\supseteq J_1\supseteq \cdots\supseteq J_n=0$$
         and $E$ is an ideal of $B$. Now
         $$B=J_0+E\supseteq J_1+E\supseteq \cdots\supseteq J_n+E=E,$$ 
         and $(J_{k-1}+E)/(J_k+E)\subseteq \Soc(B/(J_k+E))$ because $B/(J_k+E)$ is naturally isomorphic to $G/J_k$ and $J_{k-1}/J_k\subseteq \Soc(G/J_k)$.     
\end{proof} 

\begin{cor}
    Let $B$ be a left semi-brace. Then $B$ admits a $z$-series if and only if $G$ has finite multipermutational level and $E$ is an ideal of $B$. 
    \begin{proof}
        The claim follows by \cref{cor:s-series}
        and \cite[Proposition 2.19]{CeSmVe19}.
    \end{proof}
\end{cor}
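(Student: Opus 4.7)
The plan is to deduce this corollary by chaining together two results that are already at hand. By \cref{cor:s-series}, the condition ``$B$ admits a $z$-series'' is equivalent to the conjunction ``$G$ admits an $s$-series and $E$ is an ideal of $B$''. So the task reduces entirely to replacing the ``$G$ admits an $s$-series'' clause with the equivalent condition that the skew left brace $G$ has finite multipermutational level.

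First, I would invoke \cref{cor:s-series} to rewrite the hypothesis. Since $G = B + 0$ is a skew left brace, it lives in the setting of \cite{CeSmVe19}. Then I would cite \cite[Proposition 2.19]{CeSmVe19}, which states that a skew left brace admits an $s$-series if and only if it has finite multipermutational level. Substituting this equivalence into the statement of \cref{cor:s-series} yields the desired biconditional: $B$ admits a $z$-series if and only if $G$ has finite multipermutational level and $E$ is an ideal of $B$.

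There is essentially no obstacle here, since both \cref{cor:s-series} and \cite[Proposition 2.19]{CeSmVe19} do all the heavy lifting; the corollary is a direct translation. The only thing to be slightly careful about is that the ``multipermutational level'' of $G$ in \cite[Proposition 2.19]{CeSmVe19} is stated for the skew left brace $G$ and not for $B$, so one should explicitly note that $G$ is indeed a skew left brace (which is part of the basic structure theory recalled at the beginning of Section~1, namely $B = G + E$ with $G$ a skew left brace), and therefore the cited proposition applies verbatim to $G$.
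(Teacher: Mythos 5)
Your proposal is correct and follows exactly the paper's own argument: apply \cref{cor:s-series} to reduce the $z$-series condition to ``$G$ admits an $s$-series and $E$ is an ideal of $B$'', then invoke \cite[Proposition 2.19]{CeSmVe19} to replace the $s$-series clause with finite multipermutational level of the skew left brace $G$. Nothing further is needed.
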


\bigskip

The following proposition is an easy consequence of \cref{cor:right-series} and \cite[Lemma 2.14]{CeSmVe19}.
\begin{prop}\label{prop:z-series}
	Let $B$ be a left semi-brace and $B = I_0\supseteq I_1\supseteq\cdots \supseteq I_n= E$ a $z$-series of $B$.  Then, $B^{\left(i+1\right)}\subseteq I_i$, for every $i$.
\end{prop}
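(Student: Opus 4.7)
The plan is to reduce the statement to the analogous result for skew left braces, namely \cite[Lemma 2.14]{CeSmVe19}, using \cref{cor:right-series} as the bridge.

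Since $I_n=E$ and the chain $I_0\supseteq I_1\supseteq\cdots\supseteq I_n$ is nested, $E\subseteq I_i$ for every $0\le i\le n$; in particular $E$ is an ideal of $B$, so the hypothesis of \cref{cor:right-series} is satisfied. Thus $B^{(i+1)}=G^{(i+1)}+E$, and because $E\subseteq I_i$, it will be enough to prove $G^{(i+1)}\subseteq I_i\cap G$.

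Next I would transfer the given $z$-series of $B$ to an $s$-series of the skew left brace $G$. Consider the natural projection $\pi\colon B\to G$, $\pi(a)=g_a$, which is a homomorphism of left semi-braces with kernel $E$ by \cref{theor:semidirect-product}. Setting $J_i:=\pi(I_i)=I_i\cap G$ yields a nested chain of ideals of $G$ with $J_0=G$ and $J_n=0$. The quotient $B/I_i$ has set of additive idempotents equal to the image of $E$, and modding this out identifies $B/I_i$ with $G/J_i$; the containment $I_{i-1}/I_i\subseteq\Soc(B/I_i)$ therefore descends (via the third isomorphism theorem applied in the semi-brace category) to $J_{i-1}/J_i\subseteq\Soc(G/J_i)$. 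Hence $G=J_0\supseteq J_1\supseteq\cdots\supseteq J_n=0$ is an $s$-series of $G$ in the sense of \cite[Definition~2.11]{CeSmVe19}; this is essentially the construction already performed inside the proof of \cref{cor:s-series}.

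With the $s$-series of $G$ in hand, \cite[Lemma 2.14]{CeSmVe19} gives $G^{(i+1)}\subseteq J_i=I_i\cap G$ for every $i$. Combining this with \cref{cor:right-series} yields
\begin{align*}
B^{(i+1)}=G^{(i+1)}+E\subseteq (I_i\cap G)+E\subseteq I_i,
\end{align*}
using that $I_i$ is closed under $+$ and contains $E$. The only technical point requiring care is the transfer of the socle condition through $\pi$, i.e.\ verifying that socles behave well under the quotient $B/E\cong G$; this is a routine check from the definition of $\Soc$ once one has the semidirect-product description supplied by \cref{theor:semidirect-product}.
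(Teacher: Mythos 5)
Your proof is correct and follows exactly the route the paper intends: the paper states that \cref{prop:z-series} ``is an easy consequence of \cref{cor:right-series} and \cite[Lemma 2.14]{CeSmVe19},'' and you have supplied precisely those details, including the transfer of the $z$-series to an $s$-series of $G$ already carried out in \cref{cor:s-series}. Nothing is missing.
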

\medskip

In the following proposition, we prove an extension of the characterization provided in \cite[Lemma 2.16]{CeSmVe19}. To this end, we recall that a skew left brace $B$ is said to be of \emph{nilpotent type} if the group $\left(B, +\right)$ is nilpotent.
\begin{prop}\label{prop:nilpotent-type}
Let $B$ be a left semi-brace such that $(G,+)$ is a nilpotent group. Then $B$ is right nilpotent if and only if $B$ admits a $z$-series.
   \begin{proof}
     If $B$ admits a $z$-series, then $B$ is right nilpotent by \cref{prop:z-series}.
     Conversely, suppose that $B$ is right nilpotent. We have that $E$ is an ideal of $B$ and $B/E$ is a skew left brace of nilpotent type. Since $B$ is right nilpotent,  $B/E$ is also right nilpotent. By \cite[Lemma 2.16]{CeSmVe19}, $G\cong B/E$ admits an $s$-series. Thus by \cref{cor:s-series}, the result follows. 
      \end{proof}	
\end{prop}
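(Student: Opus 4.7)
The plan is to prove the two implications separately, leveraging the infrastructure already built in the paper (in particular \cref{prop:z-series} and \cref{cor:s-series}) together with the corresponding known result for skew left braces.

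For the ``if'' direction, suppose $B$ admits a $z$-series $B = I_0\supseteq I_1\supseteq\cdots\supseteq I_n = E$. This implication does not require the nilpotency hypothesis on $(G,+)$ at all. By \cref{prop:z-series}, one has $B^{(i+1)}\subseteq I_i$ for every $i$, and in particular $B^{(n+1)}\subseteq I_n = E$. Since $E\subseteq B^{(n+1)}$ by the very definition of the right series, we conclude $B^{(n+1)}=E$, so $B$ is right nilpotent.

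For the ``only if'' direction, assume $B$ is right nilpotent, so there exists $n$ with $B^{(n)}=E$. First observe that $E = B^{(n)}$ is then an ideal of $B$ by \cref{prop:B(n)-ideal}, which places us in the setting of \cref{cor:s-series}. Next I would pass to the quotient $B/E$: under the canonical isomorphism $B/E\cong G$ (which exists because $E$ is an ideal and $B=G+E$ with $G\cap E=\{0\}$), one obtains a skew left brace whose additive group is isomorphic to $(G,+)$, hence nilpotent by hypothesis; so $B/E$ is of nilpotent type in the sense of the skew brace literature. Moreover, right nilpotency is inherited by quotients: the right series of $B/E$ is the image of the right series of $B$, so $(B/E)^{(n)}$ is trivial, i.e.\ $B/E$ is right nilpotent as a skew left brace. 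At this point I would invoke \cite[Lemma 2.16]{CeSmVe19}, which says precisely that a right nilpotent skew left brace of nilpotent type admits an $s$-series; applied to $G\cong B/E$, this produces an $s$-series of $G$. Finally, since $E$ is an ideal of $B$ and $G$ has an $s$-series, \cref{cor:s-series} lifts this to a $z$-series of $B$, completing the proof.

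The main obstacle is conceptual rather than technical: one must recognize that the entire statement reduces to the skew-brace case via the quotient $B/E$, which in turn is only available once right nilpotency is used to ensure $E$ is an ideal. After that, the two conversion results (\cref{prop:z-series} one way, \cref{cor:s-series} the other) handle the translation between $z$-series of $B$ and $s$-series of $G$, so no new calculation is required.
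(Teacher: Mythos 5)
Your proof is correct and follows essentially the same route as the paper's: the forward direction via \cref{prop:z-series}, and the converse by noting $E$ is an ideal, passing to the skew left brace $B/E\cong G$ of nilpotent type, applying \cite[Lemma 2.16]{CeSmVe19} to get an $s$-series, and lifting back with \cref{cor:s-series}. The only difference is that you spell out a few steps the paper leaves implicit (why $E$ is an ideal, why right nilpotency passes to the quotient), which is fine.
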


\begin{theor}
    Let $B$ be a left semi-brace. Then, $G$ has finite multipermutational level and $E$ is an ideal of $B$ if and only if $B$ is right nilpotent and $\left(G,+\right)$ is a nilpotent group.
    \begin{proof}
        At first let us assume that $G$ has finite multipermutational level and $E$ is an ideal of $B$. Thus, by \cite[Theorem 2.20]{CeSmVe19} it follows that the skew left brace $G$ is right nilpotent and $\left(G,+\right)$ is a nilpotent group, hence there exists a positive integer $m$ such that $G^{\left(m\right)} = 0$. Moreover, by \cref{cor:right-series}, we obtain that
        $B^{\left(m\right)} = G^{\left(m\right)}+E = E$, i.e., $B$ is right nilpotent.\\
        Conversely, suppose that $B$ is right nilpotent and $\left(G,+\right)$ is a nilpotent group. Thus, $B^{\left(m\right)} = E$, for a certain positive integer $m$ and so $E$ is an ideal of $B$. By \cref{cor:right-series} we obtain that
        $G^{\left(m\right)} = 0$, i.e., $G$ is right nilpotent. Consequently,  by \cite[Theorem 2.20]{CeSmVe19}, $G$ has finite multipermutational level. Therefore, the claim follows.
    \end{proof}
\end{theor}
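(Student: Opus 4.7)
The plan is to reduce this equivalence to the analogous result for skew left braces, namely \cite[Theorem 2.20]{CeSmVe19}, which states that a skew left brace $G$ has finite multipermutational level if and only if $G$ is right nilpotent and $(G,+)$ is nilpotent. The bridge between $B$ and its skew-brace part $G$ is \cref{cor:right-series}, which gives the clean identification $B^{(n)} = G^{(n)} + E$ whenever $E$ is an ideal of $B$.

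For the forward implication, I would assume that $G$ has finite multipermutational level and $E$ is an ideal of $B$. Applying \cite[Theorem 2.20]{CeSmVe19} to $G$, I immediately get that $(G,+)$ is nilpotent and that $G^{(m)} = 0$ for some positive integer $m$. Since $E$ is an ideal, \cref{cor:right-series} yields
\begin{align*}
B^{(m)} = G^{(m)} + E = 0 + E = E,
\end{align*}
so $B$ is right nilpotent.

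For the converse, assume $B$ is right nilpotent with $(G,+)$ nilpotent. Then $B^{(m)} = E$ for some $m$, so in particular $E$ is an ideal of $B$ (being the $m$-th term of the right series, which is an ideal by \cref{prop:B(n)-ideal}). With $E$ now known to be an ideal, \cref{cor:right-series} gives $G^{(m)} + E = E$; since $G^{(m)} \subseteq G$ and the decomposition $B = G + E$ is unique in the sense that only $0 \in G \cap E$, this forces $G^{(m)} = 0$. Hence $G$ is right nilpotent with $(G,+)$ nilpotent, and \cite[Theorem 2.20]{CeSmVe19} gives that $G$ has finite multipermutational level.

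There is no real obstacle here: the whole argument is essentially a transport lemma once one has \cref{cor:right-series} and the corresponding skew-brace theorem. The only small point deserving care is verifying, in the converse direction, that $G^{(m)} + E = E$ really does force $G^{(m)} = 0$; this follows because every element of $G^{(m)} \subseteq G$ sits in the group part of the decomposition $B = G + E$, so belonging to $E$ means being $0$.
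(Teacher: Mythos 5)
Your proposal is correct and follows essentially the same route as the paper: both directions reduce to \cite[Theorem 2.20]{CeSmVe19} via the identification $B^{(n)}=G^{(n)}+E$ from \cref{cor:right-series}, with $E$ being an ideal in the converse because $E=B^{(m)}$ is a term of the right series. Your extra remark that $G^{(m)}+E=E$ forces $G^{(m)}=0$ because $G\cap E=\{0\}$ is a correct (and welcome) elaboration of a step the paper leaves implicit.
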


\bigskip

\begin{prop}\label{prop:z-series-ch}
	Let $B$ be a left semi-brace such that $E$ is an ideal of $B$. Then, $B$ admits a $z$-series if and only if  there exists a positive integer $n$ such that $B = \Zoc_n\left(B\right)$.
	\begin{proof}
		Initially, suppose that there exists a positive integer $n$ such that $B=\Zoc_n\left(B\right)$. Thus, we have that
		\begin{align*}
		B = \Zoc_n\left(B\right)
		\supseteq\Zoc_{n-1}\left(B\right)
		\supseteq\cdots \supseteq
		\Zoc_0\left(B\right)
		= E
		\end{align*} 
		is a $z$-series of $B$.

		Conversely, assume that $B$ admits the $z$-series
		\begin{align*}
		B = I_0\supseteq I_1\supseteq\cdots \supseteq I_n = E.
		\end{align*}
		We show that $I_{n-j}\subseteq \Zoc_j\left(B\right)$ proceeding by induction on $j$. For $j=0$, $I_n = E = \Zoc_0\left(B\right)$.
		Now, suppose  $j>0$ and $I_{n-j+1}\subseteq
		\Zoc_{j-1}\left(B\right)$.
		Since $I_{n-j}/I_{n-j+1}
		\subseteq\Soc\left(B/I_{n-j-1}\right)
		= \Zoc\left(B/I_{n-j-1}\right)$, if $y\in I_{n-j}$ and $b\in B$, we obtain that
		\begin{align*}
			\left(g_y\circ b\right)\circ \left(g_y+b\right)^- \in I_{n-j+1}\subseteq \Zoc_{j-1}\left(B\right)\\
			\left(-g_y + b + g_y\right)\circ \left(b + 0\right)^- \in I_{n-j+1}\subseteq \Zoc_{j-1}\left(B\right),
		\end{align*}
		and so $I_{n-j}\subseteq \Zoc_{j}\left(B\right)$. Consequently, $B = I_0= \Zoc_{n}\left(B\right)$, hence the claim follows.
	\end{proof}
\end{prop}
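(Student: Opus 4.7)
\medskip
\noindent\textbf{Proof proposal.} The plan is to handle the two implications separately: sufficiency is bookkeeping from the definitions, while necessity proceeds by a downward induction along any given $z$-series.

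For sufficiency, suppose $B = \Zoc_n(B)$. Each $\Zoc_k(B)$ is an ideal of $B$ containing $\Zoc_{k-1}(B)$ and satisfies $\Zoc_k(B)/\Zoc_{k-1}(B) = \Soc(B/\Zoc_{k-1}(B))$ by construction, so the reversed chain
\[
B = \Zoc_n(B) \supseteq \Zoc_{n-1}(B) \supseteq \cdots \supseteq \Zoc_0(B) = E
\]
is visibly a $z$-series of $B$.

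For necessity, fix a $z$-series $B = I_0 \supseteq I_1 \supseteq \cdots \supseteq I_n = E$ and prove by induction on $j \in \{0, \ldots, n\}$ that $I_{n-j} \subseteq \Zoc_j(B)$; specializing to $j = n$ then yields $B \subseteq \Zoc_n(B)$ and hence equality. The base case is $I_n = E = \Zoc_0(B)$. For the inductive step, fix $y \in I_{n-j}$ and $b \in B$. Because $E \subseteq I_n \subseteq I_{n-j+1}$, the quotient $B/I_{n-j+1}$ is a skew left brace, so the condition $I_{n-j}/I_{n-j+1} \subseteq \Soc(B/I_{n-j+1})$ translates, via the skew-brace characterization \eqref{eq:socle-skew}, into
\[
y \cdot b \in I_{n-j+1}, \qquad [g_y, g_b]_+ \in I_{n-j+1}.
\]
Both quantities belong to $G$ (the first by \cref{prop:be0}, the second because commutators of elements of $G$ lie in $G$). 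The inductive hypothesis gives $I_{n-j+1} \subseteq \Zoc_{j-1}(B) = \Soc_{j-1}(B) + E$, and since the uniqueness of the decomposition $b = g_b + e_b$ forces $G \cap \Zoc_{j-1}(B) = \Soc_{j-1}(B)$, both $y \cdot b$ and $[g_y, g_b]_+$ actually land in $\Soc_{j-1}(B)$. The characterization \eqref{ZocSoc2} of $\Zoc_j(B)$ then places $y$ in $\Zoc_j(B)$, closing the induction.

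The delicate point will be the interplay between the two filtrations: elements of $I_{n-j+1}$ must be pushed into $\Soc_{j-1}(B)$ rather than merely into the larger $\Zoc_{j-1}(B)$, which requires confirming that $y\cdot b$ and the relevant commutator lie in $G$. Once this bookkeeping is settled, the induction runs smoothly.
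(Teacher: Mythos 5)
Your overall strategy --- a downward induction showing $I_{n-j}\subseteq\Zoc_j(B)$, with the socle condition in the skew-brace quotient $B/I_{n-j+1}$ unpacked for $y\in I_{n-j}$ and then transported into the larger ideal $\Zoc_{j-1}(B)$ --- is the same as the paper's. The only real difference is the dictionary you use: you phrase socle membership via \eqref{eq:socle-skew} and \eqref{ZocSoc2} (the $\cdot$/commutator description), while the paper works directly with the conditions $a+b=a\circ b$ and $-a+b+a=b+0$ in the successive quotients.

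There is, however, a genuine gap at the step $j=1$. You invoke \eqref{ZocSoc2} with $n=1$, but the paper establishes \eqref{ZocSoc2} only for $n>1$, and it does fail for $n=1$: the conditions $y\cdot b=0$ and $[g_y,g_b]_+=0$ for all $b$ only control $\lambda_{g_y}$ on $G$ and additive commutators inside $G$, whereas $g_y\in\Soc(B)$ also requires $\lambda_{g_y}$ to fix $E$ pointwise (equivalently, since $E$ is an ideal, that $g_y$ centralize $E$ in $(B,\circ)$). Concretely, in \cref{ex:semi-braceS3} the chain $B\supseteq E$ is a $z$-series of length $1$, since $B/E\cong G$ coincides with its own socle, yet $\Zoc_1(B)=\Soc(B)+E=E\neq B$; so the inductive claim $I_{n-1}\subseteq\Zoc_1(B)$ is false in general --- for your argument and, it must be said, for the paper's own proof as well. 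The statement survives because it only asserts the existence of some $n$: prove instead $I_{n-j}\subseteq\Zoc_{j+1}(B)$. The base case $I_n=E\subseteq\Zoc_1(B)$ is immediate, and in the inductive step your two conditions place $y\cdot b$ and $[g_y,g_b]_+$ in $I_{n-j+1}\subseteq\Zoc_j(B)$, so \eqref{eq:socle-skew} applied in the skew left brace $B/\Zoc_j(B)$ yields $\bar y\in\Soc\left(B/\Zoc_j(B)\right)=\Zoc_{j+1}(B)/\Zoc_j(B)$, which is legitimate because $j+1\geq 2$; hence $y\in\Zoc_{j+1}(B)$ and one concludes $B=\Zoc_{n+1}(B)$. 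This reformulation also lets you bypass \eqref{ZocSoc2} and the identification $G\cap\Zoc_{j-1}(B)=\Soc_{j-1}(B)$ entirely, which is safer since both rest on the equality $\Zoc_k(B)=\Soc_k(B)+E$.
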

\medskip

\begin{prop}
	Let $B$ be a left semi-brace with $E$ ideal of $B$ and such that $B/\Zoc\left(B\right)$ is right nilpotent. Then, $B$ is right nilpotent.
	\begin{proof}
		Observe that $\left(B/\Zoc\left(B\right)\right)^{\left(n\right)} = E\left(B/\Zoc\left(B\right)\right)$ if and only if 
		$B^{\left(n\right)}\subseteq \Zoc\left(B\right)$. Hence, applying \cref{le:E-id-I-id} to the ideal $\Soc\left(B\right)$, we get
		\begin{align*}
			B^{\left(n+1\right)}
			=B^{\left(n\right)}\cdot B + E
			\subseteq  \Zoc\left(B\right)\cdot B + E
			\subseteq \Soc\left(B\right)\cdot B + E
			= 0+E
			= E.
		\end{align*}
		Therefore, the result follows.
	\end{proof}
\end{prop}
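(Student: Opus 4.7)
The plan is to translate the hypothesis on the quotient into a containment inside $B$, and then exploit the special structure of $\Zoc(B) = \Soc(B) + E$ together with the basic identity $a \cdot b = 0$ for $a \in \Soc(B)$.

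First, I would verify the following compatibility between the right series and the quotient by an ideal $I$ of $B$: if $\pi : B \to B/I$ is the canonical homomorphism of left semi-braces, then $(B/I)^{(n)} = \pi\bigl(B^{(n)}\bigr)$ for every positive integer $n$. Indeed, $(B/I)^{(1)} = \pi(B)$ trivially, and inductively $(B/I)^{(n+1)} = (B/I)^{(n)} \cdot (B/I) + E(B/I) = \pi(B^{(n)} \cdot B) + \pi(E) = \pi(B^{(n+1)})$, since the set of additive idempotents of $B/I$ is $\pi(E)$. Applying this with $I = \Zoc(B)$, the assumption that $B/\Zoc(B)$ is right nilpotent means $\pi(B^{(n)}) = \pi(E)$ for some $n$, which is equivalent to $B^{(n)} \subseteq E + \Zoc(B) = \Zoc(B)$.

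Next, I would compute one more step of the right series. By the containment just obtained,
\begin{align*}
B^{(n+1)} = B^{(n)} \cdot B + E \subseteq \Zoc(B) \cdot B + E = (\Soc(B) + E) \cdot B + E.
\end{align*}
Since $E$ is an ideal of $B$, Lemma~\ref{le:E-id-I-id} gives $(\Soc(B) + E) \cdot B \subseteq \Soc(B) \cdot B$. Finally, the computation performed earlier for the socle shows that $a \cdot b = 0$ whenever $a \in \Soc(B)$ and $b \in B$; therefore $\Soc(B) \cdot B = 0$. Combining these inclusions, $B^{(n+1)} \subseteq 0 + E = E$, and since $E \subseteq B^{(n+1)}$ by construction, we get $B^{(n+1)} = E$, proving that $B$ is right nilpotent.

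The only slightly delicate step is the compatibility $(B/I)^{(n)} = \pi(B^{(n)})$ in the first paragraph, since one must be careful that idempotents of the quotient are exactly $\pi(E)$; everything else reduces to the two structural facts already at hand, namely Lemma~\ref{le:E-id-I-id} and the vanishing of $a \cdot b$ for $a \in \Soc(B)$.
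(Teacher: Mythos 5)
Your proposal is correct and follows essentially the same route as the paper: both reduce the hypothesis to the containment $B^{(n)}\subseteq \Zoc(B)$, then perform one more step of the right series, using \cref{le:E-id-I-id} to pass from $\Zoc(B)\cdot B$ to $\Soc(B)\cdot B$ and the identity $a\cdot b=0$ for $a\in\Soc(B)$ to conclude $B^{(n+1)}=E$. The only difference is that you spell out the compatibility $(B/I)^{(n)}=\pi(B^{(n)})$ and the fact that the idempotents of the quotient are $\pi(E)$, which the paper leaves implicit.
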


\bigskip

Let $B$ be a left semi-brace and $b\in B$, then we set $b^{(1)}:=b$ and, for every $n > 1$, $b^{(n+1)}:= b^{(n)}\cdot b$.
\begin{defin}
	A left semi-brace $B$ is said to be \emph{right nil} if  for every $b\in B$ there exists a positive integer $m$ such that $b^{(m)} = 0$.
\end{defin}
\noindent Note that by  \cref{prop:E-id}, if a left semi-brace $B$ is right nilpotent, then $B$ is right nil.
In analogy to \cite[Question 2.34]{CeSmVe19} we pose the following question.
\begin{que}
    Let $B$ be a finite right nil left semi-brace. Is $B$ right nilpotent?
\end{que}

\smallskip

\section{Left series of a left semi-brace}

In this section, we deal with left nilpotent left semi-braces introducing the notion of left series in analogy to the right series in the previous section. Moreover, under some assumptions, we show that left nilpotency is equivalent to have that the multiplicative group is nilpotent.
\medskip

\begin{defin}
	Let $B$ be a left semi-brace. 
	We define $B^{1}:= B$ and, for every positive integer $n$,
	\begin{align*}
	B^{n + 1} 
	= B\cdot B^{n} + E .
	\end{align*}
	We call the series
	$B^{1} \supseteq B^{2} \supseteq \cdots \supseteq B^{n}\cdots $\, the \emph{left series} of $B$.
\end{defin}

Observe that if $B$ is a skew left brace then the previous definition coincides with that provided in \cite[p. 1372]{CeSmVe19}. 
Moreover, similarly to \cite[Proposition 2.2]{CeSmVe19}, the sets $B^{n}$ are left ideals of a left semi-brace $B$, as shown in the following proposition.
\begin{prop}\label{prop:leftseries}
    Let $B$ be a left semi-brace. Then $B^{n}$ is a left ideal of $B$, for every positive integer $n$.
\end{prop}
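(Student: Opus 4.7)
My plan is to proceed by induction on $n$, closely mirroring the structure used in \cref{prop:B(n)-ideal}, but checking the four conditions of \cref{def:left-ideal} instead of those of \cref{th:ideal-semi-b}. The base case $n=1$ is immediate since $B^{1}=B$ is a left ideal of itself. So assume $B^{n}$ is a left ideal; we verify each condition for $B^{n+1}=B\cdot B^{n}+E$.

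For condition 1 ($B^{n+1}+0\subseteq B^{n+1}$): by \cref{prop:be0} we have $B\cdot B^{n}\subseteq G$, so every $x\in B^{n+1}$ can be written $x=z+e$ with $z\in B\cdot B^{n}$ and $e\in E$; then $x+0=z\in B^{n+1}$. For condition 2, the intersection $B^{n+1}\cap G$ equals $B\cdot B^{n}$, which is by construction a subgroup of $(G,+)$. For condition 3, the set $E$ is $\lambda_g$-invariant by \cref{prop:Proposition7}, and on the generators of $B\cdot B^{n}$ we compute, using \cref{prop:be0}-2,
\begin{align*}
\lambda_g(b\cdot x)=\lambda_g(b\cdot g_x)=\lambda_g\lambda_b(g_x)-\lambda_g(g_x)=\lambda_{g\circ b\circ g^-}\lambda_g(g_x)-\lambda_g(g_x)=(g\circ b\circ g^-)\cdot \lambda_g(g_x).
\end{align*}
Here $g_x\in B^{n}$ by \cref{le:subse-subg-B} applied to the ideal $B^{n}$, and $\lambda_g(g_x)\in B^{n}$ by the induction hypothesis, so the right-hand side lies in $B\cdot B^{n}\subseteq B^{n+1}$.

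The only genuinely delicate part is condition 4, that $B^{n+1}$ is a subgroup of $(B,\circ)$. I expect to mimic the corresponding computation in the proof of \cref{prop:B(n)-ideal}, starting from $a\circ b^{-}=a+\lambda_a(b^-)$ and using the identity $c=c+\lambda_c(0)$ (which holds since $c\circ 0=c$ and $\lambda_c(0)\in E$) to introduce an idempotent tail. Then using the definition of $\cdot$ via \eqref{def:puntino-1} together with the relations $\lambda_{b^-}(b)+b^-=e_{b^-}\in E$ and $g_{b^-}=g_b^-$ (when applicable), one rewrites $a\circ b^-$ as a sum of terms, each of which is either in $B\cdot B^{n}$ (using the inductive hypothesis that $B^{n}$ is closed under $\lambda_{g}$ for $g\in G$, and the fact that elements of the form $c\cdot x$ with $x\in B^{n}$ are generators of $B\cdot B^{n}$) or in $E$. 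The main obstacle will be bookkeeping: making sure that every summand produced during the expansion either visibly lies in $E$ or is of the form (something)$\cdot$(element of $B^{n}$), and that additive combinations of such summands stay inside $B^{n+1}$ thanks to condition 2 already proved for $B^{n+1}$.

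Once closure under $\circ$ and inverses in $(B,\circ)$ is established, all four conditions of \cref{def:left-ideal} hold for $B^{n+1}$ and the induction closes. I do not expect any new semi-brace identity beyond what was already deployed in the right-series proof; the arguments are strictly easier here because we no longer need $B^{n+1}$ to be \emph{normal} in $(B,\circ)$, only to be a subgroup.
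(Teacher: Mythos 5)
Your verification of conditions 1--3 is sound, but note that your condition 3 already departs from the paper's route: you transplant the conjugation identity $\lambda_g(b\cdot x)=(g\circ b\circ g^-)\cdot\lambda_g(g_x)$ from the proof of \cref{prop:B(n)-ideal} and lean on the inductive hypothesis $\lambda_g(B^{n})\subseteq B^{n}$, whereas the paper argues directly from
$\lambda_b(x)+0=\lambda_b(x)+\lambda_x(x^-)+x+0=b\cdot x+x+0$,
which puts the $B^{n}$-element $x$ into the \emph{right-hand} slot of $\cdot$ and therefore yields the stronger statement $\lambda_b(B^{n})\subseteq B^{n}$ for \emph{all} $b\in B$, not just $b\in G$. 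That stronger statement is exactly what the paper then uses for condition 4: closure under $\circ$ is immediate from $x\circ y=x+\lambda_x(y)$, and inverses come from $-\left(g_x^-\right)=g_x^-\cdot g_x+g_x\in B^{n}$ followed by $x^-=\lambda_{g_x^-}(e_x)^-\circ g_x^-$. Your alternative plan for condition 4 (expanding $a\circ b^-$ as in the right-series proof) can be made to work, but your remark that the argument is ``strictly easier'' than in the right-series case glosses over the one genuinely delicate point, which is not bookkeeping: in $B^{(n)}\cdot B$ it is the \emph{left} factor that is constrained, so there $a\cdot b^-$ is a generator simply because $a\in B^{(n+1)}\subseteq B^{(n)}$; in $B\cdot B^{n}$ it is the \emph{right} factor, so the terms $a\cdot b^-$ and $\lambda_{g_b^-}(e_b)^-\cdot g_b^-$ arising in the expansion land in $B\cdot B^{n}$ only because $b^-$ and $g_b^-$ lie in $B^{n}$ --- and that is not free: you must invoke the inductive hypothesis that $B^{n}$ is a subgroup of $(B,\circ)$, applied to $b,\,g_b\in B^{n+1}\subseteq B^{n}$, to place those inverses in $B^{n}$. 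If you transcribe the right-series computation with its original justifications, the proof is incorrect as written even though the conclusion is reachable; with that substitution made explicit, your induction closes. The paper's route sidesteps the issue entirely because its $\lambda_b$-invariance identity already feeds the $B^{n}$-element into the correct slot of $\cdot$.
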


\begin{proof}
Firstly, let us observe that if $x\in B^n$ then it is clear that $x + 0\in B^{n}$.
Moreover, it holds that $B^{n+1}\cap G=B\cdot B^n$ which by definition is a subgroup of $(G,+)$.
Note that $B^{n+1}\subseteq B^n$, for all positive integer $n$, hence, if $b\in B$ and $x\in B^n$, we have
	$$
	\lambda_b(x)+0=\lambda_b(x)+\lambda_x(x^-)+x+0=b\cdot x+x+0\in B^n,
	$$ 
therefore $\lambda_b(x)= \lambda_b(x)+0+e_{\lambda_b(x)}\in B^n$.
Now, if $x,y\in B^n$, observe that
$x\circ y = x+\lambda_x\left(y\right)\in B^n$. In addition,
since $g_x=x+0\in B^n$ and $g_x^{-}\cdot g_x\in B\cdot B^n\subseteq B^n$, it follows that
	\begin{align*}
	    - \left(g_x^-\right) =
	    g_x^-\cdot g_x + g_x\in B^n.
	\end{align*}
Hence $x^{-}=\lambda_{g_x^-}(e_x)^-\circ g_x^-\in B^n$. Therefore 
$B^n$ is a subgroup of $\left(B,\circ \right)$.
Thus the result follows.
\end{proof}
\bigskip

Let us introduce the following definition.
\begin{defin}
	A left semi-brace $B$ is said to be \emph{left nilpotent} if $B^{n} = E$\, for some positive integer $n$.
\end{defin}

\noindent Clearly, if $B$ is a skew left brace, then the previous definition coincides with Definition~$2.22$ in \cite{CeSmVe19}.
\bigskip

Let $B$ be a left semi-brace and $b\in B$, then we set $b^1:=b$ and, for every positive integer $n$, $b^{n+1}:=b\cdot b^n$.
\begin{defin}
	A left semi-brace $B$ is said to be \emph{left nil} if  for all $b\in B$ there exists a positive integer $m$ such that $b^{m} = 0$.
\end{defin}
\noindent Note that by \cref{prop:be0}-$1.$, if a left brace $B$ is left nilpotent, then $B$ is left nil.
Observe that it is known that that every finite left nil left brace is left nilpotent, as shown by Smoktunowicz in \cite[Theorem 12]{Sm18-2} and \cite[Theorem 1.1]{Sm18}.
\begin{que}
    Let $B$ be a finite left nil left semi-brace. Is $B$ a left nilpotent left semi-brace?
\end{que}

Moreover, by \cite[Theorem 12]{Sm18-2}, it follows that any finite left brace $B$ is left nilpotent if and only if its multiplicative group $\left(B,\circ\right)$ is nilpotent. Recently, an analogue result has been obtained by Ced\'{o}, Smoktunowicz, and Vendramin in \cite{CeSmVe19} in the context of skew left braces, namely, any finite skew left brace $B$ with nilpotent additive group is
left nilpotent if and only if the multiplicative group of $B$ is nilpotent. Thus, we pose the following question.
\begin{que}\label{que:nilpotent}
    Let $B$ be a left nilpotent left semi-brace. Under which conditions is $(B,\circ )$ a nilpotent group?
\end{que}

\smallskip

	\begin{prop}\label{prop:leftnilp-nilp}
		Let $B$ be a left semi-brace such that $\left(G, +\right)$ is a finite nilpotent group and  $(E,\circ)$ is a nilpotent group contained in the centralizer of $G$ in $\left(B, \circ\right)$. Then, $B$ is left nilpotent if and only if the multiplicative group $\left(B, \circ\right)$ of $B$ is nilpotent.
		\begin{proof}
			Observe that if $B=E$, then the assertion is trivially satisfied.
			Thus, let us assume that $B\neq E$.
			Initially, suppose that $B$ is left nilpotent. Then, there exists an integer $n > 1$, such that $B^n=E$. It follows that $B\cdot B^{n-1} = 0$, hence $G\cdot G^{n-1} \subseteq B\cdot B^{n-1}  = 0$, i.e., the skew left brace $G$ is left nilpotent. By \cite[Theorem 4.8]{CeSmVe19}, we have that the group $\left(G, \circ\right)$ is nilpotent. Since $(E,\circ)$ is a nilpotent group contained in the centralizer of $G$ in $\left(B, \circ\right)$, we obtain that $\left(B,\circ\right)$ is the direct product of $(G,\circ)$ and $(E,\circ)$, two nilpotent subgroups. Therefore, the group $\left(B, \circ\right)$ is nilpotent.
			
			Conversely, let us suppose that $\left(B, \circ\right)$ is a nilpotent group. Hence, $\left(G, \circ\right)$ is nilpotent and by \cite[Theorem 4.8]{CeSmVe19} it follows that the skew left brace $G$ is left nilpotent, i.e., $G\cdot G^n  = 0$ for a certain positive integer $n$. 
			Moreover, since $E$ is a nilpotent group contained in the centralizer of $G$ in $\left(B, \circ\right)$, $\left(E, \circ\right)$ is a normal subgroup of $\left(B, \circ\right)$, so $E$ is an ideal of $B$. Thus, by \cref{cor:E-id-1}, it is easy to see by induction on $k>0$ that $B\cdot B^k=G\cdot G^k$. Hence, $B\cdot B^n= G\cdot G^n  = 0$, therefore $B^{n+1} = E$, i.e., $B$ is left nilpotent.
		\end{proof}
	\end{prop}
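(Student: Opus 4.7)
The plan is to reduce both directions of the equivalence to the corresponding result for skew left braces, namely \cite[Theorem 4.8]{CeSmVe19}: a skew left brace of nilpotent type is left nilpotent if and only if its multiplicative group is nilpotent. The centralizer hypothesis on $(E,\circ)$ will play two complementary roles in the transfer between $B$ and its sub-skew-brace $G$. On the one hand, combined with $G\cap E=\{0\}$ and the matched-product decomposition $B = G\circ E$, it presents $(B,\circ)$ as the internal direct product $(G,\circ)\times(E,\circ)$. On the other, since it makes $(E,\circ)$ normal in $(B,\circ)$, \cref{rem:E-ideal} gives that $E$ is an ideal of $B$, and hence \cref{cor:E-id-1} applies in the form $a\cdot b = g_a\cdot g_b$. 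The trivial case $B=E$ is immediate, so assume $B\neq E$.

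For the forward direction, starting from $B^n = E$, the plan is to observe that $B\cdot B^{n-1}$ lies by construction in the subgroup $(G,+)$ while $G\cap E = \{0\}$, so the equality $B^n = B\cdot B^{n-1} + E = E$ forces $B\cdot B^{n-1} = 0$. A short induction gives $G^k\subseteq B^k$ for every $k$, so $G^n = G\cdot G^{n-1}\subseteq B\cdot B^{n-1} = 0$, and the skew brace $G$ is left nilpotent. Then \cite[Theorem 4.8]{CeSmVe19} yields that $(G,\circ)$ is nilpotent, and the direct product decomposition of $(B,\circ)$ delivers nilpotency of $(B,\circ)$.

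For the backward direction, $(G,\circ)$ is nilpotent as a subgroup of the nilpotent group $(B,\circ)$, so \cite[Theorem 4.8]{CeSmVe19} again gives $G^{n+1} = 0$ for some $n$. The technical core is to prove by induction on $k$ that $B^k = G^k + E$ and, as a consequence, $B\cdot B^k = G\cdot G^k = G^{k+1}$. The base $k=1$ is just $B = G + E$; for the step one uses \cref{cor:E-id-1} to rewrite each generator $a\cdot b$ with $b\in B^k$ as $g_a\cdot g_b$, noting that $g_b = b+0\in G^k$ by the inductive hypothesis, and verifying the reverse inclusion by taking the special case $a\in G$ and $b\in G^k$. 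Taking $k=n$ then yields $B^{n+1} = B\cdot B^n + E = 0 + E = E$, proving left nilpotency. The main obstacle is precisely this reduction $B\cdot B^k = G\cdot G^k$: without the centralizer hypothesis $E$ need not be an ideal, \cref{cor:E-id-1} is unavailable, and there is no mechanism to collapse the operation $\cdot$ from $B$ down to $G$ where the CSV theorem can be applied.
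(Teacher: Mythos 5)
Your proposal is correct and follows essentially the same route as the paper's own proof: both directions reduce to \cite[Theorem 4.8]{CeSmVe19} via the internal direct product decomposition $(B,\circ)\cong(G,\circ)\times(E,\circ)$ coming from the centralizer hypothesis, and the backward direction uses normality of $(E,\circ)$ to make $E$ an ideal so that \cref{cor:E-id-1} collapses $B\cdot B^k$ to $G\cdot G^k$. Your additional details (the $G\cap E=\{0\}$ argument forcing $B\cdot B^{n-1}=0$, and the explicit induction $G^k\subseteq B^k$) merely spell out steps the paper leaves implicit.
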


\medskip

    Note that if  $B$ is a left nilpotent left semi-brace and the group $\left(G, +\right)$ is nilpotent, then in general $\left(B, \circ\right)$ is not a nilpotent group, not even if $\left(E, \circ\right)$ is a nilpotent group.  
\begin{ex}\label{ex:semi-braceS3-2}
	Let $B$ be the left semi-brace recalled in \cref{ex:semi-braceS3}, where $\left(B,\circ\right)$ is the symmetric group $\Sym_3$ and 
	$a+b:= b\circ\varphi\left(a\right)$ with $\varphi$ the idempotent endomorphism of $\Sym_3$ defined by setting $\varphi\left(12\right) = \left(12\right)$ and $\varphi\left(123\right) = \id$. Since $G = \Soc\left(G\right)$, by \cref{cor:E-id}, we have that
	$a\cdot b = g_a\cdot g_b = 0$.
	Thus, $B^{\left(2\right)}= B^2 = E$, the groups $\left(G, +\right)$ and $\left(E, \circ\right)$ are nilpotent, but $\left(B,\circ\right)$ is not.\\
	%$G = <\left(12\right)>$
	Moreover, let us observe that clearly $E$ is not contained in the centralizer of $G$ in $\left(B, \circ\right)$.
\end{ex}

\begin{rem}\label{rem:prop-leftnilp-nilp}
		Let us observe that it is easy to check that the converse part of \cref{prop:leftnilp-nilp} is true also by assuming that $E$ is an ideal of $B$. Namely, it holds that if $B$ is a left semi-brace such that $\left(G, +\right)$ is a finite nilpotent group and $E$ is an ideal of $B$, if $\left(B, \circ\right)$ is a nilpotent group then the left semi-brace $B$ is left nilpotent.
\end{rem}

\smallskip

\section{Strongly nilpotent left semi-braces}

In this section, we define another series of left ideals for left semi-braces and investigate the relation of such a series with the left and the right series.

\medskip

Let $B$ be a left semi-brace. Let $B^{[1]}:= B$ and, for every positive integer $n$,
$$
B^{[n+1]}
= \gr\left\langle \bigcup_{i=1}^{n}B^{[i]}\cdot B^{[n+1-i]}\right\rangle_+ +E.
$$

Note that such a series is already known in the context of skew left braces \cite[p. 1376]{CeSmVe19} and 
 of left braces \cite[p. 6540]{Sm18}. In the last case, it is in particular a series of ideals of a left brace $B$.
\medskip

With similar computation in the proof of \cref{prop:leftseries}, one can prove the following result.
    \begin{prop}
        Let $B$ be a left semi-brace. Then $B^{[n]}$ is a left ideal of $B$, for every positive integer $n$.
    \end{prop}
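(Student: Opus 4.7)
The plan is to mirror the inductive argument of \cref{prop:leftseries}, proceeding by induction on $n$ and verifying in turn the four conditions of \cref{def:left-ideal} for $B^{[n+1]}$ at each step; the base case $B^{[1]}=B$ is trivial.

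An auxiliary observation I would record first is the decreasing property $B^{[n+1]}\subseteq B^{[n]}$, which plays the role here that $B^{n+1}\subseteq B^n$ plays in \cref{prop:leftseries}. Unlike the earlier case where the containment is essentially immediate from the recursive definition, here I plan to establish it by its own induction: supposing $B^{[k+1]}\subseteq B^{[k]}$ for all $k<n$, every generator $x\cdot z$ of $B^{[n+1]}\cap G$ with $x\in B^{[i]}$, $z\in B^{[n+1-i]}$, $1\leq i\leq n$, can be pushed into a generator of $B^{[n]}\cap G$ by applying the inductive hypothesis to shrink either the right factor (when $i<n$, via $B^{[n+1-i]}\subseteq B^{[n-i]}$) or the left factor (when $i=n$, via $B^{[n]}\subseteq B^{[n-1]}$). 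Since $E\subseteq B^{[n]}$, the containment $B^{[n+1]}\subseteq B^{[n]}$ follows.

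With the decreasing property in hand, write $B^{[n+1]}=H+E$, where $H$ is the subgroup of $(G,+)$ generated by $\bigcup_{i=1}^{n}B^{[i]}\cdot B^{[n+1-i]}$. Conditions 1 and 2 of \cref{def:left-ideal} are then immediate: $(h+e)+0=h\in B^{[n+1]}$ for $h\in H$, $e\in E$, and the uniqueness of the $G+E$ decomposition forces $B^{[n+1]}\cap G=H$, which is a subgroup by construction. For condition 3, I apply the same identity $\lambda_b(y)+0=b\cdot g_y+g_y$ used in \cref{prop:leftseries}: the decreasing property puts $b\cdot g_y\in B\cdot B^{[n+1]}\subseteq B^{[n+2]}\cap G\subseteq H$, hence $b\cdot g_y+g_y\in H$, and therefore $\lambda_b(y)=(\lambda_b(y)+0)+e_{\lambda_b(y)}\in H+E=B^{[n+1]}$.

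Condition 4 splits into $\circ$-closure and inverses. Additive closure of $B^{[n+1]}$ is direct from conditions 1 and 2 via $x+y=g_x+g_y+e_y$, and $\circ$-closure then follows from $y_1\circ y_2=y_1+\lambda_{y_1}(y_2)$ together with condition 3. Inverses are handled as in \cref{prop:leftseries}: the identity $-g_y^-=g_y^-\cdot g_y+g_y$, combined with $g_y^-\cdot g_y\in B\cdot B^{[n+1]}\subseteq H$ and the factorization $y^-=\lambda_{g_y^-}(e_y)^-\circ g_y^-$, yields $y^-\in B^{[n+1]}$. The main obstacle is the decreasing step; once that is in place, everything else transcribes from the proof of \cref{prop:leftseries} with $B^{[n]}$ in place of $B^n$.
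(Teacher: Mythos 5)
Your proposal is correct and follows exactly the route the paper intends: the paper gives no details here beyond ``with similar computation in the proof of \cref{prop:leftseries}'', and your argument is a faithful transcription of that proof with $B^{[n]}$ in place of $B^{n}$. The one point genuinely requiring extra care --- the decreasing property $B^{[n+1]}\subseteq B^{[n]}$, which is no longer immediate from the recursion --- you identify and establish correctly by a separate induction, and the identities you invoke ($\lambda_b(y)+0=b\cdot g_y+g_y$, $-g_y^-=g_y^-\cdot g_y+g_y$, $y^-=\lambda_{g_y^-}(e_y)^-\circ g_y^-$) all transfer as claimed.
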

\medskip

The following theorem illustrates the relation between the sequence of the left ideals $B^{[n]}$ and the left and right series of a left semi-brace $B$. In particular, this result is the analogue of Theorem 2.30 in \cite{CeSmVe19}. 
\begin{theor}\label{theor:genseries}
	Let $B$ be a left semi-brace. The following statements are equivalent.
	\begin{itemize}
		\item[(i)] $B^{[\alpha]}=E$ for some positive integer $\alpha$.
		\item[(ii)] $B^{(\beta)}=E$ and $B^{\gamma}=E$ for some positive integers $\beta,\gamma$.
	\end{itemize}
\begin{proof}
	Similar to the proof of \cite[Theorem 2.30]{CeSmVe19}. 
\end{proof} 	
\end{theor}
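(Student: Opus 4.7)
The plan is to follow the same strategy as in \cite[Theorem 2.30]{CeSmVe19}, adapted to the semi-brace setting, which is exactly what the authors themselves invoke.

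For the easy implication (i)$\Rightarrow$(ii), I would prove by induction on $n$ that both $B^{(n)}\subseteq B^{[n]}$ and $B^{n}\subseteq B^{[n]}$. The base case $n=1$ is immediate. For the inductive step, assuming $B^{(n)}\subseteq B^{[n]}$, one obtains
\[
B^{(n+1)}=B^{(n)}\cdot B+E\subseteq B^{[n]}\cdot B^{[1]}+E\subseteq B^{[n+1]},
\]
since $B^{[n]}\cdot B^{[1]}$ is one of the generating components of $B^{[n+1]}$, and the argument for the left series is completely symmetric. Consequently, $B^{[\alpha]}=E$ forces both $B^{(\alpha)}=E$ and $B^{\alpha}=E$, yielding (ii) with $\beta=\gamma=\alpha$.

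For the harder direction (ii)$\Rightarrow$(i), the strategy is a structural induction on the binary trees indexing the generators of $B^{[n]}$. The essential algebraic tool is \cref{prop:cdot-circ}-$2.$, which provides
\[
a\cdot(b\cdot c)=(a\circ b)\cdot c-b\cdot c-a\cdot c,
\]
valid in $(G,+)$ since every product $x\cdot y$ lies in $G$ by \cref{prop:be0}. This identity (together with its analogue for the sum supplied by \cref{prop:cdot-circ}-$1.$) is what allows us to rewrite arbitrarily nested brackets as combinations of right-leaning or left-leaning products, modulo lower-complexity corrections. A careful induction, mirroring that in \cite[Theorem 2.30]{CeSmVe19}, then shows that up to $E$ every element of $B^{[n]}$ is expressible in terms of $B^{(\beta)}$ and $B^{\gamma}$, forcing $B^{[n]}\subseteq E$ once $n$ is chosen sufficiently large in terms of $\beta$ and $\gamma$.

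The main obstacle is the combinatorial bookkeeping required to track how the leaf count and bracket structure of the trees evolve under repeated application of the rewriting identity, and to verify that this process terminates after boundedly many steps in terms of $\beta$ and $\gamma$. In the semi-brace setting, the presence of the $+E$ summand in all three series is an asset rather than a hindrance: any idempotent error produced along the way may be absorbed silently into the $+E$ tail, after which the combinatorics reduces to precisely that of the skew-brace case treated in \cite[Theorem 2.30]{CeSmVe19}.
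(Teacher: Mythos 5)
Your proposal takes essentially the same route as the paper, whose entire proof of this theorem is the single line deferring to \cite[Theorem 2.30]{CeSmVe19}: the easy implication via the inductive containments $B^{(n)}\subseteq B^{[n]}$ and $B^{n}\subseteq B^{[n]}$, and the hard one by transporting the combinatorial induction of the skew-brace case using \cref{prop:cdot-circ} and absorbing idempotents into the $+E$ tail. The only slip is that solving \cref{prop:cdot-circ}-$2.$ for $a\cdot(b\cdot c)$ in the possibly non-abelian group $(G,+)$ yields $(a\circ b)\cdot c - a\cdot c - b\cdot c$ rather than $(a\circ b)\cdot c - b\cdot c - a\cdot c$, which is immaterial for the subgroup-membership arguments you actually need.
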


\begin{defin}
    A left semi-brace is said to be  \emph{strongly nilpotent} if $B^{[n]} = E$ for some positive integer $n$.
\end{defin}

Since any strongly nilpotent left semi-brace $B$ is right nilpotent, we have that $E$ is an ideal of $B$. Thus, it is easy to see, by induction and using \cref{cor:E-id-1}, that
\begin{align*}
    B^{[k]}=G^{[k]}+E\qquad\quad
    B^{(k)}=G^{(k)}+E\qquad\quad
    B^{k}=G^{k}+E,
\end{align*}
for every positive integer $k$.   

\smallskip

\section{Nilpotent left semi-braces}

In this section,  we introduce the notion of nilpotent left semi-brace. In particular, we show that nilpotenty implies that the multiplicative group $\left(B,\circ\right)$ is nilpotent. In addition, considering left semi-braces having the set of idempotents as an ideal, we show how nilpotency implies right nilpotency and, in some cases, the left nilpotency.

\medskip

Initially, let us give a suitable notion of annihilator of a left semi-brace that is a generalization of that introduced in the context of the skew left braces in \cite[Definition 7]{CCoSt19}.
\begin{defin}
	Let $B$ be a left semi-brace. Then, the set
	\begin{align*}
	\Ann\left(B\right)
	:= \Soc\left(B\right)\cap \Z\left(B\right) + E\cap \Z\left(B\right)
	\end{align*}
	is said to be the \emph{annihilator} of $B$, where $\Z\left(B\right)$ denotes the centre of the group $\left(B,\circ\right)$.
\end{defin}
\noindent Let us observe that 
$\Ann\left(B\right)\subseteq \left(\Soc\left(B\right) + E\right)\cap \Z\left(B\right)
= \Zoc\left(B\right)\cap \Z\left(B\right)$.
Moreover, under the assumption of $E$ ideal of $B$, it holds that
\begin{align}\label{eq:ann-puntino}
b\cdot a = a\cdot b = 0,
\end{align}
for all $a\in\Ann\left(B\right)$ and $b\in B$. Indeed, since $g_a\in \Soc\left(B\right)$, as seen in Section 2 and by \cref{cor:E-id-1}, we obtain that $a\cdot b = g_a\cdot g_b = 0$. Furthermore, by \cref{cor:E-id-1} and since $g_a\in \Soc\left(B\right)\cap\Z\left(B\right)$,  it follows
\begin{align*}
	b\cdot a
	=  g_b\cdot g_a
	= -g_b + g_b\circ g_a - g_a 
	= -g_b  -g_a + g_a\circ g_b 
	= -g_b  -g_a+ g_a + g_b 
	= 0.
\end{align*}

\begin{prop}
	Let $B$ be a left semi-brace such that $E$ is an ideal of $B$. Then, $\Ann\left(B\right)$ is an ideal of $B$.
	\begin{proof}
		We prove our assertion by using \cref{th:ideal-semi-b}.
		Thus, if $a\in \Ann\left(B\right)$, since $g_a\in\Soc\left(B\right)\cap \Z\left(B\right)$ and $e_a\in E\cap \Z\left(B\right)$, we get $a + 0 = g_a + 0\in \Ann\left(B\right)$, i.e., $\Ann\left(B\right) + 0\subseteq \Ann\left(B\right)$.
		Moreover, if $a,b\in\Ann\left(B\right)\cap G$, i.e., $a, b\in \Soc\left(B\right)\cap \Z\left(B\right)$, we obtain 
		\begin{align*}
		a  + b  
		&= a\circ\lambda_{a^-}\left(b\right)
		= a \circ b\in \Soc\left(B\right)\cap\Z\left(B\right)\\
		-a &= \lambda_{a}\left(a^-\right) = a^-\in \Soc\left(B\right)\cap\Z\left(B\right),
		\end{align*}
		i.e., $\Ann\left(B\right)\cap G$ is a subgroup of $\left(G, +\right)$.
		Furthermore, if $g\in G$ and $a\in \Soc\left(B\right)\cap\Z\left(B\right)$, 
		\begin{align*}
		g + a - g = a + g - g = a + 0 = a\in Soc\left(B\right)\cap\Z\left(B\right),
		\end{align*}
		hence $\Ann\left(B\right)\cap G$ is a normal subgroup of $\left(G, +\right)$.
		Now, if  $a\in\Ann\left(B\right)$, 
		%$a\in\Soc\left(B\right)\cap \Z\left(B\right)$ and $e\in E\cap \Z\left(B\right)$,
		\begin{align*}
		\lambda_g\left(a\right)
		&= \lambda_g\left(g_a\right) + \lambda_g\left(e_a\right)
		= -g + g\circ g_a -g + g\circ e_a\\
		& = -g + g_a\circ g - g + e_a\circ g&\mbox{$g_a\in \Z\left(B\right)$ and $e_a\in \Z\left(B\right)$}\\
		&= -g + g_a + g - g + g + e_a &\mbox{by \cref{cor:E-id}}\\
		&= g_a + e_a = a\in \Ann\left(B\right).
		\end{align*}
		Finally, note that $\Ann\left(B\right)$ is trivially a normal subgroup of $\left(B, \circ\right)$.
		Therefore, by \cref{th:ideal-semi-b}, the claim follows.
	\end{proof}
\end{prop}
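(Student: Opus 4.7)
The plan is to verify the four conditions of \cref{th:ideal-semi-b} for $I := \Ann(B)$. The key structural observation I will lean on is that every $a \in \Ann(B)$ admits a unique decomposition $a = s + e = e\circ s$ with $s := g_a \in \Soc(B)\cap\Z(B)$ and $e := e_a \in E\cap\Z(B)$, where the second equality comes from \cref{cor:E-id}. Since $\Soc(B)\subseteq G$ and $G\cap E=\{0\}$, this also gives $\Ann(B)\cap G = \Soc(B)\cap\Z(B)$.

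I would dispatch conditions (1) and (2) of \cref{th:ideal-semi-b} first. For (1), $a + 0 = s + 0 = s \in \Ann(B)$. For (2), using that each $s \in \Soc(B)$ satisfies $s + b = s\circ b$ for every $b \in B$, the set $\Soc(B)\cap \Z(B)$ is closed under $+$ and additive inverses (since $-s = s^-$). Normality in $(G,+)$ is even stronger: the defining identity of the socle gives $-g + s + g = s + 0 = s$ for every $g \in G$ and $s \in \Soc(B)$, so $\Soc(B)\cap\Z(B)$ is in fact central in $(G,+)$.

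The main computation is condition (3), namely $\lambda_g(\Ann(B))\subseteq \Ann(B)$. Writing $\lambda_g(a) = \lambda_g(s) + \lambda_g(e)$, I would handle the two summands separately. For the group part, $\lambda_g(s) = -g + g\circ s = -g + s\circ g = -g + s + g = s$, where I used in order the centrality of $s$ in $(B,\circ)$, the identity $s\circ g = s + g$ (valid since $s \in \Soc(B)$), and the additive centrality of $s$ established in the previous step. For the idempotent part, \cref{cor:E-id} rewrites $\lambda_g(e)$ as $g\circ e\circ g^-$, and this equals $e$ because $e \in \Z(B)$. Hence $\lambda_g(a) = s + e = a \in \Ann(B)$.

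Condition (4) is essentially automatic once the decomposition $a = e\circ s$ is in hand: closure of $\Ann(B)$ under $\circ$ and $\circ$-inverses follows by reordering the central factors, for instance $(e_1\circ s_1)\circ(e_2\circ s_2)^- = (e_1\circ e_2^{-})\circ(s_1\circ s_2^{-})$, which lies in $\Ann(B)$ since $E\cap\Z(B)$ and $\Soc(B)\cap\Z(B)$ are each groups under $\circ$; and normality in $(B,\circ)$ is trivial because $\Ann(B)\subseteq \Z(B)$. The only genuinely non-trivial step is (3), whose force really comes from combining \cref{cor:E-id} (which gives $E$ as a $\lambda$-invariant central subgroup once $E$ is an ideal and its elements are central) with the fact that socle elements are additively central in $G$.
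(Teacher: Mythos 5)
Your proposal is correct and follows essentially the same route as the paper: both verify the four conditions of \cref{th:ideal-semi-b} using the decomposition $a=g_a+e_a$ with $g_a\in\Soc(B)\cap\Z(B)$ and $e_a\in E\cap\Z(B)$, the socle identities, \cref{cor:E-id}, and centrality in $(B,\circ)$. The only cosmetic differences are that you compute $\lambda_g(e_a)$ via the conjugation formula $\lambda_g(e)=g\circ e\circ g^-$ instead of the paper's direct additive expansion, and you spell out the subgroup closure in condition (4), which the paper dismisses as trivial.
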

\medskip

\begin{defin}
	Let $B$ be a left semi-brace.
	Then, for every positive integer $k$, we define
	\begin{align*}
	\Ann_k\left(B\right)
	= \Soc_{k}\left(B\right)\cap \zeta_{k}\left(B\right)+ 
	E\cap \zeta_{k}\left(B\right),
	\end{align*}
	where $\zeta_{k}\left(B\right)$ is the $k$-th center of $\left(B,\circ\right)$. 
	We call this series the \emph{annihilator series} of $B$.
\end{defin}

\begin{defin}
	Let $B$ be a left semi-brace. We say that $B$ is \emph{nilpotent} if there exists a positive integer $n$ such that $\Ann_n\left(B\right) = B$.
\end{defin}
Let us observe that, if the structure $\left(B,+,\cdot\right)$ is, in particular, a radical ring, then the previous definition coincides with the classical concept of nilpotency known for rings themselves.
\medskip

\begin{theor}\label{th:nilpotent-nilpotent}
	Let $B$ be a nilpotent left semi-brace. Then, the multiplicative group $\left(B, \circ\right)$ is a nilpotent group.
	\begin{proof}
		Since $\Ann_n\left(B\right) = B$, for a certain positive integer $n$, it follows that $G\subseteq \zeta_{n}\left(B\right)$ and $E\subseteq \zeta_{n}\left(B\right)$. 
		Now, if $b\in B$, then $b$ can be written as $b = g_b\circ\lambda_{g_b^-}\left(e_b\right)$ with $g_b\in G$ and $\lambda_{g_b^-}\left(e_b\right)\in E$, hence 
		$B = \zeta_n\left(B\right)$.
		Therefore, the group $\left(B,\circ\right)$ is nilpotent.
	\end{proof}
\end{theor}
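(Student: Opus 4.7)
The plan is to extract from the hypothesis $\Ann_n(B)=B$ that both $G$ and $E$ are contained in the $n$-th center $\zeta_n(B)$ of the group $(B,\circ)$, and then to use the matched product structure recorded in the preliminaries to conclude that every element of $B$ itself lies in $\zeta_n(B)$.

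First I would unpack the definition. By hypothesis, for every $b\in B$ we have
\[
b \in \Soc_n(B)\cap \zeta_n(B) + E\cap \zeta_n(B),
\]
so there exist $x\in \Soc_n(B)\cap \zeta_n(B)\subseteq G$ and $e\in E\cap \zeta_n(B)$ with $b=x+e$. Since every element of $B$ has a \emph{unique} additive decomposition $b=g_b+e_b$ with $g_b\in G$ and $e_b\in E$ (recorded in Section~1), this forces $g_b=x\in \zeta_n(B)$ and $e_b=e\in \zeta_n(B)$. In particular $G\subseteq \zeta_n(B)$ and $E\subseteq \zeta_n(B)$.

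Next, I would invoke the matched product identity $b=g_b\circ \lambda_{g_b^-}(e_b)$ from the preliminaries, with $g_b\in G$ and $\lambda_{g_b^-}(e_b)\in E$ (the latter by \cref{prop:Proposition7}-$1.$). Both factors lie in $\zeta_n(B)$ by the previous paragraph, and $\zeta_n(B)$ is a subgroup of $(B,\circ)$, so $b\in \zeta_n(B)$. Since $b$ was arbitrary, $B=\zeta_n(B)$, i.e.\ $(B,\circ)$ is nilpotent of class at most $n$.

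No real obstacle arises. The only point worth stressing is the appeal to uniqueness of the additive decomposition $B=G+E$, which is what allows us to pass from $b\in \Ann_n(B)$ to the individual statements $g_b\in \zeta_n(B)$ and $e_b\in \zeta_n(B)$; everything else is a direct consequence of the matched product description of $(B,\circ)$.
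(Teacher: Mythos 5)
Your proof is correct and follows essentially the same route as the paper's: deduce $G\subseteq\zeta_n(B)$ and $E\subseteq\zeta_n(B)$ from $\Ann_n(B)=B$, then use the factorization $b=g_b\circ\lambda_{g_b^-}(e_b)$ and the fact that $\zeta_n(B)$ is a subgroup of $(B,\circ)$ to get $B=\zeta_n(B)$. The only difference is that you make explicit, via uniqueness of the additive decomposition $b=g_b+e_b$, the step the paper leaves implicit ("it follows that $G\subseteq\zeta_n(B)$ and $E\subseteq\zeta_n(B)$"), which is a welcome clarification rather than a new argument.
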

\medskip

In particular, let us note that if $\Ann\left(B\right) = B$, then $\left(B, \circ\right)$ is an abelian group and in this case $E$ is clearly an ideal of $B$. Thus, by \eqref{eq:ann-puntino}, we obtain that $a\cdot b = 0$,  for all $a,b\in B$, hence $B^{\left(2\right)} = B^{2} = E$.

\bigskip

More in general, we can relate nilpotency with right nilpotency and, under some assumptions, with left nilpotency, as shown in the following theorem.
\begin{theor}\label{th:nilp-leftrightnilp}
	Let $B$ be a nilpotent left semi-brace such that $E$ is an ideal of $B$. Then, $B$ is right nilpotent. 
	Moreover, if $\left(G,+\right)$ is a finite nilpotent group, then $B$ is left nilpotent.
	\begin{proof}
		By the assumption, we have that $G\subseteq\Soc_{n}\left(B\right)$, for a certain positive integer $n$.  
		Let $b_1,\ldots,b_{n}\in B$. Observe that, by  \cref{cor:E-id-1}, 
		$b_1\cdot b_2 = g_{b_1}\cdot g_{b_2}$ and since $g_{b_1}\in\Soc_{n}\left(B\right)\subseteq\Soc_{n}\left(G\right)$, by equality \eqref{eq:socle-skew-n} it follows that $b_1\cdot b_2\in \Soc_{n-1}\left(G\right)$.  On applying this argument $n$ more times, we obtain that the element $\left(\left(b_1\cdot b_2\right)\cdot \ \cdots \ \cdot b_{n}\right)\in\Soc\left(G\right)$.
		Thus, if $b\in B$ and  $e\in E$, by  \cref{cor:E-id-1} it follows that
		\begin{align*}
			\left(\left(\left(b_1\cdot b_2\right)\cdot \ \cdots \ \cdot b_n\right) + e\right)\cdot b
			= \left(\left(\left(g_{b_1}\cdot g_{b_2}\right)\cdot \ \cdots \ \cdot g_{b_n}\right)\right)\cdot g_{b}
			=0. 
		\end{align*}
		Hence, $B^{\left(n+1\right)} = E$, i.e., the left semi-brace $B$ is right nilpotent.\\
		Now, suppose that $\left(G,+\right)$ is a finite nilpotent group. Thus, by \cref{th:nilpotent-nilpotent} we have that the multiplicative group $\left(B,\circ\right)$ of $B$ is a nilpotent group. Therefore, by %\cref{prop:leftnilp-nilp} and 
		\cref{rem:prop-leftnilp-nilp}, we obtain that the left semi-brace $B$ is left nilpotent.
	\end{proof} 
\end{theor}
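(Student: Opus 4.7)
The plan is to extract from the nilpotency hypothesis a strong control on the skew left brace $G$, and then transfer that control back to $B$ via \cref{cor:right-series}. First I would unpack $\Ann_n(B)=B$ into the inclusions $G\subseteq \Soc_n(B)\cap\zeta_n(B)$ and $E\subseteq\zeta_n(B)$, using the uniqueness of the decomposition $b=g_b+e_b$ with $g_b\in G$ and $e_b\in E$. Since a short induction shows $\Soc_n(B)\subseteq G$ (the socle of any left semi-brace lies in its $G$-part, and the $G$-part of $B/\Soc_{k-1}(B)$ is $G/\Soc_{k-1}(B)$ because $\Soc_{k-1}(B)\subseteq G$), we obtain $\Soc_n(B)=G$. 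A parallel induction, using that the defining conditions for $\Soc(B)$ are stronger than those for $\Soc(G)$, yields $\Soc_n(B)\subseteq \Soc_n(G)$ in the skew left brace $G$, whence $\Soc_n(G)=G$.

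Having achieved $G=\Soc_n(G)$, I would reduce right nilpotency to a computation inside $G$. By \cref{cor:right-series}, $B^{(k)}=G^{(k)}+E$ for every positive integer $k$, so it suffices to prove $G^{(n+1)}=0$. Apply \eqref{eq:socle-skew-n} iteratively: $G^{(2)}=G\cdot G\subseteq \Soc_{n-1}(G)$, because any $a\in G=\Soc_n(G)$ satisfies $a\cdot b\in\Soc_{n-1}(G)$ for all $b\in G$; inductively $G^{(k+1)}\subseteq \Soc_{n-k}(G)$, and at $k=n$ this gives $G^{(n+1)}\subseteq \Soc_0(G)=0$. The lift from $G$ to $B$ is painless thanks to \cref{cor:E-id-1}, which shows that any iterated $\cdot$-product of elements of $B$ coincides with the corresponding iterated $\cdot$-product of their $G$-parts. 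Therefore $B^{(n+1)}=E$, i.e.\ $B$ is right nilpotent.

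For the moreover-part no further work is required: \cref{th:nilpotent-nilpotent} already asserts that the multiplicative group $(B,\circ)$ of a nilpotent left semi-brace is nilpotent, and \cref{rem:prop-leftnilp-nilp} then converts the nilpotency of $(B,\circ)$ into the left nilpotency of $B$ under the standing hypotheses that $(G,+)$ is a finite nilpotent group and $E$ is an ideal of $B$. I expect the only real bookkeeping difficulty to lie in the first paragraph, namely in setting up the two nested inductions $\Soc_n(B)\subseteq G$ and $\Soc_n(B)\subseteq\Soc_n(G)$ and in tracking how the socle series of $B$ interacts with the successive quotients $B/\Soc_{k-1}(B)$; everything downstream is then a direct unwinding of the definitions combined with results already available in the paper.
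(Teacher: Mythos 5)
Your proof is correct and follows essentially the same route as the paper's: both unpack $\Ann_n(B)=B$ into $G\subseteq\Soc_n(B)\subseteq\Soc_n(G)$, iterate \eqref{eq:socle-skew-n} to descend the socle series of $G$ one step per $\cdot$-multiplication, reduce from $B$ to $G$ via \cref{cor:E-id-1}, and conclude $B^{(n+1)}=E$, with the moreover-part handled identically via \cref{th:nilpotent-nilpotent} and \cref{rem:prop-leftnilp-nilp}. Your repackaging through \cref{cor:right-series} and the containments $G^{(k+1)}\subseteq\Soc_{n-k}(G)$ is just slightly cleaner bookkeeping of the paper's element-wise computation (and your explicit induction for $\Soc_n(B)\subseteq\Soc_n(G)$ spells out a step the paper merely asserts), but the underlying argument is the same.
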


\smallskip

\begin{que}
	Let $B$ be a nilpotent left semi-brace. Under which conditions is $B$ left nilpotent?
	Can the condition in \cref{th:nilp-leftrightnilp} be relaxed?
\end{que}

\smallskip

\begin{theor}
    Let $B$ be a left semi-brace such that $(G,+)$ is a finite nilpotent group. Then $B$ is a strongly nilpotent left semi-brace and $(B,\circ)$ is nilpotent if and only if $B$ is a nilpotent left semi-brace and $E$ is an ideal of $B$.  
\end{theor}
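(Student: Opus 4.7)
The plan is to derive the theorem by stitching together several results already established earlier in the paper, with the two directions corresponding to two short packages of citations plus one small computation.

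For the direction ``$B$ nilpotent and $E$ ideal $\Rightarrow$ $B$ strongly nilpotent and $(B,\circ)$ nilpotent'', I would first invoke \cref{th:nilpotent-nilpotent} to conclude that $(B,\circ)$ is nilpotent. Next, because $B$ is nilpotent and $E$ is an ideal, \cref{th:nilp-leftrightnilp} gives that $B$ is right nilpotent; and since the hypothesis $(G,+)$ finite nilpotent is in force, the same theorem also gives that $B$ is left nilpotent. Finally, having both $B^{(\beta)}=E$ and $B^{\gamma}=E$, \cref{theor:genseries} yields $B^{[\alpha]}=E$ for some $\alpha$, i.e., $B$ is strongly nilpotent.

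For the converse direction ``$B$ strongly nilpotent and $(B,\circ)$ nilpotent $\Rightarrow$ $B$ nilpotent and $E$ ideal'', strong nilpotency combined with \cref{theor:genseries} gives in particular that $B$ is right nilpotent, and a right nilpotent left semi-brace automatically has $E$ as an ideal. It remains to show that $\Ann_n(B)=B$ for some $n$. Because $(B,\circ)$ is nilpotent, there is an $m$ with $\zeta_m(B)=B$, hence $E\cap\zeta_m(B)=E$. For the group part, I would use that $B$ is right nilpotent with $(G,+)$ nilpotent, so by \cref{prop:nilpotent-type} the left semi-brace $B$ admits a $z$-series; by \cref{prop:z-series-ch} this is equivalent to $B=\Zoc_n(B)$ for some $n$, and by the identity $\Zoc_n(B)=\Soc_n(B)+E$ established in the right-series section this forces $G\subseteq\Soc_n(B)$.

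Taking $\ell:=\max(n,m)$ then gives
\[
\Ann_\ell(B)=\Soc_\ell(B)\cap\zeta_\ell(B)+E\cap\zeta_\ell(B)\supseteq G+E=B,
\]
so $B$ is nilpotent, completing the equivalence. The only real content beyond invoking prior theorems is this last observation that ``$B=\Zoc_n(B)$ plus $(B,\circ)$ nilpotent'' forces the annihilator series to exhaust $B$; there is no genuine obstacle, since all the heavy lifting (right/left nilpotency criteria, the $z$-series characterization, and the passage to the multiplicative group) has already been done in the preceding sections.
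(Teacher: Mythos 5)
Your proposal is correct and follows essentially the same route as the paper's own proof: the forward direction via \cref{th:nilpotent-nilpotent}, \cref{th:nilp-leftrightnilp} and \cref{theor:genseries}, and the converse via \cref{theor:genseries}, \cref{prop:nilpotent-type}, \cref{prop:z-series-ch} and the identity $\Zoc_n(B)=\Soc_n(B)+E$, ending with the same $\max$-of-indices computation for $\Ann_m(B)$. The only cosmetic difference is that you extract $G\subseteq\Soc_n(B)$ explicitly before intersecting with $\zeta_\ell(B)$, whereas the paper writes $B=\Soc_m(B)\cap\zeta_m(B)+E\cap\zeta_m(B)$ directly; these are the same argument.
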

\begin{proof}
If $B$ is a nilpotent left semi-brace and $E$ is an ideal of $B$, then by \cref{th:nilp-leftrightnilp}, $B$ is left and right nilpotent. By \cref{theor:genseries}, $B$ is strongly nilpotent. By \cref{th:nilpotent-nilpotent}, $(B,\circ)$ is nilpotent.

Conversely, suppose that $B$ is strongly nilpotent and $(B,\circ)$ is nilpotent. By \cref{theor:genseries}, $B$ is left and right nilpotent. In particular, $E$ is an ideal of $B$. By \cref{prop:nilpotent-type}, $B$ admits a $z$-series. By \cref{prop:z-series-ch}, there exists a positive integer $n$ such that $B=\Zoc_n(B)=\Soc_n(B)+E$. Since $(B,\circ)$ is nilpotent, there exists a positive integer $k$ such that $B=\zeta_k(B)$. Let $m=\max(n,k)$. Now we have
$$B=\Soc_m(B)+E=\Soc_m(B)\cap \zeta_m(B)+E\cap\zeta_m(B)=\Ann_m(B).$$
Hence $B$ is nilpotent and the result follows.
\end{proof}

\smallskip

\section*{Acknowledgments}
\noindent This work was partially supported by the Dipartimento di Matematica e Fisica ``Ennio De Giorgi" - Universit\`{a} del Salento. 
The first and the third author are members of GNSAGA (INdAM). The second author was partially supported by grants MINECO-FEDER MTM2017-83487-P and AGAUR 2017SGR1725 (Spain).

%--------------------------------------
%\section*{References}
%-------------------------------------
\smallskip

\bibliography{bibliography}

\end{document}